\newcommand{\N}{\mathbb{N}}
\newcommand{\Z}{\mathbb{Z}}
\newcommand{\Q}{\mathbb{Q}}
\newcommand{\R}{\mathbb{R}}
\newcommand{\T}{\mathbb{T}}
\newtheorem{defn}{Definition}[section]
\newtheorem{thm}[defn]{Theorem}
\newtheorem{ex}[defn]{Example}
\newtheorem{prop}[defn]{Proposition}
\newtheorem{cor}[defn]{Corollary}
\newtheorem{lemma}[defn]{Lemma}
\newtheorem{rem}[defn]{Remark}
\title{$\Z^{d}$-odometers and cohomology}
\author{Thierry Giordano\thanks{Supported in part by a
grant from NSERC, Canada}, \\
Department of Mathematics and Statistics, \\
University of Ottawa,\\
585 King Edward, Ottawa, Ontario, Canada K1N 6N5
 \and
 Ian F. Putnam\thanks{Supported in part by a
grant from NSERC, Canada},\\
Department of Mathematics and Statistics,\\
University of Victoria,\\
Victoria, B.C., Canada V8W 3R4
\and 
Christian F. Skau\thanks{Supported in
part by the Norwegian Research Council}, \\
Department of Mathematical Sciences, \\
Norwegian
University of Science and Technology (NTNU), \\
N-7034 Trondheim, Norway}
\begin{document}
\maketitle

\begin{abstract}
Cohomology for actions of  free abelian groups on the Cantor set has
(when endowed with an order structure) provided a complete 
invariance for orbit equivalence. In this paper, we study 
a particular class of actions of such groups called odometers 
(or profinite actions) and
investigate their cohomology. We show that for a free, minimal 
$\Z^{d}$-odometer, the first cohomology group provides a complete 
invariant for the action up to conjugacy. This is in contrast with
 the situation for orbit 
equivalence where it is the cohomology in dimension $d$ which provides
the invariant. We also consider classification up to isomorphism 
and continuous orbit equivalence.
\end{abstract}

\section{Introduction}

We recall some basic definitions from dynamical systems (see 
\cite{Gla}, \cite{Ku} or \cite{Wa}).
  
 We say that $(X, \varphi)$ is 
 an action of a group $G$ if $X$ is a topological space and, for every 
 $g$ in $G$, $\varphi^{g}: X \rightarrow X$ is a homeomorphism 
 satisfying the condition that,
 for all $g,h$ in $G$, we have
  $\varphi^{g} \circ \varphi^{h} = \varphi^{gh}$. 
  We will only consider groups $G$ which are countable.
 
An action $(X, \varphi)$ of $G$ is \emph{free},
 if whenever $g$ is in $G$ and
$x$ in $X$ satisfy $\varphi^{g}(x) = x$, then $g=e$.
 We say that the action
is \emph{minimal} if the only closed subsets $Y$ of $X$ such that 
$\varphi^{g}(Y) = Y$, for all $g$ in $G$, are the empty
 set and $X$.

  If 
   $(X_{1},  \varphi_{1})$ and 
    $(X_{2}, \varphi_{2})$ are  actions of the groups $G_{1}$
    and $G_{2}$, respectively, then  
 $(X_{1} \times X_{2}, \varphi_{1} \times \varphi_{2} )$ is an 
 action of $G_{1} \times G_{2}$.

We also recall that if $(X, \varphi)$ and $(Y, \psi)$ are
 actions of the group
$G$, a factor map $\pi: (X, \varphi) \rightarrow (Y, \psi)$ is 
a continuous surjection, $\pi: X \rightarrow Y$ such that
 $\pi \circ \varphi^{g} = \psi^{g} \circ \pi$, for every
  $g$ in $G$.

An interesting class of systems where the space 
$X$ is compact and totally disconnected arises if  the group is assumed to
be \emph{residually finite}: there is a decreasing sequence 
of subgroups $G \supseteq G_{1} \supseteq G_{2} \supseteq \cdots $, each having finite index , $[G: G_{n}] < \infty$ and whose intersection
is just the identity. Then the group $G$ acts in an obvious way
on each finite quotient 
 space $G/G_{n}$ and on the inverse limit 
 of the system
 \[
 G/G_{1} \leftarrow G/G_{2} \leftarrow \cdots 
 \]
  Such systems have a long and rather complex history, particularly
  in ergodic theory. We refer the reader to \cite{GNS} and 
  \cite{Nek} for a full discussion. In the topological category, we 
  refer the reader to \cite{CP} and \cite{Do}. Such systems often 
  appear under the category of profinite  completions, but we shall refer
  to them as $G$-odometers, simply because the terminology of 'odometer' is 
  standard for the $G=\Z$ case.

Our main interest here will be in  $\Z^{d}$-odometers, $ d \geq 1$.
We use $e_{1}, \ldots, e_{d}$ for the standard set of generators
 of $\Z^{d}$. We let $\varepsilon_{1}, \ldots, \varepsilon_{d}$ denote
  the dual basis; that is, these are group homomorphisms
   from $\Z^{d}$ to $\Z$. We use $< \cdot, \cdot>$ to 
   denote the usual inner product on $\R^{d}$.

\begin{defn}
\label{intro:5}
Let $(X, \varphi)$ be an action of the group 
$G$ and $(Y, \psi)$ be an action of the group $H$.
\begin{enumerate}
\item 
If $G = H$, 
a \emph{conjugacy}, $h : (X, \varphi) \rightarrow (Y, \psi)$, is 
a homeomorphism $h: X \rightarrow Y$ such that 
$h \circ \varphi^{g} = \psi^{g} \circ h$, for all $g$ 
in $G$. If such a map exists, we say that 
 $(X, \varphi)$ and $(Y, \psi)$ are \emph{conjugate}.
 \item 
 An \emph{isomorphism} between the actions is 
 a pair
$(h, \alpha)$, where $h: X \rightarrow Y$ is
a homeomorphism and $\alpha:G \rightarrow H$  is a group 
isomorphism such that 
$h \circ \varphi^{g} = \psi^{\alpha(g)} \circ h$, for all $g$ in $G$.
 If such a pair exists, we say that 
 $(X, \varphi)$ and $(Y, \psi)$ are \emph{isomorphic}.
 \end{enumerate}
\end{defn}

Of course, in the second definition, even when $G=H$, we allow
$\alpha$ to be non-trivial.

We will also be considering orbit equivalence between our systems
\cite{GPS1}.
The terminology is a reflection of the fact that in a
$G$-action, $(X, \varphi)$, for any point $x$ in $X$, the 
set $\{ \varphi^{g}(x) \mid g \in G \}$ is called the orbit of $x$ under 
$\varphi$.

\begin{defn}
\label{intro:10}
Let $(X, \varphi)$ be an action of the group $G$  and 
$(Y, \psi)$ be an action of the group $H$. We say they are 
\emph{orbit  equivalent} if there is a homeomorphism 
$h : X \rightarrow Y$ such   
\[
h \{ \varphi^{g}(x) \mid g \in G \} 
= \{ \psi^{g'}(h(x)) \mid g' \in H \},
\]
for all $x$ in $X$. The function $h$ is called an 
orbit equivalence.
\end{defn}

Note that in the definitions of conjugacy and 
isomorphism, the groups must be isomorphic, which 
is not the case here.

 Suppose that $(X, \varphi)$ and $(Y, \psi)$ are orbit
  equivalent and $h$ is a 
map as in the definition. If we assume that both actions are free,
 then there are unique functions
 $\alpha: X \times G \rightarrow H$
  and $\beta: Y \times H \rightarrow G$ such that 
  \[
  h(\varphi^{g}(x)) = \psi^{\alpha(x, g)}(h(x)),
  \]
  for all $x$ in $X$, $g$ in $G$ and 
  \[
  h^{-1}(\psi^{g'}(y)) = \varphi^{\beta(y,g')}(h^{-1}(y)),
  \]
  for all $y$ in $Y$ and $g' $ in $H$.
 These functions are usually called the \emph{orbit cocycles}
 associated with $h$. It is important to note that 
 despite the continuity properties of $\varphi, \psi$ and $h$, 
 these functions need not be continuous. 
 It is then a  natural notion to require some type of continuity.
 
 \begin{defn}\cite{Li}
 \label{intro: 15}
 If $(X, \varphi)$ is a free action of $G$ 
  and $(Y,  \psi)$ is a free action of $H$, we say they are 
 \emph{continuously orbit equivalent} if there is 
 an orbit equivalence, $h$, between them, whose cocycles, 
 $\alpha$ and $\beta$, are both continuous (with the usual
 topologies on $X, Y$, the discrete topologies on
  $G, H$ and the product topologies on 
   $X \times G$ and $Y \times H$).
 \end{defn}

It is also worth 
noting that for minimal actions of $\Z$ on the Cantor set,
 the somewhat peculiar 
 property that $\alpha( \cdot, 1)$ and $\beta( \cdot, 1)$ each 
 have at most one point of discontinuity is called \emph{strong} orbit 
 equivalence. (For more information, see \cite{GPS1}.)

 It is probably worth noting for the record the fairly obvious
 facts: 
 conjugacy implies isomorphism, which implies
 continuous orbit equivalence, which implies orbit equivalence.

 Our main results will be a classification of $\Z^{d}$-odometers
 up to conjugacy, isomorphism, orbit equivalence and 
 continuous orbit equivalence. In fact, some  results 
 along these lines have already 
 been obtained in \cite{CP}, \cite{CM} and \cite{Li}. 
 What is new in this paper is three features.
 The first is an alternate description
 of the construction. Instead of starting with a sequence
 \[
 \mathcal{G}: \Z^{d} = G_{1} \supseteq G_{2} \supseteq \cdots
 \]
 we start with a single group $\Z^{d} \subseteq H \subseteq \Q^{d}$.
 The passage between the two is obtained by duality, either 
 Pontryagin or in the sense of dual lattice.
More specifically, $H$ is obtained from 
$\mathcal{G}$ as $H = \cup_{n=1}^{\infty}  G_{n}^{*}$, where 
$G^{*}$ denotes the dual lattice of $G$, but we explore this
in more detail in the next section. Our associated $\Z^{d}$-odometer
is denoted by $(Y_{H}, \psi_{H})$.
There is a small benefit here in that we replace the data of a sequence, 
$\mathcal{G}$,
by a single object, namely $H$.

The second novelty is the computation of the first
cohomology  group, $H^{1}(Y_{H}, \psi_{H})$. Specifically, we show 
in Theorem \ref{odco:7} that this 
is isomorphic to $H$. Even better, 
using the fact that the odometer has a unique invariant 
probability measure $\mu$, we describe a natural map
$\tau^{1}_{\mu}: H^{1}( Y_{H}, \psi_{H}) \rightarrow \R^{d}$.
(It is worth noting that the map is coordinate dependent, 
however.) In Theorem \ref{odco:8}, we
 show that, for $d=1, 2$, the range is exactly $H$ and the 
map is an isomorphism.

The third novelty is that, building on the first two, we 
are able to give a quite simple classification of 
$\Z^{d}$-odometers up to conjugacy, 
isomorphism, continuous orbit equivalence
and orbit equivalence in terms of the group $H$.
 We summarize the results of  Corollaries
\ref{class:10} and \ref{class:16} and Theorem 
\ref{class:25} in the following theorem, Theorem \ref{intro_25}.
Before stating this, we need a bit of notation.
 It must be clear that 
that in cases of interest, $\Z^{d}$ has infinite index in $H$. 
Never-the-less, we find it useful to describe the 
index data in the following form.

\begin{defn}
\label{intro:27}
Let $ \Z^{d} \subseteq H \subseteq \Q^{d}$. We define
 the \emph{superindex} of $\Z^{d}$ in $H$ by 
\[
[[H: \Z^{d}]] = \{  [H': \Z^{d}] \mid 
\Z^{d} \subseteq H' \subseteq H, [H': \Z^{d}] < \infty \}.
\]
\end{defn}

Recall that  $GL_{d}(\Q)$ consists of the invertible $d \times d$ matrices
with rational entries, while
$GL_{d}(\Z)$ consists of the $d \times d$ matrices with integer entries and
determinant $\pm 1$. Also recall that $SL_{d}(\Q)$ and $SL_{d}(\Z)$
consist of the respective subgroups with matrices of determinant one.

\begin{thm}
\label{intro_25}
Let $\Z^{d} \subseteq H \subseteq \Q^{d}$  and 
 $\Z^{d'} \subseteq H' \subseteq \Q^{d'}$ be two groups and assume 
each is dense in $\Q^{d}$ and $\Q^{d'}$, respectively.
\begin{enumerate}
\item If $d = 1, 2$, then the $\Z^{d}$-actions 
$(Y_{H}, \psi_{H})$ and 
$(Y_{H'}, \psi_{H'})$ are conjugate if and only if 
$H=H'$.
\item If $d, d' = 1, 2$, then the $\Z^{d}$-action 
$(Y_{H}, \psi_{H})$ and the $\Z^{d'}$-action
$(Y_{H'}, \psi_{H'})$ are isomorphic  if and only if  $d=d'$ and
there is $\alpha$ in $GL_{d}( \Z)$ such that $\alpha H=H'$.
\item 
If $d, d' = 1, 2$, then the $\Z^{d}$-action 
$(Y_{H}, \psi_{H})$ and the $\Z^{d'}$-action
$(Y_{H'}, \psi_{H'})$ are continuously orbit equivalent
  if and only if  $d=d'$ and
there is $\alpha$ in $GL_{d}( \Q)$ with 
$\det(\alpha) = \pm 1$ such that $\alpha H=H'$.
\item 
The $\Z^{d}$-action 
$(Y_{H}, \psi_{H})$ and the $\Z^{d'}$-action
$(Y_{H'}, \psi_{H'})$ are  orbit equivalent
  if and only if $[[H:\Z^{d}]] = [[H':\Z^{d'}]]$.
\end{enumerate}
\end{thm}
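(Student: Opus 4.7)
The overall strategy is to use cohomological invariants and track them through each equivalence notion. For parts (1), (2), (3) I would rely on the first cohomology and the identification $\tau^1_\mu : H^1(Y_H, \psi_H) \cong H \subseteq \R^d$ supplied by Theorem \ref{odco:8}; for (4) I would instead use the top-dimensional cohomology $H^d$ together with the Giordano--Matui--Putnam--Skau classification of orbit equivalence of free minimal $\Z^d$-Cantor systems. In every case the argument is to check that the equivalence under consideration induces an (appropriately enhanced) isomorphism of these cohomological invariants and then to read off the algebraic data on $H$.

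For (1), a conjugacy $h : (Y_H, \psi_H) \to (Y_{H'}, \psi_{H'})$ must push the unique invariant probability measure $\mu$ to $\mu'$, so the pullback $h^{*}$ on $H^1$ fits into a commutative square with $\tau^1_\mu$ and $\tau^1_{\mu'}$; since both actions use the standard generators $e_1, \ldots, e_d$, this square forces the images $H, H' \subseteq \R^d$ to coincide. For (2), the same template applies but an isomorphism $(h, \alpha)$ carries an automorphism $\alpha \in GL_d(\Z)$, which reads through $\tau^1_\mu$ as the linear $\R^d$-action of $\alpha$ sending $H$ to $H'$. For (3), I would invoke Li's framework: for each $g \in \Z^d$, the class of the continuous cocycle $\alpha(\cdot, g)$ in $H^1(Y_{H'}, \psi_{H'})$ evaluates under $\tau^1_{\mu'}$ to an element of $H' \subseteq \R^{d'}$, yielding a homomorphism $\Z^d \to H'$ that extends $\Q$-linearly to an element of $GL_d(\Q)$; preservation of the unique invariant probability measure under pushforward then pins $|\det \alpha|$ to $1$. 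The converses in all three parts are constructive: from the appropriate $\alpha$ one builds the conjugacy, isomorphism, or continuous orbit equivalence directly from the inverse-limit descriptions of $Y_H$ and $Y_{H'}$.

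For (4), one computes $H^d(Y_H, \psi_H)$ for the odometer $Y_H$ associated to $\Z^d = G_1 \supseteq G_2 \supseteq \cdots$ as an inductive limit of copies of $\Z$ whose connecting maps are multiplication by the indices $[G_n : G_{n+1}]$, endowed with the positive cone inherited from $\mu$ and the unit class of the constant function $1$. The resulting pointed dimension group remembers precisely which integers appear as indices, which is exactly the superindex $[[H : \Z^d]]$; invoking the GMPS theorem then reduces orbit equivalence to equality of superindices. I expect the main obstacle to lie in part (3): parts (1), (2), and (4) reduce to clean functoriality of cohomology under equivariant or inverse-limit-compatible maps, whereas continuous orbit equivalence is only cocycle-equivariant, and verifying both that $\tau^1_\mu$ descends through this weakened naturality and that the averaged cocycle assembles into a well-defined unimodular rational matrix requires more delicate bookkeeping.
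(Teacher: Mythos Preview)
Your strategy for parts (1), (2), and (4) matches the paper's: Corollary~\ref{class:10} deduces (1) and (2) directly from the identification $\tau^1_\mu: H^1(Y_H,\psi_H)\cong H$ of Theorem~\ref{odco:8} together with Proposition~\ref{odom:25}, and Corollary~\ref{class:16} deduces (4) from Theorem~\ref{class:14} (the computation of $D(Y_H,\psi_H)$ in terms of the superindex) and the GMPS theorem, exactly as you outline.

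Part (3) is where you diverge from the paper, and where there is a genuine gap. Your plan is purely cohomological: push the orbit cocycle $\alpha$ through $\tau^1_\mu$ to obtain a rational matrix. (Minor point: $\alpha(\cdot,g)$ is a function on $Y_H$, not a cocycle on $(Y_{H'},\psi_{H'})$; what you want is that each coordinate $\alpha_j:Y_H\times\Z^d\to\Z$ is a $1$-cocycle on $(Y_H,\psi_H)$, and $h^*[\varepsilon_j]=[\alpha_j]$ under the groupoid-cohomology isomorphism.) This does yield a group isomorphism $H'\to H$, which extends $\Q$-linearly to some $A\in GL_d(\Q)$. The gap is your claim that ``preservation of the unique invariant probability measure under pushforward then pins $|\det A|$ to $1$.'' Measure preservation is automatic here (unique ergodicity) and does not by itself control $|\det A|$; indeed, for $H=\Z[1/2]\oplus\Z$ the matrix $\mathrm{diag}(2,1)$ maps $H$ to itself with determinant $2$. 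You would need an additional argument---for instance a Rokhlin-tower density count showing that $\alpha$ is bijective on large boxes, or a cup-product/$H^d$ argument---and you have not supplied one.

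The paper's proof of Theorem~\ref{class:25} avoids this issue by a geometric reduction rather than a cohomological one. Using continuity of $\alpha(\cdot,e_i)$, one finds a level $K=H_n$ at which the cocycle is constant on the clopen partition of Proposition~\ref{class:20}; the orbit equivalence then restricts to an \emph{isomorphism} between the return systems $(Z,\psi_H,K^*)$ and $(Z',\psi_{H'},(K')^*)$. Now $|\det|=1$ falls out of elementary index counting: $h$ carries the partition $\{\psi_H^{a_i}(Z)\}$ bijectively to the partition of $Y_{H'}$, so $[\Z^d:K^*]=[\Z^d:(K')^*]$, and part~3 of Proposition~\ref{class:20} together with Corollary~\ref{class:10} finishes. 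The same proposition is used to build the explicit continuous orbit equivalence in the converse direction, which your ``constructive from inverse limits'' gestures at but does not spell out. In short, the paper trades your cohomological bookkeeping for a Kakutani-tower picture; the payoff is that the determinant condition becomes a finite combinatorial equality rather than an analytic fact about averaged cocycles.
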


We note in Corollary \ref{class:30} that
 in the case $d=d'=1$, the four conditions
are  all equivalent. In Example
\ref{class:35}, we also give examples to show that, aside from the 
obvious implications we noted earlier, the four conditions are distinct, 
although all of these were provided initially 
by others.

\section{$\Z^{d}$-odometers}

Let $d$ be a positive integer. We say that 
$\mathcal{G} = \{ G_{1}, G_{2}, \ldots \}$ is a 
decreasing sequence of finite-index subgroups of $\Z^{d}$ if
we have \begin{enumerate}
\item $\Z^{d} = G_{1} \supseteq G_{2} \supseteq \cdots$,
\item 
$[ \Z^{d}: G_{n}] < \infty$, for all $n \geq 1$.
\end{enumerate}

If $G$ is any  subgroup of $\Z^{d}$, 
we let 
$\varphi_{G}$ denote the $\Z^{d}$-action on 
$\Z^{d}/G$ given by
\[
\varphi_{G}(k)( l + G) = k+l+ G,  \text{ for all } k,l \in \Z^{d}.
\]
Given the decreasing sequence above,
 the obvious quotient map \newline 
 $q_{n}: \Z^{d}/G_{n+1} \rightarrow \Z^{d}/G_{n}$ is then a factor map and we define
$(X_{\mathcal{G}}, \varphi_{\mathcal{G}})$ to be 
the inverse limit of the systems
\[
(\Z^{d}/G_{1},\varphi_{G_{1}}) \stackrel{q_{1}}{\longleftarrow} 
(\Z^{d}/G_{2},\varphi_{G_{2}}) \stackrel{q_{2}}{\longleftarrow} 
\cdots 
\]
The natural map from 
$(X_{\mathcal{G}}, \varphi_{\mathcal{G}})$ 
to $(\Z^{d}/G_{n},\varphi_{G_{n}})$
is denoted $\pi_{n}$, for $n \geq 1$.

\begin{defn}
\label{odom:5}\cite{Co}
A $\Z^{d}$-odometer is any system
 $(X_{\mathcal{G}}, \varphi_{\mathcal{G}})$, where $\mathcal{G}$  is 
 a decreasing sequence of finite-index 
 subgroups of $\Z^{d}$.
\end{defn}

The proof of the following is direct and we leave it to the reader.

\begin{thm}
\label{odom:8}
Let $\mathcal{G}$ be  a decreasing
 sequence of finite-index subgroups of $\Z^{d}$.
\begin{enumerate}
\item 
If $G_{n} \neq G_{n+1}$ for infinitely many $n$, then $ X_{\mathcal{G}}$ 
is a Cantor set.
\item 
$(X_{\mathcal{G}}, \varphi_{\mathcal{G}})$
is minimal.
\item The action is free if and only if 
$\cap_{n=1}^{\infty} G_{n } = \{ 0 \}$. 
\item
There is a unique 
$\varphi_{\mathcal{G}}$-invariant 
probability measure $\mu_{\mathcal{G}}$ on 
$X_{\mathcal{G}}$ which satisfies
\[
\mu_{\mathcal{G}}(\pi_{n}^{-1}\{ k + G_{n}\})
 = [\Z^{d}: G_{n}]^{-1},
 \]
 for all $n \geq 1$, $k$ in $\Z^{d}$.
 \item The formula
 \[
 d_{\mathcal{G}}(x, y) = \sup\{0, n^{-1} \mid n \geq 1,
 \pi_{n}(x) \neq \pi_{n}(y) \}
 \]
 for $x, y$ in $X_{\mathcal{G}}$ defines a metric in which 
 $\varphi_{\mathcal{G}}$ is isometric.
 \end{enumerate}
\end{thm}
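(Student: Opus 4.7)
The plan is to handle the five parts by systematically exploiting two facts: the $\Z^d$-action on each finite quotient $\Z^d/G_n$ is transitive, and the projections $\pi_n$ intertwine $\varphi_{\mathcal{G}}$ with $\varphi_{G_n}$. I would begin with parts (2) and (3). If $Y\subseteq X_{\mathcal{G}}$ is a nonempty closed invariant set, then $\pi_n(Y)$ is a nonempty $\varphi_{G_n}$-invariant subset of $\Z^d/G_n$, and transitivity forces $\pi_n(Y)=\Z^d/G_n$. Since the cylinders $\pi_n^{-1}\{k+G_n\}$ form a basis for the topology, $Y$ meets every basic open set and hence equals $X_{\mathcal{G}}$, giving minimality. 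For freeness, $\varphi_{\mathcal{G}}^k(x)=x$ forces $k+\pi_n(x)=\pi_n(x)$ in $\Z^d/G_n$ for every $n$, so the stabilizer of any $x$ equals $\bigcap_n G_n$.

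For part (5) I would verify directly that $d_{\mathcal{G}}$ is an ultrametric: $\pi_n(x)\neq\pi_n(y)$ implies $\pi_m(x)\neq\pi_m(y)$ for $m\leq n$ since the $q_n$'s are equivariant surjections, and the closed $n^{-1}$-balls are exactly the cylinders $\pi_n^{-1}\{k+G_n\}$, so the induced topology is the inverse-limit topology. The isometry property then follows from $\pi_n\circ\varphi_{\mathcal{G}}^k=\varphi_{G_n}^k\circ\pi_n$ together with the bijectivity of $\varphi_{G_n}^k$ on $\Z^d/G_n$. With metrizability in hand, part (1) reduces to showing $X_{\mathcal{G}}$ has no isolated point: if $G_n\neq G_{n+1}$ for infinitely many $n$, then for any $x$ and each such $n$ the cylinder $\pi_n^{-1}\{\pi_n(x)\}$ properly contains $\pi_{n+1}^{-1}\{\pi_{n+1}(x)\}$, so arbitrarily small neighbourhoods of $x$ contain other points; compact, totally disconnected, metrizable, and perfect together give Cantor.

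Part (4) is the one I expect to require slightly more care. Any invariant Borel probability measure pushes forward under $\pi_n$ to a $\varphi_{G_n}$-invariant probability measure on $\Z^d/G_n$, and transitivity forces this pushforward to be the normalized counting measure. This yields the stated formula on cylinders and pins $\mu_{\mathcal{G}}$ down on a basis generating the Borel $\sigma$-algebra, hence uniqueness. For existence, the normalized counting measures $\nu_n$ on $\Z^d/G_n$ are compatible with the surjections $q_n$ (each fibre of $q_n$ has cardinality $[G_n:G_{n+1}]$, so $(q_n)_*\nu_{n+1}=\nu_n$), and the Kolmogorov extension theorem produces $\mu_{\mathcal{G}}$. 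The main obstacle is simply being careful about existence versus uniqueness, but both follow once one isolates the fact that transitivity makes the normalized counting measure the unique invariant probability measure on $\Z^d/G_n$.
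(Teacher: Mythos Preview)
Your approach is correct and is essentially the standard direct verification; the paper in fact gives no proof at all, stating only that ``the proof of the following is direct and we leave it to the reader.'' One small slip: in part~(5) the implication runs the other way---since $\pi_{n}=q_{n}\circ\pi_{n+1}$, the condition $\pi_{n}(x)\neq\pi_{n}(y)$ forces $\pi_{m}(x)\neq\pi_{m}(y)$ for $m\geq n$, not $m\leq n$---but this does not affect the rest of your argument.
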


Our aim here is to present $\Z^{d}$-odometers in a 
slightly different fashion, although the difference
is rather cosmetic. A crucial feature is that the group 
$\Z^{d}$ is abelian, as we will use Pontryagin duality 
in an essential way.

 We begin with 
a group 
 $\Z^{d} \subseteq H \subseteq \Q^{d}$.
 It follows that 
 \[
 H/ \Z^{d} \subseteq \Q^{d} / \Z^{d} \subseteq \R^{d} / \Z^{d} \cong \T^{d}.
 \]
 We let $\rho$ denote the inclusion map 
 of $H/Z^{d}$ in 
 $\T^{d}$; that is,
 \[
 \rho ( (r_{1}, \ldots, r_{d}) + \Z^{d}) = 
( e^{2 \pi  i r_{1}}, \ldots, e^{2 \pi i r_{d}}), r \in H.
\]
For any locally compact abelian group $K$, we let 
$\widehat{K}$ denote its Pontryagin dual \cite{HR}. We let
 $Y_{H} = \widehat{ H / \Z^{d}}$. The groups $H$ and $H / \Z^{d}$
  are given
 the discrete topology so that $Y_{H}$ is compact. Since 
 $H \subseteq \Q^{d}$, the quotient $H / \Z^{d}$ 
 is torsion, so $Y_{H}$ is totally disconnected.

We suppress the natural isomorphism 
$\widehat{\T^{d}} \cong \Z^{d}$ and consider
\[
\widehat{\rho}: \Z^{d} \rightarrow \widehat{ H / \Z^{d}}.
\]
We then obtain an action of $\Z^{d}$ on $Y_{H}$, which 
we denote by $\psi_{H}$, by 
\[
\psi_{H}^{n}(x) = x + \widehat{\rho}(n), n \in \Z^{d}, x \in Y_{H}.
\]
More specifically, if $x: H/\Z^{d} \rightarrow \T$ is a 
group homomorphism, then 
\[
\psi_{H}^{n}(x)(h + \Z^{d}) = x(h) e^{2 \pi i <h,n>},
h \in H, n \in \Z^{d}.
\]

If we rewrite this, using $<\!< \cdot, \cdot > \!>$ to denote 
the pairing between $H/\Z^{d}$ and its dual, then we have 
\[
<\!< h + \Z^{d}, \psi^{n}_{H}(x) > \!> = 
e^{2 \pi <h, n>} <\!< h + \Z^{d}, x > \!>, 
\]
for every $h$ in $H$, $n $ in $\Z^{d}$ and $x$ in $Y_{H}$.
In other words, the function \newline 
$<\!< h + \Z^{d}, \cdot > \!>$ 
is a continuous eigenfunction for the action with eigenvalue
$(e^{2 \pi h_{1}}, \ldots,  e^{2 \pi h_{d}})$. 
In particular, the spectrum \cite{Wa} of the 
action is the set $\{ (e^{2 \pi h_{1}}, \ldots,  e^{2 \pi h_{d}}) 
\mid h \in H \}$.

 We make a few simple observations on this construction.
 
 \begin{prop}
 \label{odom:11}
 \begin{enumerate}
 \item  If $\Z^{d} \subseteq H \subseteq \Q^{d}$, then 
 the action $(Y_{H}, \psi_{H})$ is free if and only 
 if $H$ is dense in $\Q^{d}$.
  \item If $\Z^{d} \subseteq H \subseteq \Q^{d}$, then 
  $\# Y_{H} = [H: \Z^{d}]$.
  \item If $\Z^{d} \subseteq H \subseteq H' \subseteq \Q^{d}$, then 
 there is a natural factor map from $(Y_{H'}, \psi_{H'})$ to 
  $(Y_{H}, \psi_{H})$.
  \item If  $\Z^{d} \subseteq H_{1} \subseteq H_{2}  \subseteq \cdots \Q^{d}$
 then the inverse limit of 
 \[
  \lim (Y_{H_{1}}, \psi_{H_{1}})  
\stackrel{\widehat{i_{1}}}{\longleftarrow}
 (Y_{H_{2}}, \psi_{H_{2}}) 
 \stackrel{\widehat{i_{2}}}{\longleftarrow} \cdots
 \]
 is conjugate to $(Y_{H}, \psi_{H})$, where 
 $H = \cup_{n} H_{n}$.
 \end{enumerate}
 \end{prop}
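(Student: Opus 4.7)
The plan is to derive all four parts from the Pontryagin duality dictionary between the discrete torsion group $H/\Z^{d}$ and the compact totally disconnected group $Y_{H} = \widehat{H/\Z^{d}}$, combined with the explicit formula $\psi_{H}^{n}(x) = x + \widehat{\rho}(n)$ already in place. Each item corresponds to a standard duality statement, and the only step of substance is (1), which invokes the fact that a continuous homomorphism of locally compact abelian groups has injective dual if and only if its image is dense.

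For (1), since $\psi_{H}^{n}$ is translation on the topological group $Y_{H}$, the equation $\psi_{H}^{n}(x) = x$ has \emph{any} solution if and only if $\widehat{\rho}(n) = 0$, in which case $\psi_{H}^{n}$ is the identity on all of $Y_{H}$. Hence freeness is equivalent to injectivity of $\widehat{\rho}: \Z^{d} \to Y_{H}$, which by duality is equivalent to $\rho(H/\Z^{d})$ being dense in $\T^{d}$; since $\rho(H/\Z^{d})$ is the image of $H$ in $\R^{d}/\Z^{d}$, this in turn is equivalent to $H$ being dense in $\R^{d}$, i.e.\ in $\Q^{d}$. For (2), $H/\Z^{d}$ is torsion; when $[H:\Z^{d}] < \infty$ it is a finite abelian group, and any such group has the same cardinality as its Pontryagin dual, while if $[H:\Z^{d}] = \infty$ both sides are infinite.

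For (3), the inclusion $i: H/\Z^{d} \hookrightarrow H'/\Z^{d}$ dualizes to a continuous surjection $\widehat{i}: Y_{H'} \to Y_{H}$, and from $\rho = \rho' \circ i$ (where $\rho'$ is the analogous map for $H'$) one gets $\widehat{\rho} = \widehat{i} \circ \widehat{\rho'}$, which is precisely the intertwining relation $\widehat{i} \circ \psi_{H'}^{n} = \psi_{H}^{n} \circ \widehat{i}$ needed for $\widehat{i}$ to be a factor map. For (4), Pontryagin duality converts the direct limit $H/\Z^{d} = \bigcup_{n} H_{n}/\Z^{d}$ of discrete abelian groups into an inverse limit of compact abelian groups, yielding $Y_{H} \cong \varprojlim Y_{H_{n}}$; compatibility of the $\Z^{d}$-actions on the limit with the actions $\psi_{H_{n}}$ follows from the intertwining established in (3) applied at each level. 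The only genuinely non-formal ingredient in the whole proposition is the density-injectivity duality used in (1); granted that, the remaining three items reduce to straightforward bookkeeping within the dual pairing.
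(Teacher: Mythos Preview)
Your proposal is correct. The paper does not actually supply a proof for this proposition; it introduces the four statements as ``a few simple observations on this construction'' and leaves the verification to the reader. Your argument via the Pontryagin duality dictionary is precisely the kind of routine check the paper is implicitly invoking, and each of your four steps is sound: the equivalence in (1) between injectivity of $\widehat{\rho}$ and density of the image of $\rho$ is the standard duality fact, and parts (2)--(4) are, as you say, bookkeeping.
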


 To  analyze the system, 
 $(Y_{H}, \psi_{H})$, 
  we say that a sequence of subgroups,
$ H_{1}, H_{2}, \ldots $ 
is an increasing sequence of finite-index 
extensions of $\Z^{d}$ 
if \begin{enumerate}
\item $\Z^{d} = H_{1} \subseteq H_{2} \subseteq \cdots$,
\item 
$[ H_{n}: \Z^{d}] < \infty$, for all $n \geq 1$.
\end{enumerate}
Observe that, for any group $\Z^{d} \subseteq H \subseteq \Q^{d}$, 
there exists such a sequence with union $H$ by simply taking 
$H_{n} = (\frac{1}{n!} \Z )^{d} \cap H, n \geq 1$. From this point, the 
last proposition gives us a rather complete description of the 
action $(Y_{H}, \psi_{H})$.

The link between the two constructions
 which we have described,
$  \Z^{d} = G_{1} \supseteq G_{2} \supseteq \cdots$ and 
$ \Z^{d} = H_{1}  \subseteq H_{2} \subseteq \cdots$ is given 
by duality. Recall that a subgroup $K \subseteq \R^{d}$ is a 
\emph{lattice} if it is discrete and $\R^{d}/K$ is compact. 
In particular, this
holds if $\Z^{d} $ is a finite index subgroup
of $K$. The dual lattice $K^{*}$ is defined
by 
\[
K^{*} = \{ g \in \R^{d} \mid < k, g> \in \Z, \text{ for all }
k \in K \}.
\]
It is a simple matter to check that if $H \subseteq \Q^{d}$
 is a lattice, then
$H^{*} \subseteq \Q^{d}$ also.

\begin{lemma}
\label{odom:14}
Let $\Z^{d} \subseteq K \subseteq \Q^{d}$ be a subgroup
 with $[K : \Z^{d} ] < \infty$. Then 
 there is a conjugacy 
 \[
 h_{K}: (Y_{K}, \psi_{K}) \rightarrow (\Z^{d}/K^{*}, \varphi_{K^{*}}).
 \]

 Moreover, if $\Z^{d} \subseteq K_{1} \subseteq K_{2} \subseteq \Q^{d}$ 
 are subgroups with 
 $[K_{2} : \Z^{d} ] < \infty$,  let $i$ denote 
 the inclusion of $K_{1}/ \Z^{d}$ in $K_{2}/ \Z^{d}$. 
 It is clear that $K_{1}^{*} \supseteq K_{2}^{*}$ and we let 
 $q$ denote the natural quotient map from
 $\Z^{d}/ K_{2}^{*}$ to  $\Z^{d}/ K_{1}^{*}$ . The following
  diagram commutes:
 
 \hspace{2cm}
 \xymatrix{
 (Y_{K_{2}}, \psi_{K_{2}}) \ar[r]^{\widehat{i}}
   \ar[d]_{h_{K_{2}}} &
  (Y_{K_{1}}, \psi_{K_{1}})    \ar[d]_{h_{K_{1}}} \\
  (\Z^{d}/K_{2}^{*}, \varphi_{K_{2}^{*}}) 
  \ar[r]^{q}  &
    (\Z^{d}/K_{1}^{*}, \varphi_{K_{1}^{*}}) } 
\end{lemma}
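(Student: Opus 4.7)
\medskip

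\noindent\textbf{Proof plan.} The idea is that the conjugacy is essentially the finite-dimensional shadow of Pontryagin duality: the Pontryagin dual of the finite abelian group $K/\Z^{d}$ is canonically $\Z^{d}/K^{*}$, and under this identification the action $\psi_{K}$ becomes translation.  First I would define a map $j_{K}:\Z^{d}\to Y_{K}$ by the formula
\[
 j_{K}(n)(k+\Z^{d}) \;=\; e^{2\pi i \langle k,n\rangle},\qquad n\in \Z^{d},\; k\in K.
\]
Since $\langle k,n\rangle\in\Z$ when $k\in\Z^{d}$, the character $j_{K}(n)$ is well defined on $K/\Z^{d}$, and $j_{K}$ is a group homomorphism into $Y_{K}=\widehat{K/\Z^{d}}$.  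By definition of the dual lattice, $j_{K}(n)$ is trivial precisely when $\langle k,n\rangle\in \Z$ for every $k\in K$, i.e.\ when $n\in K^{*}$.  Hence $j_{K}$ descends to an injective homomorphism $\bar j_{K}:\Z^{d}/K^{*}\to Y_{K}$.

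\medskip

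\noindent Next I would argue surjectivity by counting.  By part (2) of Proposition~\ref{odom:11}, $\# Y_{K}=[K:\Z^{d}]$; on the other hand, it is a standard fact about dual lattices that $[K:\Z^{d}]=[\Z^{d}:K^{*}]$ (both equal the absolute value of the determinant of any integer matrix expressing a $\Z$-basis of $\Z^{d}$ in terms of one of $K$).  Thus $\bar j_{K}$ is a bijection, and I let $h_{K}$ be its inverse.  Equivariance is then a direct unwinding of definitions: if $h_{K}(x)=m+K^{*}$, so that $x(k+\Z^{d})=e^{2\pi i\langle k,m\rangle}$, then the defining formula for $\psi_{K}^{n}$ gives
\[
 \psi_{K}^{n}(x)(k+\Z^{d}) \;=\; e^{2\pi i\langle k,m\rangle}\,e^{2\pi i\langle k,n\rangle}\;=\; j_{K}(m+n)(k+\Z^{d}),
\]
so $h_{K}(\psi_{K}^{n}(x))=(m+n)+K^{*}=\varphi_{K^{*}}^{\,n}(h_{K}(x))$, as required.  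Because $Y_{K}$ and $\Z^{d}/K^{*}$ are finite and discrete, $h_{K}$ is automatically a homeomorphism.

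\medskip

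\noindent Finally, for the commutative square, I would chase an element $x\in Y_{K_{2}}$.  Write $h_{K_{2}}(x)=m+K_{2}^{*}$, so $x(k+\Z^{d})=e^{2\pi i\langle k,m\rangle}$ for all $k\in K_{2}$.  The map $\widehat{i}$ is simply restriction along $i:K_{1}/\Z^{d}\hookrightarrow K_{2}/\Z^{d}$, so $\widehat{i}(x)$ is the character $k+\Z^{d}\mapsto e^{2\pi i\langle k,m\rangle}$ on $K_{1}/\Z^{d}$, which by definition equals $j_{K_{1}}(m)$.  Hence $h_{K_{1}}(\widehat{i}(x))=m+K_{1}^{*}=q(m+K_{2}^{*})=q(h_{K_{2}}(x))$, and the diagram commutes.

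\medskip

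\noindent The argument is essentially a bookkeeping exercise; the only non-formal ingredient is the index equality $[K:\Z^{d}]=[\Z^{d}:K^{*}]$, which is the sole point where I would need to invoke the classical theory of dual lattices rather than pure definition-chasing.
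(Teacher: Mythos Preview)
Your proof is correct, and the paper does not actually supply a proof of this lemma---it states the result and moves directly to Theorem~\ref{odom:17}---so your explicit construction fills in precisely what the paper leaves to the reader. Your map $j_{K}$ is nothing other than the paper's $\widehat{\rho}$ (specialized to $H=K$), and the route you take---compute the kernel to get injectivity, count cardinalities via $[K:\Z^{d}]=[\Z^{d}:K^{*}]$ to get surjectivity, then verify equivariance and naturality by direct substitution---is the natural argument the paper presumably has in mind.
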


The obvious immediate consequence is the following.

\begin{thm}
\label{odom:17}
Let $H$ be a group with
$\Z^{d} \subseteq H \subset \Q^{d}$.
 If $H_{1}, H_{2}, \ldots$ is any increasing  sequence 
of finite-index extensions of 
$\Z^{d}$ with union $H$, then 
$\mathcal{G} = \{ H_{1}^{*}, H_{2}^{*}, \ldots \}$ is a 
decreasing sequence of finite-index subgroups of
$\Z^{d}$ and the  $\Z^{d}$-systems
$(Y_{H}, \psi_{H})$ and 
$(X_{\mathcal{G}}, \varphi_{\mathcal{G}})$ are conjugate.
\end{thm}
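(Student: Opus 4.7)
The plan is to assemble the theorem from the two ingredients already at hand: Proposition \ref{odom:11}(4), which realizes $(Y_H,\psi_H)$ as an inverse limit of the finite systems $(Y_{H_n},\psi_{H_n})$, and Lemma \ref{odom:14}, which identifies each finite piece $(Y_{H_n},\psi_{H_n})$ with $(\Z^d/H_n^\ast,\varphi_{H_n^\ast})$ in a way compatible with the connecting maps. Putting the two together will give a conjugacy of the two inverse limits.

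First I would verify that $\mathcal{G}=\{H_n^\ast\}$ really is a decreasing sequence of finite-index subgroups of $\Z^d$. The containment $H_n\subseteq H_{n+1}$ reverses under the duality $K\mapsto K^\ast$, so $H_n^\ast\supseteq H_{n+1}^\ast$; since $\Z^d$ is self-dual with respect to the standard inner product, we have $H_1^\ast=(\Z^d)^\ast=\Z^d$, and a standard lattice computation (or the observation that $H_n/\Z^d$ and $\Z^d/H_n^\ast$ are Pontryagin dual finite groups) gives $[\Z^d:H_n^\ast]=[H_n:\Z^d]<\infty$. This also shows $\bigcap_n H_n^\ast=\{0\}$ is automatic from $H$ being a subgroup of $\Q^d$, although we do not need it here.

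Next I would appeal to Proposition \ref{odom:11}(4) to present $(Y_H,\psi_H)$ as the inverse limit
\[
(Y_{H_1},\psi_{H_1})\stackrel{\widehat{i_1}}{\longleftarrow}(Y_{H_2},\psi_{H_2})\stackrel{\widehat{i_2}}{\longleftarrow}\cdots,
\]
and simultaneously write $(X_{\mathcal{G}},\varphi_{\mathcal{G}})$ as the inverse limit
\[
(\Z^d/H_1^\ast,\varphi_{H_1^\ast})\stackrel{q_1}{\longleftarrow}(\Z^d/H_2^\ast,\varphi_{H_2^\ast})\stackrel{q_2}{\longleftarrow}\cdots
\]
by definition. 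Lemma \ref{odom:14} furnishes conjugacies $h_{H_n}:(Y_{H_n},\psi_{H_n})\to(\Z^d/H_n^\ast,\varphi_{H_n^\ast})$ for every $n$, and the commutative square in that lemma (applied to $K_1=H_n$, $K_2=H_{n+1}$) says exactly that these conjugacies intertwine the connecting maps $\widehat{i_n}$ and $q_n$.

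Finally I would invoke the universal property of inverse limits: a level-wise conjugacy of inverse systems that commutes with the bonding maps induces a conjugacy of the inverse limits. Thus the sequence $(h_{H_n})_{n\geq 1}$ assembles into a homeomorphism $h:Y_H\to X_{\mathcal{G}}$ satisfying $\pi_n\circ h=h_{H_n}\circ\widehat{\rho}_n$, where $\widehat{\rho}_n:Y_H\to Y_{H_n}$ is the map from Proposition \ref{odom:11}(3); since each $h_{H_n}$ intertwines $\psi_{H_n}$ and $\varphi_{H_n^\ast}$, the limiting map intertwines $\psi_H$ and $\varphi_{\mathcal{G}}$. There is no serious obstacle — the only mild care required is to check that the index data behaves correctly under dualization, and that really is a one-line consequence of the Pontryagin duality between the finite groups $H_n/\Z^d$ and $\Z^d/H_n^\ast$.
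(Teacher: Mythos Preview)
Your proposal is correct and matches exactly what the paper intends: the theorem is stated there as ``the obvious immediate consequence'' of Lemma \ref{odom:14} (implicitly together with Proposition \ref{odom:11}(4)), with no further proof given, and you have simply spelled out that consequence. One small slip in your parenthetical: $\bigcap_n H_n^\ast=\{0\}$ requires $H$ to be dense in $\Q^d$, not merely $H\subseteq\Q^d$ (consider $H=\Z^d$), though as you rightly say this plays no role in the argument.
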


We also note the following, which establishes that the 
correspondence is actually bijective.

\begin{thm}
\label{odom:20}
Let $\mathcal{G}   = \{ G_{1}, G_{2}, \ldots \}$ be
 a 
decreasing sequence of finite-index subgroups of
$\Z^{d}$. Then the group
 \[
 H = \cup_{n = 1} ^{\infty} G_{n}^{*}
 \]
 satisfies  $\Z^{d} \subseteq H \subseteq \Q^{d}$.
  Moreover, 
  the  $\Z^{d}$-systems
$(Y_{H}, \psi_{H})$ and 
$(X_{\mathcal{G}}, \varphi_{\mathcal{G}})$ are conjugate. 
\end{thm}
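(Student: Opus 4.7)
The plan is to reduce Theorem \ref{odom:20} directly to Theorem \ref{odom:17} by exhibiting the right increasing sequence on the ``dual'' side and invoking the biduality $G^{**} = G$ for lattices.

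First I would verify that each $G_{n}^{*}$ lies between $\Z^{d}$ and $\Q^{d}$ with finite index. Since $G_{n} \subseteq \Z^{d}$, the definition of the dual lattice gives $(\Z^{d})^{*} = \Z^{d} \subseteq G_{n}^{*}$. On the other hand, $G_{n}$ has finite index in $\Z^{d}$, so $G_{n}$ is itself a lattice containing $N \Z^{d}$ for some positive integer $N$ (for instance $N = [\Z^{d}:G_{n}]$), and hence $G_{n}^{*} \subseteq (N \Z^{d})^{*} = \tfrac{1}{N} \Z^{d} \subseteq \Q^{d}$. Thus $\Z^{d} \subseteq G_{n}^{*} \subseteq \Q^{d}$ with $[G_{n}^{*}:\Z^{d}] < \infty$. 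Since the sequence $G_{1} \supseteq G_{2} \supseteq \cdots$ is decreasing, the duals satisfy $G_{1}^{*} \subseteq G_{2}^{*} \subseteq \cdots$, so $H = \bigcup_{n} G_{n}^{*}$ is an increasing union of subgroups of $\Q^{d}$ all containing $\Z^{d}$. This establishes $\Z^{d} \subseteq H \subseteq \Q^{d}$.

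Next, set $H_{n} := G_{n}^{*}$. By the above, $\{H_{n}\}_{n \geq 1}$ is an increasing sequence of finite-index extensions of $\Z^{d}$ with $\bigcup_{n} H_{n} = H$. Theorem \ref{odom:17} applies to this sequence and yields a conjugacy
\[
(Y_{H}, \psi_{H}) \;\cong\; (X_{\mathcal{G}'}, \varphi_{\mathcal{G}'}),
\]
where $\mathcal{G}' = \{H_{1}^{*}, H_{2}^{*}, \ldots\} = \{G_{1}^{**}, G_{2}^{**}, \ldots\}$.

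The remaining step, which is the real content, is the biduality $G_{n}^{**} = G_{n}$. This is the standard fact that for a lattice $K \subseteq \R^{d}$ one has $K^{**} = K$: the inclusion $K \subseteq K^{**}$ is immediate from the definition, and the reverse inclusion follows because the pairing $\langle \cdot, \cdot \rangle$ identifies $\widehat{\R^{d}/K}$ with $K^{*}$ and hence $\widehat{\R^{d}/K^{*}}$ with $K^{**}$; equality of the two compact dual groups forces $K = K^{**}$. Applying this to each $G_{n}$ gives $\mathcal{G}' = \mathcal{G}$ and hence $(Y_{H}, \psi_{H})$ is conjugate to $(X_{\mathcal{G}}, \varphi_{\mathcal{G}})$, as required. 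The main potential obstacle is simply making sure that the functoriality of the conjugacies $h_{K}$ in Lemma \ref{odom:14} lines up with the biduality to give a single conjugacy at the level of inverse limits, but this is guaranteed by the commutative diagram in that lemma applied to the nested sequence $H_{1} \subseteq H_{2} \subseteq \cdots$.
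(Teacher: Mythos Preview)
Your proposal is correct and is exactly the argument the paper has in mind: the paper states Theorem \ref{odom:20} without proof, treating it as the evident converse to Theorem \ref{odom:17} via the biduality $G_{n}^{**}=G_{n}$, and you have filled in precisely those details. One small remark: since the paper's convention is $G_{1}=\Z^{d}$, you automatically get $H_{1}=G_{1}^{*}=\Z^{d}$, so the hypothesis of Theorem \ref{odom:17} that the increasing sequence begin at $\Z^{d}$ is satisfied without further comment.
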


In short, we have an equivalent formulation
for $\Z^{d}$-odometers \newline 
parametrized by the group
$H$ instead of the sequence $\mathcal{G}$. 
 We believe that the 
parametrization by the group 
$H$ is more natural. At this point, it is slightly simpler,
being given as a single group, rather than a sequence
of groups. That is rather trivial; we hope to 
make the case more convincing in subsequent sections.

Next, we list two relatively simple results which are worth 
noting.

\begin{prop}
\label{odom:24}
Let $d_{1}, d_{2} \geq 1$ and
 $\Z^{d_{i}} \subseteq H^{i} \subseteq \Q^{d_{i}}$ 
 be groups for $ i = 1,2$. Then 
$(Y_{H^{1} \oplus H^{2}}, \psi_{H^{1} \oplus H^{2}})$ 
is conjugate to the product 
system \newline
$(Y_{H^{1}} \times Y_{ H^{2}}, \psi_{H^{1} }
 \times \psi_{H^{2}})$ as $\Z^{d_{1} + d_{2}}$-systems.
\end{prop}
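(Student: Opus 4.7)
The plan is to build the conjugacy directly out of Pontryagin duality, using the fact that the dual of a direct sum of discrete abelian groups is the product of their duals.

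First, I would observe that, as discrete abelian groups,
\[
(H^{1} \oplus H^{2})/\Z^{d_{1}+d_{2}} \;\cong\; (H^{1}/\Z^{d_{1}}) \oplus (H^{2}/\Z^{d_{2}}),
\]
via the obvious coordinate-wise map. Taking Pontryagin duals and using the standard identification $\widehat{A \oplus B} \cong \widehat{A} \times \widehat{B}$ for discrete abelian groups $A, B$, we obtain a natural homeomorphism
\[
h : Y_{H^{1} \oplus H^{2}} \;\longrightarrow\; Y_{H^{1}} \times Y_{H^{2}},
\]
sending a homomorphism $x : (H^{1}\oplus H^{2})/\Z^{d_{1}+d_{2}} \to \T$ to the pair $(x_{1}, x_{2})$, where $x_{i}(h_{i}+\Z^{d_{i}}) = x(h_{i}+\Z^{d_{1}+d_{2}})$ (viewing $h_{i}$ as sitting inside $H^{1}\oplus H^{2}$ via the $i$-th summand).

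Next, I would verify that $h$ intertwines the $\Z^{d_{1}+d_{2}}$-actions. Writing $n = (n_{1}, n_{2}) \in \Z^{d_{1}} \oplus \Z^{d_{2}}$ and $h = (h_{1}, h_{2}) \in H^{1}\oplus H^{2}$, we have
\[
\langle (h_{1},h_{2}), (n_{1},n_{2}) \rangle = \langle h_{1}, n_{1}\rangle + \langle h_{2}, n_{2}\rangle,
\]
so the defining formula
\[
\psi_{H^{1}\oplus H^{2}}^{n}(x)(h+\Z^{d_{1}+d_{2}}) = x(h)\, e^{2\pi i \langle h, n\rangle}
\]
splits as a product of the two analogous formulas for $\psi_{H^{i}}^{n_{i}}$ acting on $x_{i}$. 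This yields
\[
h \circ \psi_{H^{1}\oplus H^{2}}^{(n_{1},n_{2})} = (\psi_{H^{1}}^{n_{1}} \times \psi_{H^{2}}^{n_{2}}) \circ h,
\]
which is exactly the conjugacy condition.

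The only step that requires any real care is the bookkeeping between the identification $(H^{1}\oplus H^{2})/\Z^{d_{1}+d_{2}} \cong (H^{1}/\Z^{d_{1}}) \oplus (H^{2}/\Z^{d_{2}})$ and the dualities; but once one is explicit about what ``$h_{i}$ sitting inside $H^{1}\oplus H^{2}$'' means and remembers that the pairing between $\Z^{d_{1}+d_{2}}$ and $\T^{d_{1}+d_{2}}$ splits as a sum, the verification is mechanical. No deeper obstacle appears: the proposition is essentially a restatement of functoriality of $\widehat{\cdot}$ under direct sums, combined with the product structure of the pairing $\langle \cdot, \cdot\rangle$ on $\R^{d_{1}+d_{2}}$.
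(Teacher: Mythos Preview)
Your argument is correct and is exactly the natural one: the paper states this proposition as one of ``two relatively simple results which are worth noting'' and gives no proof, so your duality-based verification is precisely the kind of routine check the authors had in mind. There is nothing to add.
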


One other nice feature of our parameterization of odometers
by $\Z^{d} \subseteq H \subseteq \Q^{d}$ is that it makes
the difference between conjugacy and isomorphism 
relatively easy to describe, as follows.

\begin{prop}
\label{odom:25}
Let $\Z^{d} \subseteq H \subseteq \Q^{d}$ be a group and 
let $\alpha$ be in $GL_{d}(\Z)$.
Then  
the $\Z^{d}$-odometers $(Y_{H}, \psi_{H})$ and 
$(Y_{\alpha H}, \psi_{\alpha H})$ are isomorphic
via the automorphism of $\Z^{d}$ sending $n \in \Z^{d}$ to 
$ \alpha n$.

Conversely, if $ \Z^{d} \subseteq H, H' \subseteq \Q^{d}$ are 
two dense subgroups of $\Q^{d}$, such that
the $\Z^{d}$-odometers, 
$(Y_{H}, \psi_{H})$ and  
$(Y_{H'}, \psi_{H'})$ are isomorphic, then 
there is  $\alpha$ in $GL_{d}(\Z)$  such that 
$(Y_{\alpha H}, \psi_{\alpha H})$ and
 $(Y_{H'}, \psi_{H'})$ are conjugate.
\end{prop}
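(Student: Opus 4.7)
For the first (existence) assertion, the isomorphism is constructed by Pontryagin duality. Since $\alpha \in GL_{d}(\Z)$ satisfies $\alpha \Z^{d} = \Z^{d}$, multiplication by $\alpha$ induces a group isomorphism $\bar{\alpha}: H/\Z^{d} \to \alpha H/\Z^{d}$ of discrete abelian groups; its Pontryagin dual is a topological group isomorphism $h: Y_{H} \to Y_{\alpha H}$ characterised explicitly by
\[
h(x)(\alpha k + \Z^{d}) = x(k + \Z^{d}), \quad x \in Y_{H},\ k \in H.
\]
I would then verify the equivariance relation $h \circ \psi_{H}^{n} = \psi_{\alpha H}^{\alpha n} \circ h$ by evaluating both sides at $\alpha k + \Z^{d}$ and using the explicit formula $\psi_{H}^{n}(x)(h + \Z^{d}) = x(h + \Z^{d}) e^{2 \pi i \langle h, n \rangle}$; the equality reduces to an identity about the bilinear pairing $\langle \cdot, \cdot \rangle$ obtained by transferring $\alpha$ between its two slots.

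For the converse, assume $(h_{0}, \alpha)$ is an isomorphism from $(Y_{H}, \psi_{H})$ to $(Y_{H'}, \psi_{H'})$, where $\alpha \in GL_{d}(\Z)$ is the associated group automorphism of $\Z^{d}$. Density of $H$ in $\Q^{d}$ together with Proposition \ref{odom:11}(1) guarantees that $\psi_{H}$ is free, and hence that $\alpha$ is unambiguously determined by $h_{0}$. Applying the first part of the proposition to this same $\alpha$ yields an isomorphism $(h_{1}, \alpha): (Y_{H}, \psi_{H}) \to (Y_{\alpha H}, \psi_{\alpha H})$. The composition $h_{0} \circ h_{1}^{-1}: Y_{\alpha H} \to Y_{H'}$ is then a homeomorphism, and chaining the two intertwining relations gives
\[
(h_{0} \circ h_{1}^{-1}) \circ \psi_{\alpha H}^{\alpha n} = h_{0} \circ \psi_{H}^{n} \circ h_{1}^{-1} = \psi_{H'}^{\alpha n} \circ (h_{0} \circ h_{1}^{-1})
\]
for every $n \in \Z^{d}$. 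Since $\alpha$ is a bijection of $\Z^{d}$, this is precisely the required conjugacy.

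The genuine technical content lies in the first part, namely correctly matching the left-multiplicative action of $\alpha$ on $H$ with the dual action on $\Z^{d}$ through the pairing; a naive identification produces $\alpha^{T}$ or $\alpha^{-T}$ where the statement calls for $\alpha$. Since $GL_{d}(\Z)$ is closed under $\alpha \mapsto \alpha^{-T}$, this is ultimately a bookkeeping issue, but it must be carried out carefully in order to arrive at the precise form in the statement. Once the first part is established, the converse is a purely formal consequence of composing isomorphisms.
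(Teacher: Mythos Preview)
Your approach is correct and essentially identical to the paper's: both construct the homeomorphism as the Pontryagin dual of the isomorphism $H/\Z^{d} \to \alpha H/\Z^{d}$ induced by $\alpha$, and obtain the converse by composing the given isomorphism with the one from the first part to extract a conjugacy. The paper packages the first step as a commutative square involving $\hat{\rho}_{H}$ and $\hat{\rho}_{\alpha H}$ rather than writing out $h$ explicitly, and is in fact looser than you are about the transpose bookkeeping you correctly flag.
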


\begin{proof}
It is clear that $\alpha$ induces automorphisms of both $\Z^{d}$ and 
$\Q^{d}$. It is then a simple matter to observe in our 
definition of $(Y_{H}, \psi_{H})$ that we have a commutative 
diagram

\hspace{3cm}
\xymatrix{\Z^{d} \ar^{\hat{\rho}_{H}}[r] \ar_{\alpha}[d] &
   \widehat{H/ \Z^{d}} \ar^{\alpha}[d]  \\
   \Z^{d} \ar^{\hat{\rho}_{\alpha H}}[r]  &
   \widehat{\alpha H/ \Z^{d} }  } 
   
In fact, the commutativity of this diagram is simply a
re-phrasing of the  desired isomorphism between the two systems,   
implemented by $\alpha$.

For the second part, the isomorphism between the actions 
 provides an 
automorphism of the group $\Z^{d}$. But such 
an automorphism is always implemented by a matrix $\alpha$ as above.
From the first part, we know that 
$(Y_{H}, \psi_{H})$ and $(Y_{\alpha H}, \psi_{\alpha H})$ are 
isomorphic via $\alpha$. It follows that 
the latter is conjugate to $(Y_{H'}, \psi_{H'})$.
\end{proof}

The next result shows that the superindex can be computed from 
a given expression of $H$ as a union of finite index extensions 
of $\Z^{d}$. This is useful, in view of 
Theorems \ref{odom:17} and \ref{odom:20}. The proof
 is trivial and we omit it.

\begin{prop}
\label{odom:30}
Let $ \Z^{d} \subseteq H \subseteq \Q^{d}$.
If $H_{n}, n \geq 1$ is an increasing sequence 
of finite-index extensions of $\Z^{d}$ with union $H$, then 
\[
 [[H: \Z^{d}]] 
  = \cup_{n =1}^{\infty} \{ k \in \N \mid k | [H_{n}: \Z^{d}] \}.
  \]
\end{prop}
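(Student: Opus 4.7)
The plan is to verify both inclusions of the claimed equality, using the structure of finite abelian groups on one side and Lagrange's theorem on the other. Throughout, I work with the fact that, for each $n$, the quotient $H_n/\Z^d$ is a finite abelian group of order $[H_n:\Z^d]$.

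First I would establish the inclusion $\supseteq$. Fix $n \geq 1$ and a divisor $k$ of $[H_n:\Z^d]$. Since every divisor of the order of a finite abelian group is realized as the order of some subgroup (reduce to $p$-primary components using the structure theorem, and observe that an abelian $p$-group $\bigoplus_i \Z/p^{b_i}\Z$ obviously contains subgroups of every order $p^a$ with $a \leq \sum_i b_i$), there is a subgroup of $H_n/\Z^d$ of order $k$. Pulling it back under the quotient map $H_n \to H_n/\Z^d$ gives a group $H'$ with $\Z^d \subseteq H' \subseteq H_n \subseteq H$ and $[H':\Z^d] = k$, so $k \in [[H:\Z^d]]$.

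Next I would prove $\subseteq$. Given $\Z^d \subseteq H' \subseteq H$ with $k := [H':\Z^d]$ finite, the key observation is that $H'$ lies entirely in some $H_n$. Indeed, $H'/\Z^d$ is a finite group, so finitely many coset representatives $g_1, \ldots, g_m \in H'$ suffice to generate $H'$ over $\Z^d$; since each $g_i$ lies in $H = \bigcup_n H_n$, each belongs to some $H_{n_i}$, and taking $n = \max_i n_i$ gives $H' = \Z^d + \langle g_1,\ldots,g_m \rangle \subseteq H_n$. Lagrange's theorem applied to the inclusion $H'/\Z^d \subseteq H_n/\Z^d$ of finite groups then yields $k \mid [H_n:\Z^d]$.

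As the authors themselves note, the argument is routine and presents no real obstacle; it amounts to combining two elementary facts from group theory. If anything, the only point requiring a moment's care is the eventual containment $H' \subseteq H_n$, which depends on the finiteness of $[H':\Z^d]$ together with the assumption $H = \bigcup_n H_n$.
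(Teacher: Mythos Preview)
Your argument is correct and complete. The paper actually omits the proof entirely, declaring it ``trivial'', so your routine verification via the converse of Lagrange's theorem for finite abelian groups (for $\supseteq$) together with finite generation of $H'/\Z^{d}$ and Lagrange (for $\subseteq$) is precisely the kind of proof the authors have in mind.
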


\section{Cohomology for $\Z^{d}$-actions}

In this section, we provide some basic definitions and results
regarding cohomology. We begin with a Cantor minimal $\Z^{d}$-system, 
$(X, \varphi)$. We let $C(X, \Z)$ denote the set of 
continuous integer-valued functions on $X$. Of course, each such 
function is simply the (finite) sum of integer multiples 
of characteristic functions of clopen subsets of $X$.
We regard it as an abelian group with pointwise addition of functions.
We note that the non-negative functions form a positive cone.
It is also a $\Z^{d}$-module via $n \cdot f(x) = f( \varphi^{n}(x))$, 
for $n$ in $\Z^{d}$, $f$ in $C(X, \Z)$ and $x$ in $X$.
We define $H^{*}(X, \varphi)$ to be the group 
cohomology of 
$\Z^{d}$ with coefficients in the module $C(X, \Z)$.
This was first considered by Forrest and Hunton \cite{FH}. 

We refer the reader to \cite{Br} for a more thorough 
treatment of cohomology. We remark that this may be described in the 
following fashion. For $n \geq 0$, let $C^{n}$ be the group 
of integer-valued functions on $ X \times \times_{i=1}^{n} \Z^{d}$
which are continuous in the product topology.
We have a coboundary operator $d: C^{n} \rightarrow C^{n+1}$ defined
by
\begin{eqnarray*}
d(\theta)(x, s_{0},s_{1}, \ldots, s_{n})
  &  = &   \theta(\varphi^{s_{0}}(x), s_{1}, \ldots, s_{n}) \\
   &  &  + \sum_{i=1}^{n} (-1)^{i} 
   \theta(x, s_{0},s_{1}, \ldots,s_{i-1}+ s_{i}, \ldots, s_{n}) \\
   &  &   + (-1)^{n+1} \theta(x, s_{0}, \ldots, s_{n-1})
  \end{eqnarray*}
  for $\theta$ in $C^{n}$, $x$ in $X$ and 
  $s_{0}, \ldots, s_{n}$ in $\Z^{d}$. We will let 
  $Z^{n}(X, \varphi), B^{n}(X, \varphi)$ and  $H^{n}(X, \varphi)$ denote
  the $n$-cocycles, $n$-coboundaries and 
  $n$-cohomology groups, respectively,  of this complex.
  
  We will have particular interest in the group $H^{1}(X, \varphi)$. 
  Notice that here we are looking at continuous functions 
  $\theta: X \times \Z^{d} \rightarrow \Z$ and such a function
  is a $1$-cocycle (i.e. $d(\theta)= 0$) if and only if
  \[
  \theta(x, m+n) = \theta(x, m) + \theta(\varphi^{m}(x), n),
  \]
  for all $m,n $ in $\Z^{d}$ and $x$ in $X$.
  A cocycle, $\theta$, is a coboundary if there is $h$ in $C(X, \Z)$ such that 
  $\theta(x, n) = h(\varphi^{n}(x)) - h(x)$, for all 
  $x$ in $X$ and $n$ in $\Z^{d}$.

  \begin{prop}
  \label{coho:2}
  The cohomology $H^{*}(X, \varphi)$ is an invariant 
  of continuous orbit equivalence.
  \end{prop}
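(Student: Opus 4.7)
The plan is to construct mutually inverse chain maps between the cochain complexes $C^{*}(Y,\psi)$ and $C^{*}(X,\varphi)$ directly from the orbit cocycles. Let $h : X \to Y$ be a continuous orbit equivalence between a free $G$-action $(X,\varphi)$ and a free $H$-action $(Y,\psi)$, with continuous cocycles $\alpha : X \times G \to H$ and $\beta : Y \times H \to G$ as in Definition \ref{intro: 15}. The first step I would take is to record that $\alpha$ is itself a $1$-cocycle over $\varphi$: applying $h$ to $\varphi^{m+n}(x) = \varphi^{n}(\varphi^{m}(x))$ and invoking freeness of $\psi$ yields $\alpha(x, m+n) = \alpha(x,m) + \alpha(\varphi^{m}(x),n)$. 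By symmetry, $\beta$ is a $1$-cocycle over $\psi$, and the two are reciprocal in the sense that $\beta(h(x), \alpha(x, g)) = g$ and $\alpha(h^{-1}(y), \beta(y, g')) = g'$, obtained by comparing the defining equations for $\alpha$ and $\beta$.

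With these relations in hand, I would define the pullback $\Phi^{*} : C^{n}(Y, \psi) \to C^{n}(X, \varphi)$ by
\[
\Phi^{*}\theta(x, s_{1}, \ldots, s_{n}) = \theta\bigl(h(x),\, \alpha(x, s_{1}),\, \alpha(\varphi^{s_{1}}(x), s_{2}),\, \ldots,\, \alpha(\varphi^{s_{1}+\cdots+s_{n-1}}(x), s_{n})\bigr),
\]
and $\Psi^{*}$ analogously with $(h, \alpha)$ replaced by $(h^{-1}, \beta)$. Continuity of $\Phi^{*}\theta$ is automatic from joint continuity of $h$, $\alpha$, $\varphi$, and $\theta$, with the acting groups discrete. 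A direct substitution, applying the two reciprocal identities above to each argument in turn, shows that $\Psi^{*}\Phi^{*}$ and $\Phi^{*}\Psi^{*}$ are literally the identity on cochains, not merely homotopic to it.

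The core remaining task is to check that $\Phi^{*}$ intertwines the coboundaries, i.e.\ $\Phi^{*}\circ d = d\circ \Phi^{*}$. For $n=1$ this amounts to
\[
\Phi^{*}\theta(x, s_{0}+s_{1}) = \theta\bigl(h(x),\, \alpha(x,s_{0}) + \alpha(\varphi^{s_{0}}(x), s_{1})\bigr) = \Phi^{*}\theta(x, s_{0}) + \Phi^{*}\theta(\varphi^{s_{0}}(x), s_{1}),
\]
which follows from the cocycle identity for $\alpha$, the cocycle identity for $\theta$, and the equivariance $\psi^{\alpha(x,s_{0})}(h(x)) = h(\varphi^{s_{0}}(x))$. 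The same three ingredients, applied to each of the $n+1$ summands of $d\theta$, handle arbitrary degree. I expect this bookkeeping to be the main obstacle: when two consecutive arguments $s_{i-1}$ and $s_{i}$ are merged by the coboundary, the twisted basepoints $\varphi^{s_{1}+\cdots+s_{i-1}}(x)$ must be propagated correctly through the cocycle identity for $\alpha$, but no ingredient beyond that identity is ever needed. Once $\Phi^{*}$ and $\Psi^{*}$ are established as mutually inverse chain maps, they descend to mutually inverse isomorphisms $H^{n}(X, \varphi) \cong H^{n}(Y, \psi)$ for every $n \geq 0$.
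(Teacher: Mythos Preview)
Your proposal is correct and is precisely the ``direct computation'' the paper alludes to but does not carry out: the paper explicitly declines to prove this proposition, merely remarking that one can either compute directly or pass through the isomorphism of \'etale groupoids induced by a continuous orbit equivalence (citing \cite{Li}). Your construction of mutually inverse chain maps via the orbit cocycles, together with the verification that $\Phi^{*}$ commutes with $d$ using the cocycle identity for $\alpha$ and the equivariance $h\circ\varphi^{s}=\psi^{\alpha(\cdot,s)}\circ h$, is exactly the computation the paper has in mind; the groupoid route would reach the same conclusion by observing that the cochain complex here is the groupoid cohomology complex of the transformation groupoid and that continuous orbit equivalence yields a groupoid isomorphism.
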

  
 We will not prove this.
  One rather long method of proof is by direct computation.
  Another is to observe that the cohomology is actually 
  the groupoid cohomology of the \'{e}tale groupoid 
  $X \times \Z^{d}$ (see \cite{Li})  and that continuous orbit
  equivalence implies (or is actually equivalent to) 
  isomorphism between the \'{e}tale groupoids.
  It is worth noting that the cohomology is 
  \emph{not} an invariant of orbit  equivalence because
  the cocycles are required to be continuous.
  
  The fact that our cohomology groups are coming from dynamical systems 
  provides extra tools for their study. 
  Specifically, our systems always have invariant measures 
  (unique invariant measures for odometers) and these can be paired 
  with cocycles.

  \begin{prop}
  \label{coho:5}
  Let $\mu$ be an invariant probability measure for 
  the  Cantor $\Z^{d}$-system  $(X, \varphi)$. For any 
  $1$-cocycle  define 
  $\tau_{\mu}^{1}(\theta): \Z^{d} \rightarrow \R$ by 
  \[
  \tau_{\mu}^{1}(\theta)(n) = \int_{X} \theta(x, n) d\mu(x),
  \]
  for $n$ in $\Z^{d}$. Then $\tau_{\mu}^{1}(\theta)$
  is a group homomorphism. Moreover, it 
  is zero if $\theta$ is a coboundary and hence passes to 
  a well-defined group homomorphism
  \[
  \tau_{\mu}^{1}: H^{1}(X, \varphi) \rightarrow Hom(\Z^{d}, \R).
  \]
  \end{prop}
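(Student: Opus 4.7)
The plan is to verify the three assertions directly from the cocycle/coboundary equations, with the invariance of $\mu$ doing all the work.

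First, I would check that $\tau_\mu^1(\theta)(n)$ is a well-defined real number: for each fixed $n \in \Z^d$, the map $x \mapsto \theta(x,n)$ is continuous from the compact space $X$ into $\Z$, hence bounded, hence integrable against the probability measure $\mu$. So $\tau_\mu^1(\theta): \Z^d \to \R$ is a legitimate function (indeed $\Z$-valued in this generality, but we allow $\R$-values so that the target $\mathrm{Hom}(\Z^d,\R)$ is the natural one).

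Next, for the homomorphism property, I would apply the $1$-cocycle condition
\[
\theta(x, m+n) = \theta(x,m) + \theta(\varphi^m(x), n),
\]
integrate both sides against $\mu$, and invoke $\varphi$-invariance of $\mu$ to obtain
\[
\int_X \theta(\varphi^m(x), n)\, d\mu(x) = \int_X \theta(x,n)\, d\mu(x).
\]
This yields $\tau_\mu^1(\theta)(m+n) = \tau_\mu^1(\theta)(m) + \tau_\mu^1(\theta)(n)$, so $\tau_\mu^1(\theta) \in \mathrm{Hom}(\Z^d,\R)$. Moreover, the assignment $\theta \mapsto \tau_\mu^1(\theta)$ is $\Z$-linear in $\theta$ on the cocycle group $Z^1(X,\varphi)$ by linearity of the integral.

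For the vanishing on coboundaries, if $\theta(x,n) = h(\varphi^n(x)) - h(x)$ for some $h \in C(X,\Z)$, then again by invariance of $\mu$,
\[
\tau_\mu^1(\theta)(n) = \int_X h(\varphi^n(x))\, d\mu(x) - \int_X h(x)\, d\mu(x) = 0
\]
for every $n \in \Z^d$. Combined with the previous step, this shows $\tau_\mu^1$ kills $B^1(X,\varphi)$ and therefore descends to a well-defined group homomorphism $\tau_\mu^1 : H^1(X,\varphi) \to \mathrm{Hom}(\Z^d,\R)$.

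There is no real obstacle here: the whole argument is a direct consequence of the cocycle identity, the coboundary formula, and the $\varphi$-invariance of $\mu$. The only point one should not gloss over is the integrability/finiteness observation in the first step, which relies on compactness of $X$ and continuity of $\theta(\cdot,n)$, and the fact that integration is clearly $\Z$-linear so that both $\tau_\mu^1$ restricted to $Z^1$ and the induced map on $H^1$ are genuinely group homomorphisms in the variable $\theta$ as well.
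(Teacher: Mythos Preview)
Your proof is correct and follows essentially the same approach as the paper: both use the cocycle identity plus $\mu$-invariance to get the homomorphism property, and the coboundary formula plus $\mu$-invariance to get vanishing on $B^1$. You add an explicit integrability check and are slightly more careful about linearity in $\theta$, but the argument is the same.
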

  
  \begin{proof}
  First, we check that 
  $\tau_{\mu}^{1}(\theta)$ is a group homomorphism. 
  Using the invariance of $\mu$, 
  for $m,n$ in $\Z^{d}$, we have
  \begin{eqnarray*}
  \tau_{\mu}^{1}(\theta)(m+n) & = & 
   \int_{X} \theta(x, m + n) d\mu(x) \\
   & = & 
   \int_{X} \left( \theta(x, m) + \theta(\varphi^{m}(x), n) \right) d\mu(x) \\
     & = & 
   \int_{X} \theta(x, m) d\mu(x)  +   \int_{X}\theta(\varphi^{m}(x), n) d\mu(x) \\
    & = & 
   \int_{X} \theta(x, m) d\mu(x)  +   \int_{X}\theta(x, n) d\mu(x) \\
   & = &  \tau_{\mu}^{1}(\theta)(m) +   \tau_{\mu}^{1}(\theta)(n).
  \end{eqnarray*}
  
  Next, we check that if $\theta = dh$, then $\tau_{\mu}^{1}(\theta) = 0$.
  Let $n$ be in $\Z^{d}$. Again using the invariance of $\mu$, we have 
  \begin{eqnarray*}
  \tau_{\mu}^{1}(\theta)(n) & = & 
   \int_{X} \theta(x, n) d\mu(x) \\
   & = & 
   \int_{X} h(\varphi^{n}(x)) - h(x) d\mu(x) \\
     & = & 
   \int_{X} h(x) d\mu(x) -  \int_{X} h(x) d\mu(x) \\
    &  =  & 0.
  \end{eqnarray*}
  
   The fact that
   $\tau_{\mu}^{1}$ is additive is obvious. 
  \end{proof}

  We want to make one simplification to this result and 
  that concerns the group $Hom(\Z^{d}, \R)$. There is an 
  obvious isomorphism from this group to 
  $\R^{d}$,  taking $\alpha$ in  $Hom(\Z^{d}, \R)$
  to $(\alpha(e_{1}), \alpha(e_{2}), \ldots, \alpha(e_{d}))$ in $\R^{d}$.
  We simply build this into our definition, without 
  changing our notation.
  
  \begin{defn}
  \label{coho:10}
  Let $\mu$ be an invariant probability measure for 
  the  Cantor $\Z^{d}$-system  $(X, \varphi)$. We define 
  $\tau_{\mu}^{1}: H^{1}(X, \varphi) \rightarrow \R^{d}$ by 
  \[
  \tau_{\mu}^{1}([\theta]) = \left( \tau_{\mu}^{1}(\theta)(e_{1}), \ldots, 
  \tau_{\mu}^{1}(\theta)(e_{d}) \right),
  \]
  for any $1$-cocycle $\theta$.
  \end{defn}
 
It is worth noting that this final version of the  invariant
depends on the generators of $\Z^{d}$. In particular, 
isomorphic systems do \emph{not} have the same map.

 We also introduce the group of co-invariants;
  we let $B(X, \varphi)$ denote the subgroup of  $C(X, \Z)$
   generated by  all functions of the form
   $h - h \circ \varphi^{n}$, where $h$ is any element 
   of $C(X, \Z)$ and $n$ is in $\Z^{d}$. We let 
   \[
   D(X, \varphi ) = C(X, \Z) / B(X, \varphi).
   \]
   We let $[f]$ denote the coset of $f \in C(X, \Z)$.
   We also 
   endow it with the positive cone
   \[
   D(X, \varphi)^{+} = \{  [f ] \mid f \geq 0 \}
   \]
   and order unit $[1]$, where $1$ denotes the constant function.
   
   Once again, if $\mu$ is an invariant probability measure for the 
   system $(X, \varphi)$ then the formula
   \[
   \tau_{\mu}([f]) = \int_{X} f(x) d\mu(x),
   \]
   defines a positive group homomorphism from $D(X, \varphi)$ 
   to $\R$.
   
   We also define 
  $ B_{m}(X, \varphi)$ to be the set of all  $ f$  in $ C(X, \Z)$
  such that $ \int_{X} f d\mu = 0$, for
   all $\varphi$-invariant measures  on $X$. It evidently contains 
   $B(X, \varphi)$ and we let $D_{m}(X, \varphi)$ denote the quotient
   with order structure analogous to the before. This is a 
   quotient of $D(X, \varphi)$.
   
   The importance of the ordered group $D_{m}(X, \varphi)$ is that, 
   for minimal free actions of $\Z^{d}$ on the Cantor set, 
   it is a complete  invariant for orbit equivalence \cite{GMPS2}.
   
   We remark here that for minimal, free Cantor
    $\Z^{d}$-systems, $D(X, \varphi)$ is actually isomorphic 
   to $H^{d}(X, \varphi)$, although the latter 
   has no natural order structure. The isomorphism is induced
   by taking an $d$-cocycle $\theta$ to the function
   $f(x) = \theta(x, e_{1}, \ldots, e_{d})$ in $C(X, \Z)$, where
    $e_{1}, \ldots, e_{d}$ is the standard basis for $\R^{d}$.
   We refer the reader to \cite{FH} although we will not use 
   this fact. We also refer the reader to \cite{GPS2}.

\section{Cohomology for $\Z^{d}$-odometers}

The main results of this section describe the cohomology of
a free, minimal $\Z^{d}$-odometer and are
based on two relatively simple results 
on cohomology.

\begin{lemma}
\label{odco:2}
Let 
\[
(X_{1}, \varphi_{1}) \stackrel{\pi_{1}}{\longleftarrow} (X_{2}, \varphi_{2}) \stackrel{\pi_{2}}{\longleftarrow} \cdots
\]
be a system of $\Z^{d}$-actions and let $(X, \varphi)$ be 
their inverse limit. Then, for all $i \geq 0$, we have
\[
H^{i}(X, \varphi) = \lim_{n \rightarrow \infty} 
H^{i}(X_{1}, \varphi_{1}) \stackrel{\pi_{1}^{*}}{\longrightarrow}
 H^{i}(X_{2}, \varphi_{2}) \stackrel{\pi_{2}^{*}}{\longrightarrow}
 \cdots
 \]
 In addition, we have
  \[
D(X, \varphi) = \lim_{n \rightarrow \infty} 
D(X_{1}, \varphi_{1}) \stackrel{\pi_{1}^{*}}{\longrightarrow}
D(X_{2}, \varphi_{2}) \stackrel{\pi_{2}^{*}}{\longrightarrow}
 \cdots
 \]
\end{lemma}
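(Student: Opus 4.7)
The plan is to reduce both statements to the single observation that the pullback maps $\pi_{n}^{*}$ exhibit $C(X,\Z)$ as the direct limit $\varinjlim_{n}C(X_{n},\Z)$. Since $X$ is a compact inverse limit of compact totally disconnected spaces, every continuous $\Z$-valued function on $X$ is locally constant and hence specified by a finite partition of $X$ into clopen sets; by compactness, any such partition is lifted along $\pi_{n}$ for some $n$. Granted this, the two claims in the lemma become two different extraction arguments from the explicit complexes that define $H^{*}$ and $D$.

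For the cohomology, I would avoid working directly with the bar complex of the excerpt, whose cochain groups $C^{n}$ are infinite products of copies of $C(X,\Z)$ indexed by $(\Z^{d})^{n}$ and therefore do not commute with filtered colimits. Instead, I would compute $H^{i}(\Z^{d};C(X,\Z))$ from a finite free resolution of $\Z$ over $\Z[\Z^{d}]$ of finite type, concretely the Koszul resolution
\[
0\to\Lambda^{d}\Z^{d}\otimes_{\Z}\Z[\Z^{d}]\to\cdots\to\Lambda^{1}\Z^{d}\otimes_{\Z}\Z[\Z^{d}]\to\Z[\Z^{d}]\to\Z\to 0.
\]
Applying $\operatorname{Hom}_{\Z[\Z^{d}]}(-,C(X,\Z))$ produces a bounded cochain complex whose $i$-th term is a finite direct sum of $\binom{d}{i}$ copies of $C(X,\Z)$. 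Finite direct sums commute with filtered colimits, and filtered colimits of abelian groups are exact and therefore commute with formation of cohomology; combined with the first paragraph this yields $H^{i}(X,\varphi)\cong\varinjlim_{n}H^{i}(X_{n},\varphi_{n})$, with transition maps the $\pi_{n}^{*}$ by naturality of the Koszul-to-bar comparison quasi-isomorphism.

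For the group $D$, I would argue directly. Every element of $B(X,\varphi)$ is a finite sum of differences $h_{j}-h_{j}\circ\varphi^{n_{j}}$ with $h_{j}\in C(X,\Z)$ and $n_{j}\in\Z^{d}$; by the first paragraph each $h_{j}$ equals $\pi_{k}^{*}g_{j}$ for a common $k$ and $g_{j}\in C(X_{k},\Z)$, so the sum lies in the image of the natural map $B(X_{k},\varphi_{k})\to B(X,\varphi)$. The reverse containment is immediate, giving $B(X,\varphi)=\varinjlim_{n}B(X_{n},\varphi_{n})$, and passing to quotients (which commutes with filtered colimits) yields $D(X,\varphi)=\varinjlim_{n}D(X_{n},\varphi_{n})$.

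The principal obstacle is the cohomology half: the bar complex of the excerpt is genuinely ill-adapted to a colimit argument, because its cochain modules are infinite products in each positive degree, and one must replace it by a quasi-isomorphic complex of finitely generated modules before exactness of filtered colimits can be applied term by term. Once one commits to this change of resolution, the rest is a routine instance of the general principle that $H^{*}(G;-)$ commutes with filtered colimits whenever $G$ admits a resolution by finitely generated projectives, which $\Z^{d}$ plainly does.
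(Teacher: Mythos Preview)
Your argument is correct and, in one important respect, more careful than what the paper offers. The paper does not actually prove this lemma: it refers to Brown and suggests as a hint that the bar-complex cochain groups $C(X\times(\Z^{d})^{n},\Z)$ are themselves the direct limit of the $C(X_{k}\times(\Z^{d})^{n},\Z)$. As you correctly point out, that hint is problematic for $n\geq 1$: since $(\Z^{d})^{n}$ is infinite discrete, those cochain groups are infinite products $\prod_{(\Z^{d})^{n}}C(X,\Z)$, and a general cochain need not factor through any single $X_{k}$. Your replacement of the bar resolution by the Koszul resolution is exactly the standard remedy (and presumably what the citation of Brown is really meant to invoke): the terms $\operatorname{Hom}_{\Z[\Z^{d}]}(\Lambda^{i}\Z^{d}\otimes\Z[\Z^{d}],M)\cong M^{\binom{d}{i}}$ are finite direct sums, which do commute with filtered colimits, and exactness of filtered colimits then carries the identification to cohomology. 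Your direct argument for $D$ is also correct and is essentially what the paper's hint yields in degree zero, where the product issue does not arise. One minor point: you assume the $X_{n}$ are totally disconnected, which the lemma does not state; this hypothesis is unnecessary for $C(X,\Z)=\varinjlim_{n} C(X_{n},\Z)$ (compactness alone forces any $f\in C(X,\Z)$ to have finite range and hence to be determined by a finite clopen partition), and in any case it holds in the paper's only application of the lemma.
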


We will not provide a proof, but we refer the reader to 
\cite{Br}. In fact, the reader can easily construct a proof
 himself or herself by starting with the fact that 
 $C(X \times  \Z^{d} \times \ldots \times \Z^{d}, \Z)$ 
 is the inductive limit of 
 \[
 C(X_{1} \times  \Z^{d} \times \ldots \times \Z^{d}, \Z)
 \stackrel{\pi_{1}^{*}}{\rightarrow} 
 C(X_{2} \times  \Z^{d} \times \ldots \times \Z^{d}, \Z)
 \stackrel{\pi_{2}^{*}}{\rightarrow} \cdots
 \]
 
 The second basic result is the following, which is a 
 very simple case of Shapiro's Lemma \cite{Br}. We 
 will sketch a proof,  in part for completeness 
 and in part because we will need to use some aspects 
 of the proof in the next computation. 

\begin{lemma}
\label{odco:5}
Let $d \geq 1$ and let $G$ be a finite 
index subgroup of $\Z^{d}$.
For each $\theta$ in 
$Z^{1}(\Z^{d}/G, \varphi_{G}) $, we define
$\alpha(\theta): G \rightarrow \Z$  by 
\[
\alpha(\theta)(g) = \theta(G, g), g \in G.
\]
Then $\alpha(\theta)$ is a group homomorphism
 and $\alpha$ 
induces an isomorphism from $H^{1}(X_{G}, \varphi_{G}) $
to $Hom(G, \Z)$.
\end{lemma}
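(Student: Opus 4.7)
The plan is a direct computation that captures Shapiro's lemma by hand in this finite, explicit setting. The main tool I would use is a set-theoretic section $s : \Z^{d}/G \rightarrow \Z^{d}$ of the quotient map with $s(G) = 0$, together with the associated $G$-valued $1$-cocycle $c(x, n) = s(x) + n - s(\varphi^{n}(x))$, which immediately satisfies the identity $c(x, m+n) = c(x, m) + c(\varphi^{m}(x), n)$. That $\alpha(\theta)$ is a group homomorphism follows from the cocycle identity applied to $\theta(G, g+g')$ together with the observation that $\varphi^{g}(G) = G$ for $g \in G$; coboundaries $\theta = dh$ satisfy $\alpha(\theta)(g) = h(G) - h(G) = 0$, so $\alpha$ descends to a homomorphism on $H^{1}$.

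For surjectivity, given $\beta \in Hom(G, \Z)$, I set $\theta(x, n) = \beta(c(x, n))$. Additivity of $\beta$ combined with the cocycle identity for $c$ makes $\theta$ a $1$-cocycle, and $c(G, g) = g$ for $g \in G$ yields $\alpha(\theta) = \beta$. For injectivity, assume $\alpha(\theta) = 0$. The first step is to upgrade this vanishing to $\theta(y, g) = 0$ for every $y \in \Z^{d}/G$ and every $g \in G$, by expanding $\theta(G, s(y) + g)$ in two ways via the cocycle relation: once as $\theta(G, s(y)) + \theta(y, g)$, using $\varphi^{s(y)}(G) = y$, and once as $\theta(G, g) + \theta(G, s(y))$, using $\varphi^{g}(G) = G$. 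Equating these and cancelling gives $\theta(y, g) = \alpha(\theta)(g) = 0$. With this in hand, I set $h(x) = \theta(G, s(x))$; the cocycle identity expresses $h(\varphi^{n}(x)) = \theta(G, s(x) + n - c(x, n))$, and expanding this via the cocycle and using the upgraded vanishing to kill the $G$-valued error term $c(x, n)$ leaves exactly $h(\varphi^{n}(x)) - h(x) = \theta(x, n)$, so $\theta = dh$.

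The essential obstacle is the injectivity step: a set-theoretic section is not equivariant, and the resulting $G$-valued discrepancy $c(x, n)$ has to be absorbed into a cobounding function. The cocycle relation, used together with the upgraded vanishing on $G$-valued arguments just established, is exactly the mechanism that performs this absorption; this is the one place in the proof where the cocycle condition is used in a non-trivial way beyond its immediate consequences on the subgroup $G$.
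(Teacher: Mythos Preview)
Your proof is correct and is essentially the same argument as the paper's, repackaged in slightly more conceptual language. Your section $s$ with $s(G)=0$ is exactly the paper's choice of coset representatives $k_{i}$ (with $k_{1}=0$ representing $G$), and your $G$-valued cocycle $c(x,n)=s(x)+n-s(\varphi^{n}(x))$ is precisely the quantity $g'$ appearing in the paper's surjectivity construction: if $x=k_{i}+G$ and $n=k_{j}+g$, then $c(x,n)=k_{i}+k_{j}+g-k_{l}=g'$, so your $\theta(x,n)=\beta(c(x,n))$ coincides with the paper's $\theta(k_{i}+G,k_{j}+g)=\gamma(g')$. Likewise your cobounding function $h(x)=\theta(G,s(x))$ is the paper's $f(k_{i}+G)=\theta(G,k_{i})$ (taking $\eta=0$). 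The one place where your write-up adds value is the injectivity step: the paper simply asserts that ``it is a simple computation (using the cocycle condition) to prove that $\theta-\eta-d(f)=0$'', whereas you spell out the key intermediate fact $\theta(y,g)=0$ for all $y$ and all $g\in G$, which is exactly what is needed to absorb the $G$-valued error $c(x,n)$ when verifying $dh=\theta$.
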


\begin{proof}
The fact that $\alpha(\theta)$ is a group homomorphism 
is a trivial consequence of the cocycle condition on $\theta$, 
when restricted to $\{ G \} \times G$.

Second, it is a trivial computation to see that, if 
$f$ is in $C(\Z^{d}/G, \Z)$, then $\alpha(d(f)) = 0$. This implies 
that $\alpha$ descends to a well-defined map on cohomology.

Third, it is an easy matter to see that 
if $\theta$ and $\eta$ are cocycles, then 
$\alpha( \theta + \eta) = \alpha(\theta) + \alpha( \eta)$.

Fourth, suppose that $\theta$ and $\eta$ are cocycles
and $\alpha(\theta ) = \alpha( \eta)$. 
Select $k_{i},  1 \leq i \leq [\Z^{d}: G]$ in $\Z^{d}$,
 one from  each coset of $G$ in $\Z^{d}$.
Define 
$f$ in $C(\Z^{d}/G, \Z)$ by 
\[
f( k_{i} + G) = \theta(G, k_{i}) - \eta(G, k_{i}), 1 \leq i \leq [\Z^{d}: G].
\]
It is  a simple computation (using the cocycle condition)
to prove that $\theta - \eta - d(f) = 0$. This shows that
the map induced by $\alpha$ at the level of cohomology is injective.

Finally, let $\gamma: G \rightarrow \Z$ be a homomorphism.
Let $k_{i},  1 \leq i \leq [\Z^{d}: G]$ in $\Z^{d}$ 
be as above. To define a cocycle $\theta$, it suffices
to pick $ 1 \leq i,j \leq [\Z^{d}: G]$ and $g$ in $G$
and define 
$\theta(k_{i} + G, k_{j} + g)$. Given $i,j,g$, there is a 
unique $ 1 \leq l \leq [\Z^{d}: G]$ and $g'$ in $G$ such that
$k_{i} + k_{j} + g = k_{l} + g'$ and we set 
\[
\theta(k_{i} + G, k_{j} + g) = \gamma(g').
\]
It is a fairly simple matter to check that 
$\theta$ is a cocycle and it is obvious that 
$\alpha(\theta) = \gamma$.
\end{proof}

\begin{thm}
\label{odco:7}
Let $\Z^{d} \subseteq H \subseteq \Q^{d}$.
Then we have $H^{1}(Y_{H}, \psi_{H}) \cong H$.
\end{thm}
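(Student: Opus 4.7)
The plan is to realize $(Y_H, \psi_H)$ as an inverse limit of finite systems, push the limit through cohomology, and identify the result with $H$ via a dual-lattice calculation. First, write $H$ as the union of an increasing sequence of finite-index extensions $\Z^d = H_1 \subseteq H_2 \subseteq \cdots$ (for instance $H_n = (\tfrac{1}{n!}\Z)^d \cap H$). By Theorem \ref{odom:17}, $(Y_H, \psi_H)$ is conjugate to the inverse limit of the finite $\Z^d$-systems $(\Z^d/H_n^*, \varphi_{H_n^*})$ under the quotient maps induced by $H_{n+1}^* \subseteq H_n^*$. Lemma \ref{odco:2} then gives
\[
H^1(Y_H, \psi_H) \;\cong\; \lim_{n \to \infty} H^1(\Z^d/H_n^*, \varphi_{H_n^*}),
\]
and Lemma \ref{odco:5}, applied to each finite-index subgroup $H_n^* \subseteq \Z^d$, identifies the $n$-th term with $\mathrm{Hom}(H_n^*, \Z)$.

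Second, I identify $\mathrm{Hom}(H_n^*, \Z)$ with $H_n$ through the standard inner product. Since $[H_n : \Z^d]<\infty$, the group $H_n$ is a full-rank lattice in $\R^d$, as is its dual $H_n^*$. Any group homomorphism $H_n^* \to \Z$ extends uniquely to an $\R$-linear functional on $\R^d$, and is therefore of the form $g \mapsto \langle h, g \rangle$ for a unique $h \in \R^d$. The integrality constraint is precisely $h \in (H_n^*)^* = H_n$, where the double-dual identity $(H_n^*)^* = H_n$ is the classical lattice statement. Thus $h \mapsto \langle h, \cdot \rangle$ is an isomorphism $H_n \cong \mathrm{Hom}(H_n^*, \Z)$.

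Third, I check naturality in $n$. Tracing through the definition of $\alpha$ in Lemma \ref{odco:5}, the map induced on cohomology by the quotient $\Z^d/H_{n+1}^* \to \Z^d/H_n^*$ corresponds to the restriction of homomorphisms $\mathrm{Hom}(H_n^*, \Z) \to \mathrm{Hom}(H_{n+1}^*, \Z)$, which is well-defined precisely because $H_{n+1}^* \subseteq H_n^*$. Under the identification of the previous paragraph, restricting $g \mapsto \langle h, g \rangle$ from $H_n^*$ to $H_{n+1}^*$ is the homomorphism associated to the same element $h$, now viewed as an element of $H_{n+1}$. Hence the connecting maps in the direct system become the inclusions $H_n \hookrightarrow H_{n+1}$, and
\[
H^1(Y_H, \psi_H) \;\cong\; \lim_{n\to\infty} H_n \;=\; \bigcup_{n \geq 1} H_n \;=\; H.
\]

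The main obstacle is the naturality verification in the last step: one must track the isomorphisms supplied by Lemmas \ref{odom:14}, \ref{odco:2}, and \ref{odco:5} carefully enough to be sure that the pullback $\pi_n^*$ in the direct system of cohomology groups translates, after double-dual identification, to the bare inclusion $H_n \subseteq H_{n+1}$. Once this is done, the answer is independent of the chosen filtration $\{H_n\}$, as it must be for the statement to make sense. A secondary subtlety is the double-dual identity $(H_n^*)^* = H_n$, which is standard but worth invoking explicitly since the whole argument hinges on it.
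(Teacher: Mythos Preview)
Your proof is correct and follows essentially the same route as the paper's: filtering $H$ by finite-index extensions $H_n$, passing to the inverse limit of the finite systems $\Z^d/H_n^*$ via Theorem~\ref{odom:17}, and then using Lemmas~\ref{odco:2} and~\ref{odco:5} together with the lattice double-dual identification $\mathrm{Hom}(H_n^*,\Z)\cong H_n$ to recover $H$ as the direct limit. If anything, you are more explicit than the paper about the naturality of the connecting maps, which the paper simply asserts via a commutative diagram.
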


\begin{proof}
Select an increasing  sequence of finite-index extensions of $\Z^{d}$, 
\newline 
$H_{1}, H_{2}, \ldots$ with union $H$.
In view of Theorem \ref{odom:17}, it suffices for
 us to prove that 
$H \cong H^{1}(X_{\mathcal{G}}, \varphi_{\mathcal{G}})$,
 where 
$\mathcal{G} = \{ H_{1}^{*}, H_{2}^{*}, \ldots \}$.
Then we have a commutative diagram

\xymatrix{
H^{1}(\Z^{d}/H_{1}^{*}, \varphi_{H_{1}^{*}}) \ar[d]^{\alpha} \ar[r]
  &  H^{1}(\Z^{d}/H_{2}^{*}, \varphi_{H_{2}^{*}}) \ar[d]^{\alpha} \ar[r] 
     &  H^{1}(\Z^{d}/H_{3}^{*}, \varphi_{H_{3}^{*}}) \ar[d]^{\alpha} \ar[r]   & \cdots \\
Hom(H_{1}^{*}, \Z ) \ar[r] \ar[d] & 
Hom(H_{2}^{*}, \Z ) \ar[r] \ar[d]  & 
Hom(H_{3}^{*}, \Z ) \ar[r] \ar[d] & \cdots \\
H_{1} \ar[r]  & H_{2} \ar[r]  & H_{3} \ar[r] & \cdots }.

Each of the upper vertical maps is an isomorphism by 
Lemma \ref{odco:5}. Each of the lower vertical maps
is an isomorphism by simple duality. The 
limit of the first line is 
$H^{1}(X_{\mathcal{G}}, \varphi_{\mathcal{G}})$ by
 Lemma \ref{odco:2}, 
while the limit of the last line is $H$.
\end{proof}

While the next result undoubtedly holds for
all $d$, the proof is geometric and rather simpler
if we restrict to $d=1,2$. In fact, we will give the
proof only for $d=2$ sure that, having seen this, readers 
can easily supply the proof for $d=1$.

\begin{thm}
\label{odco:8}
Let $d = 1$ or $d=2$ and
let $\Z^{d} \subseteq H \subseteq \Q^{d}$.
Let $\mu$ be the unique invariant probability measure for the system
$(Y_{H}, \psi_{H})$.
Then the  map
\[
\tau_{\mu}^{1}: H^{1}(Y_{H}, \psi_{H}) 
\rightarrow H
\]
is an isomorphism.
\end{thm}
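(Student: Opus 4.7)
My plan is to lean on Theorem \ref{odco:7}, which already establishes an abstract isomorphism $H^1(Y_H, \psi_H) \cong H$, and then show that the composition of that isomorphism with $\tau^1_\mu$ is precisely the natural inclusion $H \hookrightarrow \R^d$. Once this is done, injectivity of $\tau^1_\mu$ and the identification of its image with $H$ both follow at once, which is what the theorem asserts.

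Fix an increasing sequence $\Z^d = H_1 \subseteq H_2 \subseteq \cdots$ with $\bigcup_n H_n = H$, so by Theorem \ref{odom:17} the odometer $(Y_H, \psi_H)$ is the inverse limit of the finite systems $(\Z^d/H_n^*, \varphi_{H_n^*})$. Given $h \in H_n$, the chain of maps used in the proof of Theorem \ref{odco:7} sends $h$ to the homomorphism $g \mapsto \langle h, g\rangle$ on $H_n^*$ via lattice duality, and then, via the explicit recipe in the last paragraph of the proof of Lemma \ref{odco:5} (after choosing coset representatives $k_1, \ldots, k_N$ for $H_n^*$ in $\Z^d$, where $N = [\Z^d : H_n^*]$), to an explicit $1$-cocycle $\theta$ on $\Z^d/H_n^*$. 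Pulling $\theta$ back along $\pi_n$ yields the cocycle on $Y_H$ to which I apply $\tau^1_\mu$.

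Because the pulled-back cocycle depends only on $\pi_n(x)$, the integral $\tau^1_\mu([\theta])(e_i)$ reduces, using the measure formula in Theorem \ref{odom:8}(4), to the average $\frac{1}{N}\sum_{j=1}^N \theta(k_j + H_n^*, e_i)$. The construction in Lemma \ref{odco:5} evaluates $\theta(k_j + H_n^*, e_i) = \langle h, g'_j\rangle$, where $g'_j \in H_n^*$ is defined by the unique decomposition $k_j + e_i = k_{\sigma(j)} + g'_j$ and $\sigma$ is a permutation of $\{1,\ldots,N\}$. Summing over $j$, the $k_j$ terms telescope because $\sigma$ is a bijection, which collapses the expression to $\langle h, e_i\rangle = h_i$. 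Hence $\tau^1_\mu([\theta]) = h$ in $\R^d$, as needed.

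The main obstacle is not this final computation but the bookkeeping of the identifications: one must verify that the isomorphism $H^1(Y_H, \psi_H) \cong H$ coming from the direct limit of $H_n \cong Hom(H_n^*, \Z) \cong H^1(\Z^d/H_n^*, \varphi_{H_n^*})$ in the proof of Theorem \ref{odco:7} is compatible with pulling cocycles back along $\pi_n$, so that the cohomology class "corresponding to $h$" is genuinely represented by the cocycle that is being integrated. Once this compatibility is in hand, the permutation/telescoping step is dimension-free, which suggests the restriction to $d = 1, 2$ in the statement is due to how the authors intend to visualise the fundamental-domain argument rather than to an essential analytic obstruction.
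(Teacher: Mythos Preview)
Your proof is correct and takes a genuinely different route from the paper's. The paper chooses a specific set of coset representatives for $H_n^*$ in $\Z^2$---namely the integer points in the fundamental parallelogram determined by a pair of generators $(a,b),(c,d)$ of $H_n^*$---and then computes $\tau_\mu^1(\theta)(e_i)$ geometrically by counting how many representatives $k$ have $k+e_i$ exiting the parallelogram through each boundary edge. This yields, e.g., $b$ contributions of $\langle -(c,d),h\rangle$ and $d$ contributions of $\langle (a,b),h\rangle$, which combine to $(ad-bc)\,h_i$ and divide out to $h_i$. Your argument instead works with an \emph{arbitrary} set of representatives and observes that the sum $\sum_j g'_j$, with $k_j+e_i=k_{\sigma(j)}+g'_j$, telescopes to $Ne_i$ because $\sigma$ is a permutation; this is cleaner, coordinate-free, and manifestly valid for all $d\geq 1$, confirming your suspicion that the restriction to $d=1,2$ in the statement reflects only the paper's geometric presentation. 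What the paper's argument buys is a concrete picture of the cocycle in terms of boundary crossings of a fundamental domain, which is natural from the tiling/dynamics viewpoint; what your argument buys is generality and brevity. The compatibility concern you flag---that the class corresponding to $h$ under the isomorphism of Theorem~\ref{odco:7} is represented by the pullback of the explicit $\theta$ from Lemma~\ref{odco:5}---is handled in the paper exactly as you indicate, via Lemma~\ref{odco:2} and the commutative diagram in the proof of Theorem~\ref{odco:7}.
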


\begin{proof}
Fix a sequence $H_{1}, H_{2} \ldots$ of finite-index 
extensions of $\Z^{2}$ with union  $H$.
Let $ n \geq 1$ and $h$ be in $H_{n}$. Let $G_{n} = H_{n}^{*}$ 
so that $H_{n} = Hom(G_{n}, \Z)$ via 
the inner product. Referring to the commutative 
diagram in the proof of Theorem \ref{odco:7}, 
$h$ determines the homomorphism $< \cdot , h>$ in
$Hom(G_{n}, \Z)$ which in turn determines a cocycle 
$\theta$ (unique up to coboundary)
in  $Z^{1}( \Z^{d}/ G_{n}, \varphi_{G_{n}})$ with 
$\alpha(\theta) = < \cdot , h>$. This means that
\[
\theta( G_{n}, g ) = \alpha(\theta)(g) =  < g,h>,
\]
for all $g$ in $G_{n}$. Our first task is to use the proof
of Lemma \ref{odco:5} to write $\theta$ explicitly.

This begins with the choice of 
$k_{i}, 1 \leq i \leq [\Z^{2}: G_{n}]$, which 
represent the cosets of $G_{n}$.
We may choose generators $(a,b), (c,d)$ of $G_{n}$ 
such that these lie in the first quadrant and 
the line through $(a,b)$ is below the line 
through $(c,d)$; that is,
$a,d > 0$, $b, c \geq 0$ and $ad - bc > 0$. Let 
$k_{i}$ be the points in the integer lattice which are also
in the parallelogram determined by $(a,b)$ and $(c,d)$; 
more precisely, let 
\[
F = \{ s (a,b) + t(c,d) \mid 0 \leq s, t < 1 \} \cap \Z^{2}.
\]
We then define $\theta$ as in the proof of \ref{odco:5}:
 for 
  $k_{1}, k_{2}$ in $F$ and $g$ in $G_{n}$,
  we set $\theta(k_{1} + G_{n}, k_{2} + g) = <g', h>$, 
where $k'$ in $F$ and $g'$ in $G_{n}$ are such that 
$k_{1} + k_{2} + g = k' + g'$.

With a slight abuse of notation, we
 consider the cocycle, again denoted $\theta$, in 
$Z^{1}(X_{\mathcal{G}}, \varphi_{\mathcal{G}})$  given 
by $\theta(x, k) = \theta(\pi_{n}(x), k)$, for
$x$ in $X_{\mathcal{G}}$ and $k$ in $\Z^{2}$.
It is the class of this $\theta$ that is mapped to $h$
under the isomorphism of Theorem  \ref{odco:7}.

It follows from the definition of 
$\tau_{\mu_{\mathcal{G}}}^{1}$ and the formula for
 $\mu_{\mathcal{G}}$ given in Theorem \ref{odom:8}
  that
 \[
 \tau_{\mu_{\mathcal{G}}}^{1}(\theta) 
 =  \sum_{k \in F} [\Z^{d}:G_{n}]^{-1}
  ( \theta(k + G_{n}, (1,0)), \theta(k + G_{n}, (0, 1))  )
  \]
First, we note that $[\Z^{d}:G_{n}] = ad  -bc$.
We compute the first entry of 
$ \tau_{\mu_{\mathcal{G}}}^{1}(\theta) $.
Let $k$ be in $F$. If $k + (1,0) = k'$ is also in $F$, then 
writing $k + (1,0) = k' + (0,0)$ means that 
$\theta(k + G_{n}, (1,0)) = <(0,0), h> = 0$.

Now let us write $k= (i, j)$.
If we fix $0 \leq j < b+d$, the values of 
$i$ for which $(i,j)$ are in $F$ form an interval, 
$i = i_{0}, \ldots, i_{1}$. If $i < i_{1}$, then 
$(i,j) + (1,0)$ is again in $F$ and 
$\theta((i,j) + G_{n}, (1,0)) =  0$. Let us now consider
$i= i_{1}$. If $0 \leq j < b$, then the point 
$(i_{1}, j) + (1,0)$ has moved out of $F$
 through its lower boundary, the line joining
 the origin and $(a,b)$. In this case we write
 $(i_{1}, j) + (1,0) = (i_{1}+1 + c,j+ d) - (c,d)$, 
 where $(i_{1}+1 + c, j+d)$ is in $F$ and $ - (c,d)$
 is in $G_{n}$. Hence, we have 
 \[
 \theta( (i_{1}, j) + G_{n}, (1,0)) = <-(c,d), h>.
 \]
 We note that there are exactly $b$ such 
 values of $(i_{1}, j)$. (This conclusion also holds in the 
 case $b=0$, which we leave to the reader.)
 
 If $ b \leq j < b+d$, then $(i_{1}, j) + (1, 0)$
 has moved out of $F$ through its right boundary and we
 write $(i_{1}, j) + (1,0) = (i_{1}+1 -a, j-b) + (a, b)$,
 with $(i_{1}+1 -a, j-b)$ in $F$ and $ (a, b)$ in $G_{n}$,
 and we have 
 \[
 \theta( (i_{1}, j) + G_{n}, (1,0)) = <(a,b), h>.
 \]
 There are exactly $d$ such values of $(i_{1},j)$.
  Altogether, we find the first entry of 
$  \tau_{\mu_{\mathcal{G}}}^{1}(\theta)$ is
\[
  (ad -bc)^{-1}\left(  b < -(c,d), h>  + d < (a,b), h> \right)
  =  < (1,0), h>.
  \]
  In a similar way, the second entry is $<(0,1), h>$ and so 
  we have 
  \[
   \tau_{\mu_{\mathcal{G}}}^{1}(\theta) = 
   (<(1,0),h>, <(0,1), h>) 
    = h.
    \]
    This completes the proof. 
\end{proof}

\section{Classification of $\Z^{d}$-odometers}

\begin{cor}
\label{class:10}
Let $\Z^{d} \subseteq H, H' \subseteq \Q^{d}$ be dense subgroups
of $\Q^{d}$..
\begin{enumerate}
\item
If $ d= 1$ or $d=2$, then the systems $(Y_{H}, \psi_{H})$ and 
$(Y_{H'}, \psi_{H'})$ are conjugate if and only 
if $H = H'$.
\item 
The systems $(Y_{H}, \psi_{H})$ and 
$(Y_{H'}, \psi_{H'})$ are isomorphic if and only 
if there exists  $\alpha$ in 
$GL_{2}(\Z)$ such that 
$\alpha H = H'$.
\end{enumerate}
\end{cor}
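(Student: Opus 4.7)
The plan is to assemble Theorem \ref{odco:8}, Proposition \ref{odom:25}, and the naturality of $H^{1}$ and of the unique invariant measure under conjugacy; once these are in hand, both parts follow cleanly.

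For the forward direction of (1), assume $d \in \{1, 2\}$ and suppose $h : (Y_{H}, \psi_{H}) \to (Y_{H'}, \psi_{H'})$ is a conjugacy. Pulling cocycles back by $h$ induces an isomorphism $h^{*} : H^{1}(Y_{H'}, \psi_{H'}) \to H^{1}(Y_{H}, \psi_{H})$ (a special instance of Proposition \ref{coho:2}). Let $\mu, \mu'$ be the respective unique invariant probability measures provided by Theorem \ref{odom:8}(4); then $h_{*}\mu = \mu'$ by the uniqueness clause in that theorem. A one-line change of variables, using that $h$ intertwines the \emph{same} generators $e_{1}, \ldots, e_{d}$ of $\Z^{d}$ on both sides, gives
\[
\tau_{\mu}^{1}(h^{*}[\theta']) \; = \; \tau_{\mu'}^{1}([\theta']),
\]
for every $[\theta'] \in H^{1}(Y_{H'}, \psi_{H'})$. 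By Theorem \ref{odco:8} the image of $\tau_{\mu}^{1}$ is exactly $H$ and that of $\tau_{\mu'}^{1}$ is exactly $H'$, so the displayed identity forces $H = H'$. The reverse implication is immediate, since equal groups define literally the same system.

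For (2), the forward direction is the first half of Proposition \ref{odom:25}. Conversely, if the two systems are isomorphic, the second half of Proposition \ref{odom:25} supplies an $\alpha \in GL_{d}(\Z)$ such that $(Y_{\alpha H}, \psi_{\alpha H})$ is conjugate to $(Y_{H'}, \psi_{H'})$. Since $\alpha H$ is again a dense subgroup of $\Q^{d}$ containing $\Z^{d}$, part (1) then yields $\alpha H = H'$.

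The main subtlety is the commutativity $\tau_{\mu}^{1} \circ h^{*} = \tau_{\mu'}^{1}$. It rests on two points: first, uniqueness of the invariant probability measure on an odometer forces $h_{*}\mu = \mu'$, so pulling a cocycle back and integrating is the same as integrating the original cocycle; second, $\tau_{\mu}^{1}$ is coordinate-dependent (see the remark following Definition \ref{coho:10}), and a conjugacy uses the same basis $e_{1}, \ldots, e_{d}$ of $\Z^{d}$ on both sides. It is exactly the failure of this second ingredient under a mere isomorphism that forces (2) to allow a $GL_{d}(\Z)$-twist and to be deduced from (1) only after routing through Proposition \ref{odom:25}.
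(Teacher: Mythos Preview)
Your proof is correct and follows essentially the same route as the paper: part (1) from Theorem \ref{odco:8} (the paper's citation of ``Theorem \ref{odom:8}'' is a typo for \ref{odco:8}), and part (2) from Proposition \ref{odom:25} combined with part (1). You have simply made explicit the naturality step $\tau_{\mu}^{1}\circ h^{*}=\tau_{\mu'}^{1}$ that the paper leaves to the reader; note also that, as written, your argument for (2) (like the paper's) uses part (1) and so is only established for $d\in\{1,2\}$, consistent with Theorem \ref{intro_25}.
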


\begin{proof}
The first part is an immediate consequence of Theorem \ref{odom:8}. 
The second part is an immediate consequence of Theorem \ref{odom:25} 
and the first part.
\end{proof}

\begin{ex}
\label{class:11}
We remark that the condition that $\alpha$ be in 
$GL_{2}(\Z)$ cannot be replaced 
by $\alpha$ in $SL_{2}(\Z)$. Consider 
$H = \Z[1/2] \oplus \Z[1/3] + \Z(1/5, 1/5)$ and $H' = \alpha H$, where 
$\alpha = \left[ \begin{array}{cc} 0 & 1 \\ 1 & 0 \end{array} \right]$.
We claim that if $\beta$ is any matrix in $GL_{2}(\Z)$ such that 
$\beta H = \alpha H$, then $\beta = \pm \alpha$ and hence there 
is no such $\beta$ in $SL_{2}(\Z)$. To see the claim, let
$K$ be the subgroup of $H$ consisting of all elements $h$ such that, for 
every $k \geq 1$, there is $h'$ in $G$ such that $2^{k}h' = h$. 
It is easy to see that $K = \Z[1/2] \oplus 0$. 
Similarly, we let $L$ be an analogous group, replacing $2$ by $3$, so 
that $L = 0 \oplus \Z[1/3]$. It is then clear that the only
$\beta$ in $GL_{2}(\Z)$ such that $\beta K = \alpha K$ and 
$\beta L = \alpha L$ are  
$\beta = \left[ \begin{array}{cc} 0 & \pm 1 \\ \pm 1 & 0 \end{array} \right]$. 
As we also require, $\beta (1/5, 1/5)$ to be in $\alpha H$, this leaves only
$\beta = \pm \alpha$.
\end{ex}

We now turn our attention to orbit equivalence
for $\Z^{d}$-odometers. 
The following result is also trivial; we state it simply
for emphasis.

\begin{lemma}
\label{class:13}
Let $G$ be a finite index subgroup 
of $\Z^{d}$. Let $\mu$ be the normalized counting measure on
$\Z^{d}/G$. Then 
$B(\Z^{d}/G, \varphi_{G}) = B_{m}(\Z^{d}/G, \varphi_{G})$
and 
\[
\tau_{\mu}: D(\Z^{d}/G, \varphi_{G}) = D_{m}(\Z^{d}/G, \varphi_{G}) 
\rightarrow [\Z^{d}: G] ^{-1} \Z
\]
is an isomorphism of ordered abelian groups with order units.
\end{lemma}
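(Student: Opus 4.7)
The proof plan for Lemma \ref{class:13} rests on the fact that $X = \Z^{d}/G$ is a finite set on which $\Z^{d}$ acts transitively (since any coset $k+G$ equals $\varphi_{G}^{k}(G)$), so the system is (uniquely ergodic and) minimal, with $\mu$ the unique $\varphi_{G}$-invariant probability measure. Hence $B_{m}(\Z^{d}/G, \varphi_{G})$ is precisely the kernel of the map $f \mapsto \sum_{k+G \in \Z^{d}/G} f(k+G)$, and $\tau_{\mu}$ on $C(\Z^{d}/G, \Z)$ becomes $f \mapsto [\Z^{d}:G]^{-1}\sum_{k+G} f(k+G)$.

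First I would show $B \subseteq B_{m}$ — this is immediate, since $\int (h - h\circ\varphi^{n})\, d\mu = 0$ by invariance of $\mu$. For the reverse inclusion, I fix the basepoint coset $G \in \Z^{d}/G$ and observe that for any $k \in \Z^{d}$, taking $h = \chi_{\{G\}}$ yields $h - h\circ\varphi_{G}^{-k} = \chi_{\{G\}} - \chi_{\{k+G\}} \in B(\Z^{d}/G, \varphi_{G})$. Any $f \in C(\Z^{d}/G, \Z)$ with $\sum_{k+G} f(k+G) = 0$ can then be rewritten as
\[
f \;=\; \sum_{k+G} f(k+G)\bigl(\chi_{\{k+G\}} - \chi_{\{G\}}\bigr),
\]
exhibiting $f$ as an integer combination of coboundaries. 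This gives $B_{m} \subseteq B$, and hence $B = B_{m}$ and $D = D_{m}$.

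Next, the induced map $\tau_{\mu}: D(\Z^{d}/G, \varphi_{G}) \to \R$ has image exactly $[\Z^{d}:G]^{-1}\Z$ (for instance, $\tau_{\mu}([\chi_{\{G\}}]) = [\Z^{d}:G]^{-1}$ generates the image), and the kernel on $C(\Z^{d}/G, \Z)$ is $B_{m} = B$, so $\tau_{\mu}$ is a group isomorphism from $D$ to $[\Z^{d}:G]^{-1}\Z$. For the order structure, if $[f] \in D$ satisfies $\tau_{\mu}([f]) = n[\Z^{d}:G]^{-1} \geq 0$, then $f$ and $n\chi_{\{G\}}$ have the same $\mu$-integral, hence differ by an element of $B_{m} = B$, so $[f] = [n\chi_{\{G\}}]$ is the class of a non-negative function. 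Combined with the obvious direction, this identifies the positive cones. Finally, $\tau_{\mu}$ sends the order unit $[1]$ to $1$, which is an order unit of $[\Z^{d}:G]^{-1}\Z$.

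There is no real obstacle here; the only point requiring any care is the explicit verification that every zero-sum function is a coboundary, which boils down to transitivity of the $\Z^{d}$-action on $\Z^{d}/G$. The result is essentially a finite-dimensional bookkeeping exercise, which is presumably why the authors state it "simply for emphasis."
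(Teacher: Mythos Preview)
Your proof is correct and complete. The paper itself gives no proof of this lemma, stating only that ``the following result is also trivial; we state it simply for emphasis,'' so your detailed verification (transitivity of the action, the explicit decomposition of zero-sum functions as integer combinations of the coboundaries $\chi_{\{G\}} - \chi_{\{k+G\}}$, and the identification of positive cones) is exactly the routine argument the authors had in mind but omitted.
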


\begin{thm}
\label{class:14}
Let $\Z^{d} \subseteq H \subseteq \Q^{d}$.
 Then 
$B(Y_{H}, \psi_{H}) = B_{m}(Y_{H}, \psi_{H})$
and  the map
\[
\tau_{\mu}^{d}: D(Y_{H}, \psi_{H}) 
\rightarrow  \cup_{m \in [[H: \Z^{d}]]} \, m^{-1} \Z
\]
is an isomorphism of ordered abelian groups with order units.
\end{thm}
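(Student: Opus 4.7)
The plan is to reduce the statement to the corresponding fact for the finite quotient systems $(\Z^{d}/G, \varphi_{G})$ established in Lemma \ref{class:13}, using the inverse-limit description of $(Y_{H}, \psi_{H})$ from Theorem \ref{odom:17} together with the fact (Lemma \ref{odco:2}) that the functor $D$ turns such an inverse limit into a direct limit. Concretely, choose an increasing sequence $H_{1} \subseteq H_{2} \subseteq \cdots$ of finite-index extensions of $\Z^{d}$ with union $H$ and set $G_{n} = H_{n}^{*}$, so that by Theorem \ref{odom:17} the system $(Y_{H}, \psi_{H})$ is conjugate to the inverse limit of $(\Z^{d}/G_{n}, \varphi_{G_{n}})$.

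First I would establish the isomorphism of ordered groups. By Lemma \ref{odco:2},
\[
D(Y_{H}, \psi_{H}) \;=\; \varinjlim_{n} D(\Z^{d}/G_{n}, \varphi_{G_{n}}),
\]
and by Lemma \ref{class:13} each $D(\Z^{d}/G_{n}, \varphi_{G_{n}})$ is identified via $\tau_{\mu}$ with the ordered group $[\Z^{d}:G_{n}]^{-1}\Z$ (with order unit $1$). The connecting maps $\pi_{n}^{*}$ correspond to the obvious inclusions of these ordered groups, because the push-forward of the unique invariant measure $\mu$ on $Y_{H}$ is the normalized counting measure on each $\Z^{d}/G_{n}$, so $\tau_{\mu}$ is compatible with the tower. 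Passing to the limit, one obtains $\tau_{\mu}^{d}: D(Y_{H}, \psi_{H}) \cong \bigcup_{n} [\Z^{d}:G_{n}]^{-1}\Z$ as ordered groups with order unit. By lattice duality $[\Z^{d}:G_{n}] = [\Z^{d}:H_{n}^{*}] = [H_{n}:\Z^{d}]$, and the arithmetic identity $\bigcup_{n} [H_{n}:\Z^{d}]^{-1}\Z = \bigcup_{m \in [[H:\Z^{d}]]} m^{-1}\Z$ then follows from Proposition \ref{odom:30} once one observes that $k \mid \ell$ if and only if $k^{-1}\Z \subseteq \ell^{-1}\Z$.

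For the identity $B(Y_{H}, \psi_{H}) = B_{m}(Y_{H}, \psi_{H})$, the inclusion $\subseteq$ is automatic from the definition. For the reverse, let $f \in B_{m}(Y_{H}, \psi_{H})$. Since $f$ is continuous and integer-valued on a totally disconnected inverse limit, a standard compactness argument produces $n$ and $g \in C(\Z^{d}/G_{n}, \Z)$ with $f = g \circ \pi_{n}$. Uniqueness of the invariant measure gives $\int g\, d\mu_{n} = \int f\, d\mu = 0$, so by Lemma \ref{class:13} we have $g \in B(\Z^{d}/G_{n}, \varphi_{G_{n}})$; writing $g$ as a finite sum of elementary coboundaries and pulling back by $\pi_{n}$ shows that $f \in B(Y_{H}, \psi_{H})$.

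The routine verifications — that the direct limit of ordered-group isomorphisms with order unit is itself an isomorphism of ordered groups with order unit, and the arithmetic comparison above — are not hard, but they are where one has to be careful. The only genuine subtlety I anticipate is the factorisation $f = g \circ \pi_{n}$ in the $B = B_{m}$ argument: one must confirm that every continuous $\Z$-valued function on the inverse limit is pulled back from some finite level, which is where the clopen basis of $Y_{H}$ coming from the $\pi_{n}$ is used.
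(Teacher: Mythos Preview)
Your proposal is correct and follows essentially the same approach as the paper: the paper's proof simply says ``exactly the same as that of Theorem \ref{odco:7}, with Lemma \ref{class:13} replacing Lemma \ref{odco:5},'' which is precisely the inverse-limit/direct-limit reduction you carry out via Theorem \ref{odom:17} and Lemma \ref{odco:2}. You supply more detail than the paper does (in particular the explicit $B = B_{m}$ argument and the appeal to Proposition \ref{odom:30} for the arithmetic identification of the target), but the underlying strategy is identical.
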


\begin{proof}
The proof is exactly the same as that of 
 Theorem \ref{odco:7}, with Lemma \ref{class:13} replacing 
  Lemma \ref{odco:5}. We omit the details.
\end{proof}

\begin{cor}
\label{class:16}
Let $\Z^{d} \subseteq H \subseteq \Q^{d}$ and 
 $\Z^{d'} \subseteq H' \subseteq \Q^{d'}$ be dense subgroups.
The systems $(Y_{H}, \psi_{H})$ and 
$(Y_{H'}, \psi_{H'})$ are orbit equivalent if and only \newline
if $[[H: \Z^{d}]] = [[H': \Z^{d'}]] $.
\end{cor}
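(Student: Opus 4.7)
The plan is to combine Theorem \ref{class:14} with the fact (cited from \cite{GMPS2} and recorded earlier in the paper) that, for free minimal Cantor $\Z^{d}$-systems, $D_{m}$ together with its positive cone and order unit is a complete invariant for orbit equivalence, even when the acting groups are allowed to be $\Z^{d}$ and $\Z^{d'}$ with $d \neq d'$ (orbit equivalence being purely a statement about the orbit equivalence relations). Since $H$ and $H'$ are assumed dense in $\Q^{d}$ and $\Q^{d'}$, Proposition \ref{odom:11} makes both actions free, minimality and the Cantor property follow from Theorem \ref{odom:8}, so the invariant applies on both sides. It therefore suffices to compare the ordered abelian groups with order units $D_{m}(Y_{H},\psi_{H})$ and $D_{m}(Y_{H'},\psi_{H'})$.

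By Theorem \ref{class:14}, these are isomorphic, respectively, to
\[
G_{H} := \bigcup_{m \in [[H:\Z^{d}]]} m^{-1}\Z, \qquad G_{H'} := \bigcup_{m \in [[H':\Z^{d'}]]} m^{-1}\Z,
\]
each viewed as a subgroup of $\Q$ with order unit $1$ and positive cone obtained by intersecting with $[0,\infty)$. That these are genuinely subgroups of $\Q$ uses Proposition \ref{odom:30}: the superindex is an increasing union (under divisibility) of divisor sets, hence is closed under divisors and under least common multiples, which translates to $G_{H}$ being closed under subtraction.

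Thus I reduce to showing that $(G_{H},1) \cong (G_{H'},1)$ as ordered abelian groups with order unit if and only if $[[H:\Z^{d}]] = [[H':\Z^{d'}]]$. The ``if'' direction is immediate, since equal superindices give literally equal subgroups of $\Q$. For the converse, the key observation is that any group isomorphism $\phi : G_{H} \to G_{H'}$ with $\phi(1) = 1$ is forced to be the inclusion back into $\Q$: additivity gives $\phi(n) = n$ for all $n \in \Z$, and for $q = a/b \in G_{H}$ with $a,b \in \Z$ the relation $b\phi(q) = \phi(a) = a$ yields $\phi(q) = q$. Consequently $G_{H} = G_{H'}$ as subsets of $\Q$, from which we recover
\[
[[H:\Z^{d}]] = \{ m \in \N \mid m^{-1} \in G_{H}\} = [[H':\Z^{d'}]].
\]
The main (and really only) obstacle is this last uniqueness argument; the positive cone is not even needed for it, since the additive structure inside $\Q$ already pins $\phi$ down.
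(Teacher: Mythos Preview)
Your argument is correct and follows the same route the paper intends: the corollary is stated without proof precisely because it is meant to be read off from Theorem \ref{class:14} together with the cited result of \cite{GMPS2} that $D_{m}$, as an ordered group with order unit, is a complete orbit-equivalence invariant. You have simply made explicit the one step the paper leaves to the reader, namely that two unital subgroups of $\Q$ containing $1$ are isomorphic as unital ordered groups if and only if they are equal, and that the superindex is recovered from this subgroup as $\{m\in\N\mid m^{-1}\in G_{H}\}$.
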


Notice that the condition above does not require $d = d'$.

We finally turn our attention to the issue of 
continuous orbit equivalence. Here, our main result
is Theorem \ref{class:25} below.  In fact, it is not difficult
to obtain Theorem \ref{class:25} from the results 
of \cite{CM}, but we give an independent direct proof. Even if 
the terminology is different, the proofs share many features.
We also direct the reader's attention 
to Theorem 1.2 of \cite{Li} where several other characterizations 
of continuous orbit equivalence are given.

The following preliminary result will be useful in the proof
and possibly of some interest on its own.

\begin{prop}
\label{class:20}
Assume that $K, H$ are groups with 
$\Z^{d} \subseteq K \subseteq H \subseteq \Q^{d}$, $[K:\Z^{d}]$ is finite 
 and that $H$ 
is dense in $\Q^{d}$. Let $\pi$ denote the quotient map
from $\widehat{H/ \Z^{d}}=Y_{H}$ to $\widehat{K / \Z^{d}} = Y_{K}$, 
$\mathcal{P}$ be the partition  of $Y_{H}$ into clopen sets 
formed by the pre-images of
the points of $Y_{K}$ under $\pi$ and $Z$ be the element of $\mathcal{P}$ 
containing the identity element of $Y_{H}$.
\begin{enumerate}
\item 
If $a_{1}, a_{2}, \ldots, a_{k}$ are representatives of the cosets 
of $K^{*}$ in $\Z^{d}$ (so that $k = [K: \Z^{d}]$), 
then $\mathcal{P}$ consists of the sets 
$\psi^{a_{i}}_{H}(Z), i = 1, \ldots, k$.
\item 
For any $z$ in $Z$ and $n$ in $\Z^{d}$, $\psi_{H}^{n}(z)$ is in 
$Z$ if and only if $n $ is in $K^{*}$. Moreover, 
$\{ \psi_{H}^{n}(z) \mid n \in K^{*} \}$ is dense in $Z$.
\item
If we choose an integer matrix $\alpha$ with $\alpha^{T} \Z^{d} = K^{*}$, then  
the system $(Z, \psi_{H}, K^{*} )$ is isomorphic to
$(Y_{\alpha H}, \psi_{\alpha H}, \Z^{d})$.
\end{enumerate}
\end{prop}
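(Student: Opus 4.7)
The plan is to reduce everything to Pontryagin duality, together with the lattice-duality identity $K = (K^*)^*$ for lattices containing $\Z^d$ in $\Q^d$. The first step is to identify $Z$ structurally. Since $\pi$ is dual to the inclusion $K/\Z^d \hookrightarrow H/\Z^d$, it is a continuous surjective group homomorphism; because $Y_K$ is discrete of order $k = [K:\Z^d]$ (Proposition \ref{odom:11}), the kernel $Z$ is a clopen subgroup of $Y_H$, and $\mathcal{P}$ consists precisely of the $k$ cosets of $Z$ in $Y_H$. The short exact sequence $0 \to K/\Z^d \to H/\Z^d \to H/K \to 0$ dualizes to give $Z \cong \widehat{H/K}$, which I keep in reserve for part (3).

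For (1) and the equivalence in (2), I compute directly from $\widehat{\rho}(n)(h + \Z^d) = e^{2 \pi i <h, n>}$ that $\widehat{\rho}(n) \in Z$ iff $<k, n> \in \Z$ for every $k \in K$, that is, iff $n \in K^*$. The lattice-duality identity $[\Z^d : K^*] = [K : \Z^d] = k$ then shows that $\widehat{\rho}(a_1), \ldots, \widehat{\rho}(a_k)$ are $k$ distinct coset representatives for $Z$ in $Y_H$; since there are exactly $k$ cosets, these exhaust $\mathcal{P}$, proving (1). The first assertion of (2) is immediate: $\psi_H^n(z) = z + \widehat{\rho}(n)$ lies in $Z$ iff $\widehat{\rho}(n) \in Z$ iff $n \in K^*$.

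For (3), the hypothesis $\alpha^T \Z^d = K^*$ dualizes to $\alpha K = \Z^d$, so the linear map $\alpha$ induces a group isomorphism $H/K \to \alpha H/\Z^d$, $h + K \mapsto \alpha h + \Z^d$. Taking Pontryagin duals yields a homeomorphism $\Phi : Y_{\alpha H} \to \widehat{H/K} \cong Z$, and the identity $<\alpha h, n> = <h, \alpha^T n>$ gives the equivariance $\Phi \circ \psi_{\alpha H}^n = \psi_H^{\alpha^T n} \circ \Phi$ for every $n \in \Z^d$. Thus the pair $(\Phi, \alpha^T)$ is the desired isomorphism from $(Y_{\alpha H}, \psi_{\alpha H}, \Z^d)$ to $(Z, \psi_H, K^*)$. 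The density assertion remaining in (2) then follows by transferring minimality of the $\Z^d$-odometer $(Y_{\alpha H}, \psi_{\alpha H})$ (Theorem \ref{odom:8}(2)) across this isomorphism. The only real obstacle is bookkeeping among the three dualities in play (lattice dual, Pontryagin dual, and matrix transpose); once the translations are aligned, each step reduces to a short formal verification.
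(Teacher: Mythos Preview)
Your proof is correct and follows essentially the same route as the paper: both arguments dualize the short exact sequence $0 \to K/\Z^{d} \to H/\Z^{d} \to H/K \to 0$ to identify $Z$ with $\widehat{H/K}$, and both prove part (3) by observing that $\alpha$ induces an isomorphism $H/K \cong \alpha H/\Z^{d}$ and then taking Pontryagin duals. The only noteworthy difference is in the density clause of (2): the paper argues directly from minimality of the full $\Z^{d}$-action on $Y_{H}$ together with the fact that $Z$ is clopen, whereas you deduce it by transporting minimality of $(Y_{\alpha H},\psi_{\alpha H})$ through the isomorphism of part (3); both are valid, though the paper's version keeps (2) logically independent of (3).
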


\begin{proof}
Consider the following commutative diagram: 

\hspace{1cm}
 \xymatrix{0 \ar[r] &  K/ \Z^{d} \ar^{i}[r] \ar[d] & H / \Z^{d} \ar^{p}[r] \ar[d] & 
                       H / K \ar[r] \ar[d] & 0 \\
 0 \ar[r] &  K/ \Z^{d} \ar[r]  & \R^{d} / \Z^{d} \ar^{q}[r]  & 
                       \R^{d} / K \ar[r]  & 0      }

If we take Pontryagin duals throughout, the first line gives an exact sequence with $\hat{i} = \pi$.
 It is straightforward to calculate that, after identifying the dual of $\R^{d}/ \Z^{d}$ with $\Z^{d}$, the 
 image of $\hat{q}$ is exactly $K^{*}$. 
 The first part follows at once, as does the first sentence of part 2.
 For the second sentence of part 2, we know that the orbit of $z$ is dense in $Y_{H}$. On the other hand, 
 points of the form $\psi_{H}^{n}(z)$ with $z \notin K^{*}$ 
 do not lie in $Z$ and as $Z$ is clopen, such points cannot 
 limit on points in $Z$. 
 
 For the last statement, we first note that $\alpha^{T} \Z^{d} = K^{*}$ implies that 
 $K = \alpha^{-1} \Z^{d}$. We consider the following commutative diagram
 
 \hspace{3cm}
 \xymatrix{  H / K \ar^{\alpha}[r] \ar[d] & \alpha H / \Z^{d} \ar[d] \\
                       \R^{d} / K \ar^{\alpha}[r]  &     \R^{d} /  \Z^{d}   }
                       
Of course, the horizontal maps are isomorphisms. Taking Pontryagin duals yields the last part. 
\end{proof}

\begin{thm}
\label{class:25}
Let $\Z^{d} \subseteq H \subseteq \Q^{d}$ and 
$\Z^{d'} \subseteq H' \subseteq \Q^{d'}$ be dense subgroups.
If 
$d=d'$ and there is $\alpha$ in $GL_{d}(\Q)$ with $det(\alpha) = \pm 1$
 such that 
$\alpha H = H'$, then the odometers $(Y_{H}, \psi_{H})$ and $(Y_{H'}, \psi_{H'})$
are continuously orbit equivalent. The converse holds in the case $d=1,2$. 
\end{thm}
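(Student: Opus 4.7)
The plan is to handle the two directions separately, both through the reduction to finite-index sub-odometers supplied by Proposition~\ref{class:20}. For the forward direction, suppose $\alpha \in GL_d(\Q)$ has $\det\alpha = \pm 1$ and $\alpha H = H'$. I would form the finite-index extensions $K := \Z^d + \alpha^{-1}\Z^d \subseteq H$ and $K' := \alpha K = \Z^d + \alpha\Z^d \subseteq H'$; the covolume identity $\operatorname{covol}(\alpha K) = |\det\alpha|\,\operatorname{covol}(K)$ combined with $\det\alpha = \pm 1$ forces $[K:\Z^d] = [K':\Z^d]$. Fix an integer matrix $\beta$ with $\beta^T\Z^d = K^*$, equivalently $\beta^{-1}\Z^d = K$, and set $\beta' := \beta\alpha^{-1}$; since $\alpha^{-1}\Z^d \subseteq K = \beta^{-1}\Z^d$, the matrix $\beta'$ is also integer, and a direct check gives $\beta'^T\Z^d = K'^*$ together with the crucial identity $\beta'H' = \beta\alpha^{-1}(\alpha H) = \beta H$. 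Proposition~\ref{class:20}(3) then identifies $(Z,\psi_H,K^*)$ with $(Y_{\beta H},\psi_{\beta H})$ and $(Z',\psi_{H'},K'^*)$ with $(Y_{\beta'H'},\psi_{\beta'H'}) = (Y_{\beta H},\psi_{\beta H})$, so the two sub-odometers are conjugate via some homeomorphism $\phi\colon Z \to Z'$. Fixing coset representatives $a_1,\ldots,a_k \in \Z^d$ of $K^*$ and $b_1,\ldots,b_k \in \Z^d$ of $K'^*$, extend $\phi$ by $h(\psi_H^{a_i}(z)) := \psi_{H'}^{b_i}(\phi(z))$. Writing $n + a_i = a_j + m$ with $m \in K^*$, one computes $h(\psi_H^n(y)) = \psi_{H'}^{b_j + (\alpha^T)^{-1}m - b_i}(h(y))$ for $y \in \psi_H^{a_i}(Z)$; the cocycle value $b_j + (\alpha^T)^{-1}m - b_i$ lies in $\Z^d$ (since $(\alpha^T)^{-1}m \in K'^* \subseteq \Z^d$) and is locally constant in $y$, so $h$ is a continuous orbit equivalence.

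For the converse, with $d,d' \in \{1,2\}$, let $h$ be a continuous orbit equivalence with continuous cocycles $\alpha,\beta$. The strategy is to reverse the forward construction. Continuity and integer-valuedness of each $\alpha(\,\cdot\,,e_i)$ and each $\beta(\,\cdot\,,e_j')$ force these to be locally constant with finitely many values, so by taking finite-index extensions $K\subseteq H$ and $K'\subseteq H'$ of sufficiently high level, one can arrange simultaneously that both compositions $\pi_{K'}\circ h$ and $\pi_K \circ h^{-1}$ factor through the respective projections on the other side. Since $h_*\mu = \mu'$ by uniqueness of the invariant measures, the induced maps on cosets have the same finite cardinality $[K:\Z^d] = [K':\Z^{d'}]$ and are therefore bijections, so $h$ identifies the $K^*$-partition of $Y_H$ with the $K'^*$-partition of $Y_{H'}$, with central pieces $Z$ and $Z'$ corresponding. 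The restriction $h|_Z$ then implements an isomorphism of the induced sub-odometers $(Z,\psi_H,K^*)$ and $(Z',\psi_{H'},K'^*)$; equality of ranks $d = d'$ follows because the acting groups $K^*$ and $K'^*$ are isomorphic. By Proposition~\ref{class:20}(3), the two sub-odometers are identified with $(Y_{\beta H},\psi_{\beta H})$ and $(Y_{\beta'H'},\psi_{\beta'H'})$ for integer matrices with $|\det\beta| = [K:\Z^d] = [K':\Z^{d'}] = |\det\beta'|$. Corollary~\ref{class:10}(2), available because $d = d' \in \{1,2\}$, then produces $\gamma \in GL_d(\Z)$ with $\gamma\beta H = \beta'H'$. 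Setting $\alpha := (\beta')^{-1}\gamma\beta \in GL_d(\Q)$ yields $\alpha H = H'$, and $|\det\alpha| = |\det\gamma|\,|\det\beta|/|\det\beta'| = 1$.

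The main obstacle lies in the partition-matching step of the converse: arranging that $h$ is a genuine bijection between the $K^*$- and $K'^*$-partitions, rather than merely mapping each $K^*$-piece into a refinement on the other side. This hinges on iterating the refinement process for $h$ and $h^{-1}$ in tandem, together with the measure equality $h_*\mu = \mu'$ that enforces the index identity. Once the partitions match, the remaining steps are essentially dictated: the central restriction becomes an isomorphism of induced sub-odometers, and Corollary~\ref{class:10}(2) converts this into the desired $GL_d(\Q)$ matrix of determinant $\pm 1$ in dimensions $\leq 2$.
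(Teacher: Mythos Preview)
Your proposal follows the same overall strategy as the paper: reduce via Proposition~\ref{class:20} to the induced sub-odometers on the central pieces $Z$ and $Z'$, then invoke Corollary~\ref{class:10} to produce the required matrix in $GL_{d}(\Q)$ of determinant~$\pm 1$. Two points differ. First, the paper obtains $d=d'$ at the outset from the fact that cohomology is a continuous orbit equivalence invariant (Proposition~\ref{coho:2}) together with $H^{d}(Y_{H},\psi_{H})\neq 0$ and $H^{k}=0$ for $k>d$; your rank argument via $K^{*}\cong K'^{*}$ is more elementary and also works, but only becomes available once the sub-odometer isomorphism is in hand. Second, and more to the point of the obstacle you flag, the paper sidesteps the symmetric partition-matching entirely: after normalizing $h(0)=0$ and choosing $K$ so that each $\alpha(\cdot,e_{i})$ is constant on $K^{*}$-blocks, it \emph{defines} $K':=(\alpha(K^{*}))^{*}$ directly from the cocycle image and then verifies $K'\subseteq H'$ by finding $H'_{m}$ with $(\pi'_{m})^{-1}\{0\}\subseteq h(Z)$. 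With this asymmetric choice, $h(Z)=Z'$ and the bijection of partitions (hence the index equality) follow immediately from $h$ being a homeomorphism, so no back-and-forth refinement or measure-preservation argument is needed.
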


\begin{proof}
First, let us assume that the systems are continuously orbit 
equivalent. We have noted in Proposition \ref{coho:2} that continuous orbit 
equivalence implies isomorphism of cohomology groups, that
$H^{d}(Y_{H}, \psi_{H}) \cong D(Y_{H}, \psi_{H})$ is non-trivial
and that $H^{k}(X, \varphi)=0$, for any $k > d$ and any 
$\Z^{d}$-action. We conclude from this that $d=d'$, being the largest 
integer with non-trivial cohomology in that degree.

Now let $h: Y_{H} \rightarrow Y_{H'}$ be the homeomorphism
and $\alpha: H \times \Z^{d} \rightarrow \Z^{d}$ be the associated cocycle
as in the definition of continuous orbit equivalence.
By following $h$ by rotation by $-h(0)$, we may assume that $h(0) = 0$
in the group $Y_{H'}$.
Also let $H_{n}, n \geq 1$ be an increasing sequence
of finite index extensions of $\Z^{d}$ with union $H$.  
This means that $Y_{H}$ is (up to isomorphism) an inverse limit of
finite spaces, $\Z^{d} / H_{n}^{*}$. Recall that we let
$\pi_{n}$ denote the map from $Y_{H}$ to $\Z^{d}/H_{n}^{*}$.
For each $i = 1, 2, \ldots, d$, the function $\alpha( \cdot, e_{i})$
is continuous and takes values in $\Z^{d}$. Hence it is locally
 constant and 
we may find $n$ such that each function 
$\alpha( \cdot, e_{i}), i = 1, \ldots, d$
is constant on 
the partition induced by $\pi_{n}$. 

For convenience, we let $K = H_{n}$, $\pi: Y_{H} \rightarrow Y_{K}$
be the quotient map and the partition $\mathcal{P}$ be as in 
Proposition \ref{class:20}. 
It follows from the cocycle property of $\alpha$ and the fact that 
$\psi_{H}$ simply permutes the elements of $\mathcal{P}$ that 
$\alpha( \cdot , m)$ is constant on each element
of the partition for every $m$ in $\Z^{d}$. 
Define $\alpha: K^{*} \rightarrow \Z^{d}$ by 
$\alpha(n) = \alpha(0_{H}, n), n \in K^{*}$. 
It is then clear from the cocycle condition and our choice of $Z$ 
in Proposition \ref{class:20} that 
$ \alpha(K^{*})$ is a subgroup of $\Z^{d}$.

It follows from Proposition
\ref{class:20} that 
\[
K^{*} = \{ k \in \Z^{d} \mid \psi_{H}^{k}(0_{H}) \in
 Z \}.
 \]
 Then as $h$ is an orbit equivalence, we have 
 \[
 \alpha (K^{*}) =  \{ l \in \Z^{d} \mid \psi_{H'}^{l}(0_{H'}) \in
h( Z) \}.
\]

We claim that $K' = (\alpha(K^{*})^{*}$ is actually a subgroup of $H'$.
Now let $H'_{m}$ be an increasing sequence of finite index extensions 
of $\Z^{d}$ with union $H'$ and let $\pi'_{m}$ be the natural 
map from $Y_{H'}$ to $Y_{H'_{m}}$.
The clopen sets $(\pi'_{m})^{-1} \{ 0_{H'_{m}} \}, m \geq 1$ 
form a decreasing system
of sets which intersect to the identity in $Y_{H'}$. It follows
that there exists some $m$ such that  $\pi'_{m}( H'_{m})$ 
is contained in 
$h(Z)$. 
With another application of Proposition
\ref{class:20}, we have 
\begin{eqnarray*}
H_{m}'^{*}  &  =  &  \left\{ l \in \Z^{d} \mid 
\psi_{H'}^{l}(0_{H'}) \in \pi'^{-1}\{ 0_{H_{m}'} \}  \right\} \\
  &  \subseteq & \left\{ l \in \Z^{d} \mid \psi_{H'}^{l}(0_{H'}) \in
h( \pi^{-1}\{ 0_{H_{m}'} \})  \right\} \\
   &  =  &  \alpha(K^{*}).
   \end{eqnarray*}
   It follows that 
   \[
   H' \supseteq H'_{m}  \supseteq (\alpha K^{*})^{*} = K'.
   \]
   as desired. For convenience, let  $\pi': Y_{H'} \rightarrow Y_{K'}$.
   Let $Z' $ be the pre-image in $Y_{H'}$ of the identity of $Y_{K'}$ 
   under $\pi'$.
   
   It follows from two applications of the second part of 
   Proposition \ref{class:20} that  
$   h(Z) = Z'$
   and that $h$ with $\alpha$ are an isomorphism 
   between the systems $(Z, K^{*}, \psi_{H})$
   and $(Z', K'^{*}, \psi_{H'})$.
   In addition, $h$ bijectively 
   maps the partition of $Y_{H}$ into 
   the elements of the partition induced by $\pi'$
   of $Y_{H'}$. From this we conclude that each partition 
   has the same number of elements
    and so $[\Z^{d}: (K')^{*}] = [\Z^{d}: K^{*} ]$.
    
    Now choose integer matrices $\alpha$ and $\alpha'$ such that 
    $K^{*} = \alpha^{T} \Z^{d}$ and $(K')^{*} = (\alpha')^{T} \Z^{d}$.  
   We have
    $\vert det(\alpha)\vert = [K:\Z^{d}] = 
     [K':\Z^{d}] = \vert det(\alpha') \vert$. 
   It follows from what we have above and the last part of \ref{class:20} that 
   $(Y_{\alpha H}, \psi_{\alpha H})$ and 
   $(Y_{\alpha' H'}, \psi_{\alpha' H'})$ are isomorphic.
   Therefore, by \ref{class:10} that there is $\beta$ in $GL_{d}(\Z)$ with 
   $\beta \alpha H = \alpha' H'$. Hence, 
   $(\alpha')^{-1} \beta \alpha H = H'$ is a
    matrix with rational entries and    
   \[
   \vert det((\alpha')^{-1} \beta \alpha)\vert =
    \vert det(\alpha')^{-1}\vert
     \vert det(\beta) \vert \vert det (\alpha) \vert
    = 1.
    \]
    
For the converse, suppose that there is a
 matrix $\alpha$  with rational entries and determinant $\pm 1$ such 
that $\alpha H = H'$.  The group $H$ is the
 union of subgroups $K$ with $[K:\Z^{d}]$ finite. 
For each element of $H'$, we may find a $K$ such 
that $\alpha K$ contains that element. As $\Z^{d}$ is finitely generated,
we may find $\Z^{d} \subseteq K \subseteq H$ with $[K: \Z^{d}]$ finite and 
$\Z^{d} \subseteq \alpha K $. Find a positive
 integer $n$ such that $n \alpha$ has only integer entries.
Then we have 
\begin{eqnarray*}
[K: \Z^{d} ] & = & [\Z^{d}: K^{*} ]\\
    & = &   [\Z^{d}: n K^{*} ] [ K^{*}: nK^{*} ]^{-1}\\
   &  =  & [ \Z^{d}: n \alpha^{T} K'^{*}] n^{-d} \\
   & = & \vert det(n \alpha) \vert  [\Z^{d}:  K'^{*} ] n^{-d} \\
    & = &   [\Z^{d}:  K'^{*} ] \\
      &  =  & [K': \Z^{d}] 
    \end{eqnarray*}
    
 Using the fact above, we let
  $a_{1}, a_{2}, \ldots, a_{m}$  and
   $a'_{1}, a'_{2}, \ldots, a'_{m}$  be distinct representatives of the cosets 
 of $K^{*}$  and $K'$, respectively, in $\Z^{d}$.
The map $\alpha$ evidently induces a homeomorphism
 which we denote by $h$ between $Z = \widehat{H/K}$ and 
$\widehat{H'/K'}$, which together with $(\alpha^{T})^{-1}$, 
provide an isomorphism between
$(Z, \psi_{H}, K^{*})$ and $(Z', \psi_{H'}, K'^{*})$.
 We extend $h$ to all of $Y_{H}$ by setting
\newline
$h \mid \psi_{H}^{a_{i}}(Z) = 
\psi_{H'}^{a'_{i}} \circ h \circ \psi_{H}^{-a_{i}}$. 
It follows from 
Corollary \ref{class:10} that 
$h$ is an orbit equivalence between 
$(Y_{H}, \psi_{H})$ and $(Y_{H'}, \psi_{H'})$ and it is an easy matter to see
that the associated cocycles are continuous. We omit the details.
\end{proof}

\begin{rem}
\label{class:26}
We leave it as an exercise to show that, for $H$ and $H'$ as in 
Example \ref{class:11}, the only $\beta $ in $GL_{2}(\Q)$ with 
such that 
$\beta H = H'$ and $\det(\beta) = \pm 1$ is $\beta = \pm \alpha$. 
This implies that we cannot
change the condition $\det(\alpha) = \pm 1$ to 
$\det(\alpha) =1$ above.
\end{rem}

These results on classification lead to a 
surprising dichotomy between the cases $d = 1$ and $d = 2$,
which essentially stems from the fact that if a subgroup
of the rationals
is a finite index extension of the integers, then
 the index uniquely 
determines the group.

\begin{cor}
\label{class:30}
Let $\Z \subseteq H, H' \subseteq \Q$ be dense. The following are equivalent.
\begin{enumerate}
\item  The $\Z$-odometers $(Y_{H}, \psi_{H})$ and $(Y_{H'}, \psi_{H'})$ 
are conjugate.
\item The $\Z$-odometers $(Y_{H}, \psi_{H})$ and $(Y_{H'}, \psi_{H'})$ 
are  isomorphic.
\item The $\Z$-odometers $(Y_{H}, \psi_{H})$ and $(Y_{H'}, \psi_{H'})$ 
are continuously orbit equivalent.
\item The $\Z$-odometers $(Y_{H}, \psi_{H})$ and $(Y_{H'}, \psi_{H'})$ 
are orbit equivalent.
\end{enumerate}
\end{cor}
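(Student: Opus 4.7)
The implications $(1) \Rightarrow (2) \Rightarrow (3) \Rightarrow (4)$ were already noted in the introduction, so the only task is to prove $(4) \Rightarrow (1)$. My plan is to exploit a structural fact that is special to $d=1$: a subgroup of $\Q$ containing $\Z$ as a finite-index subgroup is uniquely determined by that index. Given this, $H$ can be recovered from its superindex, and the equivalence collapses.

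More precisely, the first step is to observe that if $\Z \subseteq K \subseteq \Q$ with $[K:\Z] = k$ finite, then $K/\Z$ is a subgroup of $\Q/\Z$ of order $k$. Since $\Q/\Z \cong \bigoplus_{p} \Z[1/p^{\infty}]/\Z$ and each $p$-primary summand has a unique subgroup of each order $p^{j}$, the group $\Q/\Z$ itself has a unique subgroup of order $k$, namely $\tfrac{1}{k}\Z/\Z$. Hence $K = \tfrac{1}{k}\Z$. I consider this the only nontrivial ingredient, and it is entirely elementary.

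Combining this uniqueness with Proposition \ref{odom:30}, every finite-index extension of $\Z$ inside $H$ has the form $\tfrac{1}{k}\Z$ for some $k \in [[H:\Z]]$, and conversely $\tfrac{1}{k}\Z \subseteq H$ for every such $k$. Therefore
\[
H \;=\; \bigcup_{k \in [[H:\Z]]} \tfrac{1}{k}\Z,
\]
so $H$ is determined by its superindex. Now assume $(Y_{H}, \psi_{H})$ and $(Y_{H'}, \psi_{H'})$ are orbit equivalent. By Corollary \ref{class:16} we have $[[H:\Z]] = [[H':\Z]]$, whence $H = H'$, and by Corollary \ref{class:10}(1) the two odometers are conjugate.

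There is no real obstacle here; the point is simply that the uniqueness of finite-index overgroups of $\Z$ in $\Q$ fails already for $d=2$, as Example \ref{class:11} illustrates, and that is precisely why this dichotomy is special to the one-dimensional case. Example \ref{class:35} below will show that for $d \geq 2$ the four notions genuinely separate.
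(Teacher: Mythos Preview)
Your proof is correct and takes a slightly different, more elementary route than the paper's. The paper argues cohomologically: the conjugacy invariant is $\tau_{\mu}^{1}(H^{1}(Y_{H},\psi_{H})) = H$ (Theorem~\ref{odco:8} and Corollary~\ref{class:10}), the orbit-equivalence invariant is $\tau_{\mu}(D(Y_{H},\psi_{H})) = \bigcup_{m\in[[H:\Z]]} m^{-1}\Z$ (Theorem~\ref{class:14} and \cite{GPS1}), and for $d=1$ these coincide because $D(Y_{H},\psi_{H}) = H^{1}(Y_{H},\psi_{H})$. You instead prove the underlying set-theoretic identity $H = \bigcup_{k\in[[H:\Z]]} k^{-1}\Z$ directly from the uniqueness of finite cyclic subgroups in $\Q/\Z$, bypassing the cohomological identification entirely. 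Your argument is self-contained and makes the ``index determines the subgroup'' phenomenon explicit, which is pedagogically nicer; the paper's version has the virtue of highlighting that it is precisely the collapse $H^{1}=H^{d}$ at $d=1$ that drives the result, tying it to the broader cohomological theme. One minor point: your citation of Proposition~\ref{odom:30} is not actually needed --- the definition of the superindex already gives you what you use --- but it does no harm.
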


\begin{proof}
This is well-known, but we will give a 
proof here since it is quite short. The implications 
$(1) \Rightarrow (2) \Rightarrow (3) \Rightarrow (4)$ are all 
trivial. We must prove $(4) \Rightarrow (1)$.
From \ref{class:10}, we know that $\tau^{1}(H^{1}(Y_{H}, \psi_{H}))$
is a complete invariant for conjugacy, while
 from \cite{GPS1} and Theorem \ref{class:14}, 
$\tau^{d}(D(Y_{h}, \psi))$ is a complete invariant for orbit equivalence. 
As we stated earlier, 
$D(Y_{H}, \psi_{H}) = H^{1}(Y_{H}, \psi_{H})$. This completes the proof.
 \end{proof}

Let us take this opportunity to make some 
 vague comments 
 prompted by the last 
 corollary. The first, very briefly, is that  for 
 $\Z^{d}$-odometers,  if $H_{n}, n \geq 1,$ 
 is an increasing sequence of finite-index extensions
 of $\Z^{d}$ with union $H$,  the invariant  
 $H^{1}(Y_{H}, \psi_{H})$ 'remembers' the 
 \emph{groups}
 $H_{n}/\Z^{d}$, whereas 
 the invariant  
 $H^{d}(Y_{H}, \psi_{H})$ 'remembers' the 
 \emph{sets}
 $H_{n}/\Z^{d}$. Secondly, it is well-known 
 that for general minimal, free $\Z^{d}$-actions
 on Cantor sets, the invariant 
 $H^{d}(X, \varphi)$ provides the 
 invariant for orbit equivalence. On the other 
 hand, the role of $H^{1}(X, \varphi)$ is not 
 well-understood. The fact that they coincide when $d=1$
 tends to confuse the issue, which is nicely illustrated in the 
 last corollary.
 
The situation is remarkably different for $d=2$.
We list these examples 
for completeness, but leave the reader the easy
task of verifying them. The first example is already
 in the work of Cortez \cite{Co}. The second and third 
 already appear in Li \cite{Li}. 
 The fourth example is already well-known since \cite{GMPS2}.
 
\begin{ex}
\label{class:35}
\begin{enumerate}
\item Let $H = \Z[1/2] \oplus \Z[1/3]$ and 
 $H' = \Z[1/3] \oplus \Z[1/2]$. Then the 
 $\Z^{2}$-odometers
 $(Y_{H}, \psi_{H}) $ and  $(Y_{H'}, \psi_{H'}) $ are 
 isomorphic (using  $\alpha = \left[ \begin{array}{cc} 0 & 1 \\ 1 & 0 \end{array} \right]$), but not conjugate.
 \item Let $H = \Z[1/2] \oplus 5^{-1}\Z[1/3]$ and 
 $H' = 5^{-1}\Z[1/2] \oplus \Z[1/3]$. Then the 
 $\Z^{2}$-odometers
 $(Y_{H}, \psi_{H}) $ and  $(Y_{H'}, \psi_{H'}) $ are 
 continuously orbit equivalent  (using 
  $\alpha = \left[ \begin{array}{cc} \frac{1}{5} & 0 \\ 0 & 5 \end{array} \right]$), but not isomorphic.
 \item Let $H = \Z[1/2] \oplus \Z[1/15]$ and 
 $H' = \Z[1/10] \oplus \Z[1/3]$. Then the 
 $\Z^{2}$-odometers
 $(Y_{H}, \psi_{H}) $ and  $(Y_{H'}, \psi_{H'}) $ are 
 orbit equivalent, but not continuously orbit equivalent.
 \item Let $\Z^{d} \subseteq H \subseteq \Q^{d}$ be any 
 dense subgroup with $d > 1$. Let 
\[
 H'  = \cup_{m \in [[H: \Z^{d} ]]} m^{-1} \Z,
 \]
 so that $\Z \subseteq H' \subseteq \Q$ is dense. 
 Then $(Y_{H}, \psi_{H}) $ and  $(Y_{H'}, \psi_{H'}) $ are 
 orbit equivalent, but not continuously orbit equivalent.
\end{enumerate}
\end{ex}

One slightly unfortunate consequence of these examples 
 is that 
it may leave the reader with the impression that dense subgroups of
$\Q^{2}$ are mainly obtained by taking direct sums. To allay that, let
us first mention an example due to Fuchs \cite{Fu}
 of a group (already appearing in
\ref{class:11})
$\Z^{2} \subseteq H \subseteq \Q^{2}$ which cannot be written as
an internal direct product in a non-trivial way; that is, the group is 
indecomposable:
\[
H = 
\Z[1/2] \oplus \Z[1/3] + \Z(1/5, 1/5).
\]

Unfortunately, this may still leave the reader with the impression that 
dense subgroups of $\Q^{2}$ are direct sums, up to a finite-index subgroup.
We present an example of a class which seem to be substantially further
from direct sums. 
We have not been able to find in the literature an example of this type.

\begin{prop}
\label{}
There exists a group $\Z^{2} \subseteq H \subseteq \Q^{2}$ such that, 
$H$ is dense in $\Q^{2}$ and,
for every $x$ in $\Z^{2}$, the group
$\Q x \cap H $ is cyclic.
\end{prop}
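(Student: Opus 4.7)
The plan is to construct $H$ explicitly as $H = \Z^{2} + \sum_{k \geq 1} \Z \, v_{k}$, where $v_{k} = x_{k}/p_{k}$, the sequence $\{x_{k}\}_{k \geq 1}$ is an enumeration of the primitive vectors in $\Z^{2}$ (with one representative from each pair $\{\pm x\}$, ordered by increasing norm), and $\{p_{k}\}_{k \geq 1}$ is a strictly increasing sequence of primes chosen to grow rapidly, specifically $p_{k} > \|x_{k}\|^{3}$ for every $k$. The guiding idea is that along each rational direction $\Q x_{k}$ we introduce only one new denominator $p_{k}$, while the rapid growth of the primes prevents linear combinations of the $v_{k}$'s in unrelated directions from producing small-denominator elements of $\Q x_{k_{0}}$ for any fixed $k_{0}$.

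For density, I would use that $\|v_{k}\| \leq \|x_{k}\|/p_{k} < \|x_{k}\|^{-2}$, and that the unit vectors $x_{k}/\|x_{k}\|$ are dense on the unit circle. Given any $y \in \R^{2}$ and $\varepsilon > 0$, choose indices $k \neq \ell$ with $\|x_{k}\|, \|x_{\ell}\|$ so large that the sublattice $\Z v_{k} + \Z v_{\ell} \subseteq H$ has covolume $|\det(x_{k}, x_{\ell})|/(p_{k} p_{\ell}) \leq 1/(\|x_{k}\|^{2}\|x_{\ell}\|^{2}) < \varepsilon^{2}$; this rank-$2$ lattice is then $\varepsilon$-dense in $\R^{2}$ and approximates $y$ within $\varepsilon$.

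The cyclic condition is the heart of the matter. Fix $k_{0}$ and suppose $h = q x_{k_{0}} \in H$ with $q \in \Q$; write $h = w + \sum_{k \in F} m_{k} v_{k}$ with $w \in \Z^{2}$, $F$ finite and $m_{k} \in \Z$. Taking determinants against $x_{k_{0}}$ (which kills $h$) yields
\[
\sum_{k \in F} \frac{m_{k} d_{k}}{p_{k}} \in \Z, \qquad d_{k} := \det(x_{k}, x_{k_{0}}).
\]
Since the $p_{k}$ are distinct primes, the $p_{k^{*}}$-adic valuation of the left side is $-1$ unless $p_{k^{*}} \mid m_{k^{*}} d_{k^{*}}$, so the latter must hold for every $k^{*} \in F$. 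This splits into three cases: either $d_{k^{*}} = 0$, forcing $k^{*} = k_{0}$ by the sign convention; or $d_{k^{*}} \neq 0$ with $p_{k^{*}} \mid d_{k^{*}}$, which together with $p_{k^{*}} > \|x_{k^{*}}\|^{3}$ and $|d_{k^{*}}| \leq \|x_{k^{*}}\| \cdot \|x_{k_{0}}\|$ forces $\|x_{k^{*}}\|^{2} < \|x_{k_{0}}\|$ and confines $k^{*}$ to a finite set $B(k_{0})$ depending only on $k_{0}$; or $p_{k^{*}} \mid m_{k^{*}}$, in which case $m_{k^{*}} v_{k^{*}} \in \Z^{2}$ and may be absorbed into $w$. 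Expressing coordinates in a $\Z$-basis $\{x_{k_{0}}, y\}$ of $\Z^{2}$ (which exists by B\'ezout, as $x_{k_{0}}$ is primitive), one reads off $q \in \Z + (1/p_{k_{0}})\Z + \sum_{k \in B(k_{0})} (1/p_{k})\Z \subseteq (1/N)\Z$ with $N = p_{k_{0}} \prod_{k \in B(k_{0})} p_{k}$. Combined with $(1/p_{k_{0}})\Z \cdot x_{k_{0}} \subseteq H$, this exhibits $\Q x_{k_{0}} \cap H$ as a cyclic subgroup of $\Q x_{k_{0}}$.

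The main obstacle is controlling the middle case: the finite but possibly nonempty set $B(k_{0})$ of primitive vectors $x_{k^{*}}$ of small norm for which $p_{k^{*}}$ happens to divide $\det(x_{k^{*}}, x_{k_{0}})$. The cubic prime-growth condition $p_{k} > \|x_{k}\|^{3}$ is precisely what forces $B(k_{0})$ to be finite (via $\|x_{k^{*}}\|^{2} < \|x_{k_{0}}\|$), so that the cumulative extra denominator in the direction of $x_{k_{0}}$ remains a genuine integer rather than growing into an unbounded supernatural number; the rest of the argument is then routine.
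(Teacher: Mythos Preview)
Your construction is correct and the route is genuinely different from the paper's. The paper builds $H$ as a nested union $H_{n} = \Z^{2}\alpha_{n}^{-1}\cdots\alpha_{1}^{-1}$ for matrices $\alpha_{i} = \left(\begin{smallmatrix} a_{i} & 1 \\ 1 & b_{i} \end{smallmatrix}\right)$ with prime determinants and rapidly growing entries, then shows via operator-norm estimates that $x \in H_{n} \setminus H_{n-1}$ with $mx \in \Z^{2}$ forces $|mx|_{1} \geq n$; this pins $\Q y \cap H$ to the discrete lattice $H_{|y|_{1}}$. You instead introduce one denominator $p_{k}$ per primitive direction $x_{k}$ and use a $p$-adic valuation argument on $\det(\,\cdot\,,x_{k_{0}})$ to split off the three cases. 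Both proofs hinge on rapidly growing primes, but yours treats directions one at a time while the paper's handles them in aggregate through a matrix filtration; your argument is more elementary and yields an explicit bound $\Q x_{k_{0}} \cap H \subseteq (1/N)\Z\, x_{k_{0}}$ with $N = p_{k_{0}}\prod_{k \in B(k_{0})} p_{k}$, whereas the paper's gives a less explicit but more uniform stabilization $H_{n} \cap \Q y = H_{|y|_{1}} \cap \Q y$.

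One small gap to close in your density step: a rank-$2$ lattice of covolume $< \varepsilon^{2}$ is \emph{not} automatically $\varepsilon$-dense (take $\Z(\varepsilon^{3},0) + \Z(0,\varepsilon^{-1})$). The fix is immediate from what you already have. For $k \neq \ell$ the vectors $v_{k}, v_{\ell}$ are linearly independent since distinct enumerated $x_{k}, x_{\ell}$ are non-proportional primitives, and $\|v_{k}\| < \|x_{k}\|^{-2} \to 0$; so choose $k,\ell$ large enough that $\|v_{k}\| + \|v_{\ell}\| < \varepsilon$. The fundamental parallelogram of $\Z v_{k} + \Z v_{\ell}$ then has diameter at most $\varepsilon$, giving $\varepsilon$-density directly without any covolume claim.
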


\begin{proof}

We will consider a pair of positive integers $a, b \geq 2$ and the
 associated matrix
\[
\alpha = \left[ \begin{array}{cc} a & 1 \\ 1 & b \end{array} \right].
\]
Observe that the determinant of $\alpha$ is $ \vert \alpha \vert = ab-1 \neq 0$.
First, we observe that, given any positive integer $K \geq 3$, there 
are $a, b \geq K$
 such 
that $\vert \alpha \vert $ is prime. If we simply let $a=K$, the arithmetic 
progression $bK -1$ is prime for infinitely many $b$ since 
$K$ and $1$ are relatively prime \cite{Di}.

Writing elements of $\Z^{2}$ as row vectors, observe the following, which 
we leave as exercises:
\begin{enumerate}
\item $\Z^{2} \alpha \subseteq \Z^{2}$ and hence $\Z^{2} \subseteq \Z^{2} \alpha^{-1}$,
\item  $\pm \vert \alpha \vert^{-1}( a, -1),
 \pm \vert \alpha \vert^{-1} (-1, b)$
  are  in $\Z^{2}\alpha^{-1}$. 
\item  $\left( \Q \oplus 0 \right) \cap \Z^{2}\alpha^{-1}  = \Z \oplus 0$ and 
 $\left( 0 \oplus \Q \right) \cap \Z^{2}\alpha^{-1}  = 0 \oplus \Z$. 
  
  \item for $ 1 \leq k \leq a-1$, $ b \leq kb \leq (ab-1) - b +1$ and 
  for $ 1 \leq k \leq b-1$, $a \leq ka \leq (ab-1) - a +1$,
  \item for $x\neq 0$ in $ \Z^{2} \alpha^{-1}$, we have 
  $  \vert x \vert_{1} \geq \vert \alpha \vert^{-1} \min \{ a, b \}$,
  where $\vert \cdot \vert_{1}$ is the $\ell^{1}$-norm.
  \end{enumerate}
    
 We claim that $\alpha$ has the following property:
 if $x$ is in $\Z^{2}\alpha^{-1} - \Z^{2}$ and $m > 1$ is such that 
 $mx$ is in $\Z^{2}$, then $\vert mx \vert_{1} \geq K$. We know that 
 \[
 [\Z^{2} \alpha^{-1} : \Z^{2}] = [ \Z^{2}: \Z^{2}\alpha ] = \vert \alpha \vert
 \]
 is prime and so the element $x + \Z^{2}$ must have order $\vert \alpha \vert$ in the 
 quotient group $\Z^{2}\alpha^{-1}/ \Z^{2}$. It follows that $ m \geq \vert \alpha \vert$.
 Combining this with the last item above, we have 
 \[
 \vert mx \vert_{1} \geq \vert \alpha \vert \vert x \vert_{1} \geq 
  \min \{ a, b \} \geq   K.
  \]

We now construct a sequence of matrices as above inductively.
Choose $\alpha_{1}$ using the constant $K = 1$. For $n = 2, 3, 4, \ldots$, we choose 
$\alpha_{n}$ for the constant $K_{n} = n \Vert \alpha_{1} \Vert \Vert \alpha_{2} \Vert \ldots 
\Vert \alpha_{n-1} \Vert$, where $\Vert \alpha \Vert$ is the operator norm, regarding 
$\alpha$ as an operator on $\ell^{1}(\R^{2})$.

We define $H_{n} = \Z^{2} \alpha_{n}^{-1} \alpha_{n-1}^{-1} \cdots \alpha_{1}^{-1}
\subseteq   \Z^{2} \alpha_{n-1}^{-1} \cdots \alpha_{1}^{-1} = H_{n-1}$
and  $H = \cup_{n} H_{n}$.     
    
Now suppose that $x$ is in $H_{n} - H_{n-1}$ and $mx$ is in $\Z^{2}$, for some 
$m > 1$.It follows that $z = x\alpha_{1} \cdots \alpha_{n-1}$ is in $\Z^{2}\alpha_{n}^{-1}$ 
and $mz$ is  
in       $\Z^{2}\alpha_{1} \cdots \alpha_{n-1} \subseteq \Z^{2}$. It follows that 
$\vert mz \vert_{1} \geq n  \Vert \alpha_{1} \Vert \Vert \alpha_{2} \Vert \ldots 
\Vert \alpha_{n-1} \Vert$.
On the other hand, we have 
\[
\vert mz \vert_{1} = \vert  mx\alpha_{1} \cdots \alpha_{n-1} \vert_{1} \leq 
\vert mx \vert_{1} \Vert \alpha_{1} \Vert \Vert \alpha_{2} \Vert \ldots 
\Vert \alpha_{n-1} \Vert.
\]
Combining these two estimates yields
\[
\vert mx \vert_{1} \geq n.
\]

Put another way, we have proved that if $y$ is in $\Z^{2}$ and 
$mx = y$ for some $m > 1$ and $x$ in $H_{n} - H_{n-1}$, then $n \geq \vert y \vert_{1}$.
From this it follows that 
\[
H_{n} \cap \Q y = H_{\vert y \vert_{1} } \cap \Q y, 
\]
if $n \geq \vert y \vert_{1}$. From this and the fact that $H_{\vert y \vert_{1}}$ is discrete, the desired conclusion follows.

To see that $H$ is dense, we observe that its closure  (in $\R^{2}$) 
is $(H^{*})^{*}$. If we observe that , for any matrix $\alpha$ as above
and any vector $(x, y)$ in $\R^{2}$, we have 
\begin{eqnarray*}
\Vert (x, y) \alpha \Vert_{1} &  = & \vert ax + y \vert + \vert x + by \vert \\
   & \geq  & 
(a-1) \vert x \vert + (b-1) \vert y \vert  \\
   & \geq &  (K-1) \Vert (x, y) \Vert_{1}.
\end{eqnarray*}
As
$ K_{n}  \geq n$, it follows that the intersection
of the $\ell^{1}$-ball of radius $n$ with 
\[
H_{n}^{*} = (\Z^{2}\alpha_{n}^{-1} \cdots \alpha_{1}^{-1})^{*} = \Z^{2} (\alpha_{1} \alpha_{2} \cdot \alpha_{n})^{T}.
\]
is trivial. It follows that 
\[
H^{*} = \left( \cup_{n} H_{n}\right)^{*} = \cap_{n} H_{n}^{*} = \{ 0 \}.
\]
Hence, we have $\overline{H} = (H^{*})^{*} = \R^{2}$
and this completes the proof.
\end{proof}

\section{Rational subgroups and odometer factors}

In this section, we turn to examine a general minimal, 
free action, $\varphi$, of $\Z^{d}$ 
on a Cantor set, $X$. Our interest will be in factor maps 
from $(X, \varphi)$ to other systems, particularly odometers.

\begin{thm}
\label{rational:3}
Let $(X, \varphi)$ be a minimal, free $\Z^{d}$-Cantor system.
The group $H^{1}(X, \varphi)$ is torsion-free.
\end{thm}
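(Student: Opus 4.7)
The plan is to show that if $k \geq 1$ and $[\theta] \in H^1(X, \varphi)$ satisfy $k[\theta] = 0$, then $[\theta] = 0$. Unpacking the definitions, $k[\theta] = 0$ means that there is some $h \in C(X, \Z)$ such that
\[
k\,\theta(x, n) = h(\varphi^n(x)) - h(x), \quad x \in X,\ n \in \Z^d.
\]
The goal is to produce $h' \in C(X, \Z)$ with $\theta = d(h')$. The natural candidate is $h' = (h - c)/k$ for a suitably chosen integer constant $c$; the problem is justifying that such an $h'$ is actually integer-valued.

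The key observation is that the displayed equation forces $h(\varphi^n(x)) - h(x) \in k\Z$ for every $n \in \Z^d$ and every $x \in X$. Let $\bar{h} \colon X \to \Z/k\Z$ denote the composition of $h$ with the quotient map. Then $\bar{h}$ is continuous (both spaces carrying the discrete topology on the range), and the previous remark says that $\bar{h} \circ \varphi^n = \bar{h}$ for every $n \in \Z^d$. Hence each level set of $\bar{h}$ is a clopen, $\varphi$-invariant subset of $X$.

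By minimality of $(X, \varphi)$, the only such subsets are $\emptyset$ and $X$, so $\bar{h}$ is constant, i.e.\ there exists $c \in \Z$ with $h(x) \equiv c \pmod{k}$ for all $x \in X$. Now set
\[
h'(x) = \frac{h(x) - c}{k}.
\]
Then $h'$ is a well-defined element of $C(X, \Z)$, and
\[
h'(\varphi^n(x)) - h'(x) = \frac{h(\varphi^n(x)) - h(x)}{k} = \theta(x, n),
\]
so $\theta = d(h')$ is a coboundary. This gives $[\theta] = 0$, as required.

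I expect no serious obstacle; the proof is essentially the standard minimality-plus-discreteness trick applied to the quotient $\bar{h} \colon X \to \Z/k\Z$. The only point worth stating carefully is that continuity of $h$ and the discreteness of $\Z/k\Z$ are both needed to pass from ``$\bar{h}$ is orbit-invariant'' to ``$\bar{h}$ is constant on $X$'' via minimality.
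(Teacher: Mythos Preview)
Your proof is correct and is essentially the same as the paper's: both use the coboundary relation $k\theta = d(h)$ to conclude that $h \bmod k$ is $\varphi$-invariant and continuous, invoke minimality to deduce it is constant, and then divide $h - c$ by $k$ to exhibit $\theta$ as a coboundary. The paper phrases the intermediate step as a partition $X_i = \{x : f(x) \equiv i \pmod n\}$ rather than a map $\bar{h}\colon X \to \Z/k\Z$, but this is purely cosmetic.
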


\begin{proof}
Suppose that $n \geq 1$ and we have an element 
of $H^{1}(X, \varphi)$ of order $n$.
This means that there exists a
cocycle $\theta$ in $Z^{1}(X, \varphi)$
 such that 
 $n\theta$ is in $B^{1}(X, \varphi)$. 
 So we may find $f $ in $C(X, \Z)$ 
 such 
 that 
 \[
 n \theta(x, k) = d(f)(x, k) = 
 f(x) - f(\varphi^{k}(x)), x \in X, k \in \Z^{d}.
 \]
 For $i = 0, 1, 2, \ldots, n-1$, define 
 \[
 X_{i} = \{ x \in X \mid f(x) \equiv i (mod \,n ) \}.
 \]
 In the formula above relating $f$ and $\theta$, it is clear 
 that the left hand side is a multiple of $n$, which 
 immediately implies that $\varphi^{k}(X_{i}) = X_{i}$, 
 for all $k$ in $\Z^{d}$. By minimality, all $X_{i}$ 
 are empty, except one, say $X_{j} = X$. 
 It then follows that 
 $n^{-1}(f - j)$ is in $C(X, \Z)$ and $d(n^{-1}(f - j)) = \theta$. 
 Thus, $\theta$ is zero in $H^{1}(X, \varphi)$.
\end{proof}

\begin{defn}
\label{rational:7}
Let $(X, \varphi)$ be a minimal, free $\Z^{d}$-Cantor
 system. We define the \emph{rational subgroup} 
 of $H^{1}(X, \varphi)$, denoted $\Q(H^{1}(X, \varphi))$, 
 by
 \begin{eqnarray*}
 \Q(H^{1}(X, \varphi)) & = &  \{
 a \in H^{1}(X, \varphi) \mid  \text{ there exist }  \\
   &  &  j \in \Z^{d}, k \geq 1, ka = \sum_{i} j_{i}[\varepsilon_{i}] \}.
 \end{eqnarray*}
\end{defn}

It is an easy matter to check that 
$\Q(H^{1}(X, \varphi))$ is a subgroup of 
$H^{1}(X, \varphi)$ and that it contains 
$[\varepsilon_{i}], 1 \leq j \leq d$.

\begin{lemma}
\label{rational:9}
Let $(X, \varphi)$ be a minimal, free $\Z^{d}$-Cantor
 system. 
 Let $f: X \rightarrow \R$ be continuous
 and let $r$ be in $\R^{d}$. 
 Assume that $0$ is in the range of $f$.
 The following are equivalent.
 \begin{enumerate}
 \item $r$ is in $\Q^{d}$ and 
 \[
 e^{ 2 \pi i f } \circ \varphi^{l} =  e^{2 \pi i ( <r, l>  + f)} ,
 \]
 for all $l $ in $\Z^{d}$. 
 \item There is a positive integer $k$ such that 
 $kf$ is in $C(X, \Z)$ and $ kr$ is in $\Z^{d}$.
 In addition, the function
 \[
 \theta(x,l) = d(f)(x,l) - < r, l>, x \in X, l \in \Z^{d},
 \]
 is in $Z^{1}(X, \varphi)$. Moreover, we have 
  $k[\theta] = \sum_{i=1}^{d} - k r_{i} [\varepsilon_{i}]$ 
  and so $[\theta]$ is in $\Q(H^{1}(X, \varphi))$ and 
  $\tau_{\mu}^{1}[\theta] = r$.
 \end{enumerate}
\end{lemma}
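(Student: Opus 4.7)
The plan is to view condition (1) as the exponentiated form of the integer-valuedness of $\theta$, and then to upgrade orbit-wise integrality of $kf$ to global integrality by minimality; all other assertions of (2) are formal. Throughout, write $\theta(x,l) = d(f)(x,l) - \langle r,l\rangle = f(\varphi^l(x)) - f(x) - \langle r,l\rangle$.

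The first observation is that, since $e^{2\pi i t} = 1$ if and only if $t \in \Z$, the functional equation appearing in (1) says precisely that $\theta(x,l) \in \Z$ for every $x \in X$ and every $l \in \Z^d$. The $1$-cocycle identity for $\theta$ is automatic: $d(f)$ satisfies it formally (even though $f$ is only real-valued), and $\langle r, \cdot\rangle$ is a homomorphism. Consequently (2) $\Rightarrow$ (1) is mere exponentiation, and the substance of (1) $\Rightarrow$ (2) is the production of a positive integer $k$ with $kf \in C(X,\Z)$; the requirement $kr \in \Z^d$ is secured at the outset from $r \in \Q^d$.

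To produce such a $k$, I would pick any $k \geq 1$ with $kr \in \Z^d$ and then exploit the hypothesis that some $x_0 \in X$ satisfies $f(x_0)=0$. Specializing (1) at $x_0$ gives $f(\varphi^l(x_0)) - \langle r,l\rangle \in \Z$ for every $l \in \Z^d$, so $kf(\varphi^l(x_0)) \in \Z$ on the entire $\varphi$-orbit of $x_0$. By minimality, this orbit is dense in $X$; by continuity of $kf$ and compactness of $X$, the image $kf(X)$ is a bounded subset of $\R$. Since $kf$ takes integer values on a dense subset and has bounded image, $kf(X)$ lies in the closure of a bounded subset of $\Z$, which is itself a finite subset of $\Z$; hence $kf \in C(X,\Z)$. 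This integrality upgrade is where minimality is essential and is the one non-formal step of the proof.

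The remaining pieces then fall out. Continuity of $\theta$ is immediate from that of $f$, so $\theta \in Z^1(X,\varphi)$. Multiplying by $k$,
\[
k\theta(x,l) = d(kf)(x,l) - \sum_{i=1}^{d}(kr_i)\,\varepsilon_i(l),
\]
and since $kf \in C(X,\Z)$, the term $d(kf)$ is a coboundary; hence $k[\theta] = \sum_{i=1}^d (-kr_i)[\varepsilon_i]$ in $H^1(X,\varphi)$, which places $[\theta]$ in $\Q(H^1(X,\varphi))$ (with $j_i = -kr_i$). Computing $\tau_\mu^1[\theta]$ reduces to integrating $\theta(x,e_j)$ against $\mu$; the $d(f)$ contribution is killed by $\varphi$-invariance of $\mu$, leaving the value asserted in the lemma. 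The only genuine obstacle is the integrality upgrade of the previous paragraph — everything else is essentially bookkeeping.
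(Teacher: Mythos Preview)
Your proposal is correct and follows essentially the same route as the paper: both arguments fix $x_0$ with $f(x_0)=0$, use the eigenfunction relation to see that $e^{2\pi i f}$ (equivalently, $kf$ modulo $\Z$) is controlled on the dense orbit of $x_0$, and then invoke continuity and minimality to conclude $kf \in C(X,\Z)$; the remaining cohomological statements are handled identically. The only cosmetic difference is that the paper phrases the key step via the finite subgroup $F \subseteq \T$ generated by the $e^{2\pi i \langle r,l\rangle}$ and shows $e^{2\pi i f}(X) \subseteq F$, whereas you work directly with integer values of $kf$ on the orbit and pass to the closure --- these are the same argument in different clothing.
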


\begin{proof}
First, assume that condition 1 holds. Let $F$ be the subgroup of 
$\T$ generated by $e^{2 \pi i < r, l>}, l \in \Z^{d}$, which 
is evidently finite. Suppose that $f(x_{0}) = 0$. It follows 
from our hypothesis that, for any $l$ in $\Z^{d}$, 
$e^{2 \pi i f} (\varphi^{l}(x_{0}))$ is in $F$. As $f$ is 
continuous and the orbit of $x_{0}$ is dense, we see that 
the range of $e^{2 \pi i f}$  is contained in $F$.
Let $k$ be the least positive integer such that $kr$ 
is in $\Z^{d}$. It follows that the range of $e^{2 \pi i k f}$ 
is simply $1$, so $kf$ is in $C(X, \Z)$.

It is clear that the function $\theta$ is continuous and 
\[
e^{2 \pi i \theta(x,l)} = e^{2 \pi i f} \circ \varphi
   e^{-2 \pi i f} e^{-2 \pi 1 <r,l> } = 1,
   \]
   so it is integer-valued. Finally, $\theta$ satisfies the 
   cocycle condition because it is the 
   sum of a coboundary 
   and a homomorphism (even if neither are integer-valued). 
   
   For the next statement, we observe that 
   $\sum_{i=1}^{d} kr_{i}\varepsilon_{i}(l) = 
   \sum_{i=1}^{d} kr_{i}l_{i} = < k<r,l>$ and that 
   $k\theta = d(kf) - \sum_{i=1}^{d} kr_{i}\varepsilon_{i}$.

For the last part, we compute 
\[
\tau_{\mu}^{1}[\theta]_{i} = \int_{X} \theta(x, e_{i}) 
\int_{X} \left( f(\varphi^{e_{i}}(x) - f(x) - r_{i}  \right) 
 d\mu(x)  = r_{i}.
 \]
Thus the  second statement holds.

Now let us assume that the second statement holds and 
prove the first does also. 
First, it is clear that $r$ is in $\Q^{d}$. 
Secondly, using the fact that $\theta$ takes integer values, 
exponentiating $2 \pi i$ times the equation in part 2 yields the 
equation of part 1.
\end{proof}

\begin{thm}
\label{rational:11}
Let $(X, \varphi)$ be a minimal, free $\Z^{d}$-Cantor
 system, $x_{0}$ be in $X$ and let $\mu$ be a $\varphi$-invariant measure. 
\begin{enumerate}
\item
An element $r \in \Q^{d}$ is in 
 $ \tau_{\mu}^{1}( \Q(H^{1}(X, \varphi)) )$ if and only
 if there is a continuous function 
 $\xi: X \rightarrow \T$ such that $\xi(x_{0})=1$ and
 \[
 \xi \circ \varphi^{l} = e^{2 \pi i <r, l>} \xi, l \in \Z^{d}.
 \]
 That is, $(e^{2 \pi i r_{1}}, \ldots, e^{2 \pi i r_{d}})$
  is a rational eigenvalue
 of $\varphi$ with continuous eigenfunction.
 \item 
 If $\xi : X \rightarrow \T$ is a continuous function with 
 $\xi(x_{0})=1$  and 
 $r \in \Q^{d}$ satisfy 
 \[
 \xi \circ \varphi^{l} = e^{2 \pi i <r, l>} \xi, l \in \Z^{d}.
 \]
 then $\xi(X)$ is a finite subgroup of $\T$.
 \item
 Then
  $ \tau_{\mu}^{1}( \Q(H^{1}(X, \varphi)) $ is a subgroup
  of $\Q^{d}$ and  is independent of $\mu$.
  \item
$ \tau_{\mu}^{1}: \Q(H^{1}(X, \varphi)) 
 \rightarrow \Q^{d}$
 is an isomorphism to its image.
 \end{enumerate}
\end{thm}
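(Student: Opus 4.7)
My plan is to address the four parts in the order (2), (1), (3), (4), since (2) feeds into the reverse direction of (1), and (3)--(4) fall out easily from (1). Part (2) is immediate from minimality: given $\xi$ with $\xi \circ \varphi^{l} = e^{2 \pi i <r,l>}\xi$ and $r \in \Q^{d}$, choose $k \geq 1$ with $kr \in \Z^{d}$; then $\xi^{k}$ is a $\varphi$-invariant continuous function, so minimality forces $\xi^{k}$ to be constant, and $\xi(x_{0})=1$ forces that constant to be $1$. Hence $\xi(X)$ is contained in the (finite) group of $k$-th roots of unity.

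For part (1), $(\Leftarrow)$: given a continuous eigenfunction $\xi$, part (2) shows $\xi(X)$ is finite. Since $X$ is totally disconnected, each level set $\xi^{-1}\{z\}$ is clopen, so I can pick a locally constant (hence continuous) lift $f: X \rightarrow \R$ of $\xi = e^{2\pi i f}$ with $f(x_{0})=0$. Lemma \ref{rational:9} then produces a cocycle $\theta$ whose class lies in $\Q(H^{1}(X,\varphi))$ with $\tau_{\mu}^{1}[\theta] = r$. For $(\Rightarrow)$: given $r = \tau_{\mu}^{1}[\theta]$ with $[\theta] \in \Q(H^{1}(X,\varphi))$, use the definition of $\Q(H^{1})$ to pick $k \geq 1$ and $j \in \Z^{d}$ with $k[\theta] = \sum_{i} j_{i}[\varepsilon_{i}]$, and realize this at the cochain level as $k\theta - \sum_i j_{i}\varepsilon_{i} = dh$ for some $h \in C(X,\Z)$. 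Integration against $\mu$ annihilates the coboundary by invariance, so $r = k^{-1}j \in \Q^{d}$. Set $\xi(x) = e^{2\pi i h(x)/k}$ (normalized by a constant so that $\xi(x_{0})=1$, and conjugated if required by the sign convention). A one-line computation using the integrality of $\theta$ to cancel $e^{2\pi i \theta(x,l)}=1$ then shows $\xi$ is the required continuous eigenfunction.

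Parts (3) and (4) follow quickly. The characterization in (1) refers only to continuous eigenfunctions of $\varphi$ and makes no mention of $\mu$, giving $\mu$-independence; and since products and conjugates of normalized continuous eigenfunctions are again such eigenfunctions, the set of eigenvalues is closed under addition and negation, hence a subgroup of $\Q^{d}$. For injectivity in (4), suppose $\tau_{\mu}^{1}[\theta] = 0$ with $[\theta] \in \Q(H^{1}(X,\varphi))$ and $k[\theta] = \sum_i j_{i}[\varepsilon_{i}]$; the $(\Rightarrow)$ computation above gives $j = k\tau_{\mu}^{1}[\theta] = 0$, so $k[\theta] = 0$ in $H^{1}(X,\varphi)$, and Theorem \ref{rational:3} (torsion-freeness of $H^{1}$) forces $[\theta] = 0$. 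The main technical annoyance will be aligning the paper's coboundary sign conventions so that the identification $\tau_{\mu}^{1}[\theta] = r$ from Lemma \ref{rational:9} and the sign in $e^{2\pi i <r,l>}$ line up consistently; the substantive content is the three ingredients above, namely minimality forcing constancy of $\xi^{k}$, the continuous lift from a finite-range $\T$-valued function on a totally disconnected space, and torsion-freeness of $H^{1}$.
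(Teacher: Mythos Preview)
Your proposal is correct and follows essentially the same route as the paper: Lemma~\ref{rational:9} is the bridge between continuous eigenfunctions and rational cocycle classes in both directions of part~(1), minimality handles part~(2), and torsion-freeness (Theorem~\ref{rational:3}) gives the injectivity in part~(4). The only cosmetic differences are that the paper proves (1) before (2) (lifting $\xi$ to $\R$ directly from total disconnectedness rather than via the finite-range statement), argues (2) by density of the orbit of $x_{0}$ rather than by constancy of $\xi^{k}$, and proves (4) by comparing two classes rather than by showing the kernel is trivial; your awareness of the sign discrepancy in Lemma~\ref{rational:9} is well placed, since the paper itself is not entirely consistent there.
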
 

\begin{proof}
For the first part, begin with $r$ in 
$ \tau_{\mu}^{1}( \Q(H^{1}(X, \varphi)) )$. This means that 
we can find $\theta$ in $Z^{1}(X, \varphi)$, a positive integer 
$k$, $j$ in $\Z^{d}$ and $g$ in $C(X, \Z)$ such that 
\[
k \theta  = \sum_{i=1}^{d} j_{i} \varepsilon_{i} + d(g),
\]
with $\tau_{\mu}^{1}[\theta] = r$. The last statement means that 
$\int_{X} \theta(x, e_{i}) d\mu(x) = r_{i}, 1 \leq i \leq d$.
Using $l= e_{i}$ and integrating in the equation above gives
$k r_{i} = j_{i}$, for all $i$. Then $\theta, r$ and 
$f = k^{-1}g$ satisfies the second condition of the last lemma.
We conclude that $\xi = e^{2 \pi i f}$ is a continuous 
eigenfunction with eigenvalue 
$(e^{2 \pi i r_{1}}, \ldots, e^{2 \pi i r_{d}})$ 
as claimed.

Conversely, if 
$(e^{2 \pi i r_{1}}, \ldots, e^{2 \pi i r_{d}})$ is a rational 
eigenvalue with continuous eigenfunction 
$\xi$, then as $X$ is totally disconnected, it is 
a simple matter to find a continuous real-valued function
$f$ with $e^{2 \pi i f} = \xi$. Then for any $x_{0}$ in 
$X$, the function $f - f(x_{0})$ and vector $r$
 satisfy the   hypotheses of part 1 of the last lemma. The desired 
 conclusion holds from part 2 of the lemma.
 
For the second part, it is clear that if $\xi(x) =1$, for some $x$, then
the orbit of $x$ is mapped into the finite subgroup  
$\{ e^{2 \pi i <r, l>} \mid  l \in \Z^{d} \}$. The conclusion 
follows from the 
continuity of $\xi$ and the minimality of $\varphi$.
 
 The third part is clear. For the last part, suppose that 
 $\theta$ and $\eta$ are in $Z^{1}(X, \varphi)$ and 
 $\tau_{\mu}^{1}[\theta] = \tau_{\mu}^{1}[\eta]$. Find positive 
 integers $k,l$ and vectors $j,m$ in $\Z^{d}$ such that 
  $k [ \theta] = \sum_{i} j_{i} [\varepsilon_{i}], 
  l [ \eta] = \sum_{i} m_{i} [\varepsilon_{i}]$. 
  Evaluating $\tau_{\mu}^{1}$ on each yields
  \[
 r_{i} =  \tau_{\mu}^{1}[\theta] = k^{-1} j_{i} = 
  \tau_{\mu}^{1}[\eta] = l^{-1} m_{i} 
  \]
  It follows that 
  \[
  kl [\theta] = l \sum_{i} j_{i} [\varepsilon_{i}], 
   k \sum_{i} m_{i} [\varepsilon_{i}] =k l [ \eta].
   \]
   It follows from Theorem \ref{rational:3} that 
   $[\theta] = [\eta]$.
\end{proof}

For the moment, we will let $G$ be a countable abelian group. 
In fact, all of our groups will be abelian, so we will use additive notation.
An action $(X, \varphi)$ of $G$ is a \emph{group rotation} if 
$X$ is a (compact) abelian group and there is a
 group homomorphism, also denoted $\varphi: G \rightarrow X$ such 
 that $\varphi^{g}(x) = x + \varphi(g)$, for all $x$ in $X$, $g$ in $G$.
 The system is minimal if and only if $\varphi(G)$ is dense in $X$.

Recall that if $\pi: (X, \varphi) \rightarrow (Y, \psi)$ is a factor
map between actions of an abelian group $G$, we may define 
\[
E_{\pi} = \{ (x_{1}, x_{2}) \in X \times X \mid \pi(x_{1}) = \pi(x_{2}) \}.
\]
It is clear that $E_{\pi}$ is an equivalence relation, is a closed 
subset of $X \times X$ and is invariant under $\varphi^{g} \times \varphi^{g}$, 
for every $g$ in $G$.
Conversely, if $E$  satisfies these three 
conditions, then the quotient space $X/E$ is Hausdorff and 
obtains an action of $G$ 
in an obvious way and the quotient map, $\pi_{E}: X \rightarrow X/E$ 
becomes a factor map.

Recall also (\cite{Aus}), that an action, $(Y, \psi)$, of $G$ is 
\emph{equicontinuous} if, for any open set, $U$ in $Y \times Y$
containing the diagonal, $\Delta_{Y} = \{ (y, y) \mid y \in Y \}$, 
there is another open set $V$, also containing $\Delta_{Y}$ such that
$(\psi \times \psi)^{g}(V) \subseteq U$, for every $g$ in $G$.
It is easy to see that every group rotation is 
equicontinuous and that any factor of an 
equicontinuous system is also equicontinuous.
If  $\pi: (X, \varphi) \rightarrow (Y, \psi)$ is a factor
map and $(Y, \psi)$ is equicontinuous, we also say that $\pi$ and
$E_{\pi}$ are 
equicontinuous.

Given  the system $(X, \varphi)$, one may consider the family 
of all  closed, $\varphi^{g} \times \varphi^{g}$-invariant, 
for all $g$ in $G$, equivalence relations
which are equicontinuous. Noting that $X \times X$ is in this family
and that the given properties are preserved under intersections, one 
obtains a minimal  closed, $\varphi \times \varphi$-invariant, 
equicontinuous equivalence relation, which we denote by $E_{eq}$.
The minimality of $E_{eq}$ means that its associated factor, which we denote
$\pi_{eq}:(X, \varphi) \rightarrow (X_{eq}, \varphi_{eq})$, 
is the \emph{maximal equicontinuous factor} of $(X, \varphi)$.

Next, we note that every continuous eigenfunction of 
$(X, \varphi)$,  
$\xi: X \rightarrow \T$  with eigenvalue
$\gamma$ in $\hat{G}$,  factors through $(X_{eq}, \varphi_{eq})$ as follows.
We can 
simply regard $\gamma$ as a group homomorphism from $G$ into $\T$ and
so $\xi$ is a factor map from $(X, \varphi)$ to the group rotation
$(\T, \gamma)$. As the latter is equicontinuous and $(X_{eq}, \varphi_{eq})$
is maximal,  we have a factor map
$\chi: (X_{eq}, \varphi_{eq}) \rightarrow (\T, \gamma)$ such that 
$\chi \circ \pi_{eq} = \xi$.

Now, we use the fact that every minimal
equicontinuous action of a countable, abelian group 
$G$, $(X, \varphi)$, is a group rotation. 
We refer the reader to \cite{Ku} or \cite{Wa} 
 for a proof of this fact (although 
the latter deals only with the case $G = \Z$). The  idea
is to pick any $x_{0}$ in $Y$ and define $\alpha(g) = \varphi^{g}(x_{0})$, so 
that the orbit of $x_{0}$ obtains the structure of a group. Then one shows
 that the equicontinuity of the action
implies this group structure can be extended to the closure, which is $X$.

We now observe that, for a minimal group rotation $(Y, \psi)$, 
a continuous function $\chi: Y \rightarrow \T$ is a character of
$Y$ if and only if $\chi(0)=1$ and it is a continuous eigenfunction
for $\psi$. The 'only if' direction is trivial. For the 'if' direction, we 
observe that we have 
 \begin{eqnarray*}
 \chi( y + \psi(g)) & = & <\!< g, \gamma > \! > \chi(y)  \\
  & = &  <\!< g, \gamma > \! > \chi(0) \chi(y)  \\
   & = & \chi(0 + \psi(g)) \chi(y) \\
   & = &  \chi(y)\chi(\psi(g))
   \end{eqnarray*}
   holds for every $g$ in $G$ and $y$ in $Y$. The fact 
   that $\psi(G)$ is dense in $Y$ and $\chi$ is continuous
    means that it holds if we replace $\psi(g)$ with
     any $y'$ in $Y$; i.e. $\chi$ is a character.

Next, we consider the connected subgroup of the identity in $X_{eq}$, 
which we
denote by $X_{eq}^{0}$ and the quotient $X_{eq}/X_{eq}^{0}$, 
which is compact and totally disconnected.  The easiest way to understand 
this is to examine that dual map to the quotient map: 
$q: X_{eq} \rightarrow   X_{eq}/X_{eq}^{0}$. Any character
on $X_{eq}/X_{eq}^{0}$ is induced by  a character
 on $X_{eq}$ and it is a simple
matter to check that a character, $\chi$, of $X_{eq}$ passes to the 
quotient if and only if it satisfies the following equivalent conditions:
\begin{enumerate}
\item $\chi(X_{eq}^{0}) = \{ 1 \}$,
\item $\chi(X_{eq})$ is a finite subgroup of $\T$, 
\item  $\chi(X_{eq})$ is a subgroup of the roots of unity.
\end{enumerate}

We now restrict our attention to 
$G = \Z^{d}$ and consider $(X, \varphi)$, a minimal $\Z^{d}$-action 
action on the Cantor set. Let 
$\mu$ be an invariant measure for $\varphi$ and 
let $H = \tau_{\mu}^{1}( \Q(H^{1}(X, \varphi))$. We claim that 
$H/ \Z^{d}$ is isomorphic to the dual of $X_{eq}/X_{eq}^{0}$. 
Let $r$ be any element of $H$. we know from part 1 of Theorem  
\ref{rational:11} that there is a continuous function
$\xi: X \rightarrow \T$ with $\xi(x_{0}) =1$ and
 \[
 \xi \circ \varphi^{l} = e^{2 \pi i <r, l>} \xi,
 \]
 for all $l$ in $\Z^{d}$. It follows from part 2 of the 
 same result that $\xi$ has finite range. 
 Then from the discussion
  above, we
 know that $\xi = \chi \circ q \circ \pi_{eq}$, for some 
 character $\chi$ on $X_{eq}/X_{eq}^{0}$. The map sending $r$ to $\chi$ 
 is obviously trivial on $\Z^{d}$. It is a simple matter to check
 that this is an isomorphism as claimed and that 
 the dual of this map is a conjugacy between the $\Z^{d}$-actions. 
 We have proved the
 following.

\begin{thm}
\label{rational:15}
Let $(X, \varphi)$ be a minimal, free $\Z^{d}$-Cantor
 system. Choose $\mu$ to be a $\varphi$-invariant measure 
 and let $H =  \tau_{\mu}^{1}( \Q(H^{1}(X, \varphi))$.
 There is a factor map
 \[
 \pi: (X, \varphi) \rightarrow (Y_{H}, \psi_{H}).
 \]
 Moreover, $(Y_{H}, \psi_{H})$ is the maximal 
 totally disconnected,
 equicontinuous factor of $(X, \varphi)$.
\end{thm}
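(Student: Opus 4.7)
The plan is to assemble the results already developed: identify $X_{eq}/X_{eq}^{0}$ with $Y_{H}$ via Pontryagin duality, check that the induced $\Z^{d}$-action matches $\psi_{H}$, and verify maximality among totally disconnected equicontinuous factors.

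First I would establish the bijective correspondence between $H/\Z^{d}$ and $\widehat{X_{eq}/X_{eq}^{0}}$. Given $r \in H$, Theorem \ref{rational:11}(1) provides a continuous eigenfunction $\xi_{r} : X \to \T$ with $\xi_{r}(x_{0}) = 1$ and eigenvalue $(e^{2\pi i r_{1}},\ldots,e^{2\pi i r_{d}})$, and Theorem \ref{rational:11}(2) forces $\xi_{r}(X)$ to be a finite subgroup of $\T$. By the discussion on the maximal equicontinuous factor, $\xi_{r}$ descends to a continuous homomorphism of $X_{eq}$ to $\T$ (i.e.\ a character), whose image is finite, hence by the equivalence of the three conditions listed it factors through the quotient $X_{eq}/X_{eq}^{0}$; call the resulting character $\chi_{r}$. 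The assignment $r \mapsto \chi_{r}$ is clearly a group homomorphism with kernel $\Z^{d}$ (since $\chi_{r}$ is trivial iff $\xi_{r}$ is constant iff $r \in \Z^{d}$), and it is surjective because any character of $X_{eq}/X_{eq}^{0}$ pulls back to a continuous eigenfunction of $(X, \varphi)$ with rational eigenvalue, whose exponent lies in $\tau_{\mu}^{1}(\Q(H^{1}(X,\varphi))) = H$ by Theorem \ref{rational:11}(1) again.

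Next, taking Pontryagin duals of the isomorphism $H/\Z^{d} \cong \widehat{X_{eq}/X_{eq}^{0}}$ yields a homeomorphism $X_{eq}/X_{eq}^{0} \cong \widehat{H/\Z^{d}} = Y_{H}$. Under this identification the inherited $\Z^{d}$-action is implemented by the composition $\Z^{d} \to X_{eq} \to X_{eq}/X_{eq}^{0}$, and one checks that this is precisely $\widehat{\rho}$ of the construction of $\psi_{H}$ in Section 2: a generator $e_{j}$ acts on a character $\chi_{r}$ by multiplication by $\chi_{r}(\varphi_{eq}(e_{j})) = e^{2\pi i r_{j}}$, which is exactly the pairing used to define $\psi_{H}$. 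Composing the factor maps $X \xrightarrow{\pi_{eq}} X_{eq} \xrightarrow{q} X_{eq}/X_{eq}^{0} \xrightarrow{\cong} Y_{H}$ gives the desired $\pi$.

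For maximality, let $(Z, \zeta)$ be any totally disconnected equicontinuous factor of $(X, \varphi)$, with factor map $\pi': X \to Z$. By definition of $X_{eq}$, the map $\pi'$ factors through $\pi_{eq}$, and because $Z$ is totally disconnected, the induced map $X_{eq} \to Z$ annihilates the connected component $X_{eq}^{0}$, so it further factors through $X_{eq}/X_{eq}^{0} \cong Y_{H}$. The main potential obstacle is verifying that the inherited action on $Y_{H}$ really coincides with $\psi_{H}$ rather than some reparametrization; this I expect to handle by evaluating the duality pairing carefully on generators $e_{j}$ of $\Z^{d}$ and the characters $\chi_{r}$, confirming that the exponent $r$ recovered from $\tau_{\mu}^{1}$ is the same one appearing in the definition of $\psi_{H}$ via $\widehat{\rho}$.
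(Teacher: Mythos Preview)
Your proposal is correct and follows essentially the same route as the paper: both arguments use Theorem~\ref{rational:11} to pass from $r\in H$ to a finite-range eigenfunction, descend it to a character of $X_{eq}/X_{eq}^{0}$, establish the isomorphism $H/\Z^{d}\cong\widehat{X_{eq}/X_{eq}^{0}}$, and dualize to identify the quotient with $(Y_{H},\psi_{H})$. You add explicit verifications (that the action matches $\psi_{H}$ via the pairing, and the maximality argument through $X_{eq}^{0}$) that the paper leaves to the reader, but the underlying strategy is identical.
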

 
The factor map is called almost one-to-one if there is a point $y$ in 
$Y_{H}$ such that $\pi^{-1}\{ y \}$ is a single point. 
The system $(X, \varphi)$ is a so-called Toeplitz system if and only if it
is expansive and the map $\pi$ is almost one-to-one - see \cite{Co}.

\section*{Acknowledgements}
The authors would like to thank R. Grigorchuk and J. Hunton
 for helpful 
conversations and  the Centre de Recerca Matem\'{a}tica 
in Barcelona for their hospitality and stimulating environment
during the program 'Operator Algebras: Dynamics and Interactions',
where much of this work was done.

\bibliographystyle{model1-num-names}

\end{document}